\documentclass[leqno]{article}
\usepackage{blindtext}
\usepackage{fullpage}
\usepackage{amsmath}
\usepackage{amsthm}
\usepackage{amsfonts}
\usepackage{amssymb}
\usepackage{amssymb}
\usepackage{cases}
\usepackage{comment}
\usepackage{graphicx} 
\usepackage{mathtools}
\usepackage{soul}
\usepackage{mathabx}
\usepackage{xcolor}
\usepackage[hidelinks]{hyperref}
\usepackage{bbm}

\newtheorem{assumption}{Assumption}

\newtheorem{theorem}{Theorem}[section]
\newtheorem{notation}[theorem]{Notation}
\newtheorem{remark}[theorem]{Remark}
\newtheorem{definition}[theorem]{Definition}
\newtheorem{proposition}[theorem]{Proposition}
\newtheorem{lemma}[theorem]{Lemma} 
 
\newtheorem{corollary}[theorem]{Corollary}

\numberwithin{equation}{section}

\definecolor{LB}{rgb}{0,0.5,0.8}

\newcommand{\g}{\gamma}
\newcommand{\al}{\alpha}
\newcommand{\N}{\mathbb N}
\newcommand{\be}{\beta}
\newcommand{\R}{\mathbb R}

\newcommand{\E}{\mathbb E}

\def\shc{{\cal C}}
\def\shd{{\cal D}}

\def\shf{{\cal F}}

\def\shi{{\cal I}}

\def\shl{{\cal L}}
\def\shp{{\cal P}}
\def\shs{{\cal S}}

\def\P{\mathbb P}

\title{A non-local singular non-linear   Fokker-Planck PDE}
\author{Luca Bondi\footnote{University of Torino and ENSTA Paris, luca.bondi@unito.it}, Elena Issoglio\footnote{University of Torino, elena.issoglio@unito.it} and Francesco Russo\footnote{ENSTA Paris, francesco.russo@ensta.fr}}
\date{May 2026}

\begin{document}

\maketitle
\begin{abstract}
  The focus of this paper is a non-local singular non-linear Fokker-Planck partial differential equation (PDE). 
  The peculiarity of this PDE feature is in its divergence coefficient, which presents a product between a Besov distribution  and a non-linearity. The latter involves the convolution between an integrable kernel $K$ and the solution of the  PDE, which  leads to a non-locality of the first order term in the PDE.
  We prove existence and uniqueness of a solution to the PDE as well as continuity results on its coefficients.
  Previous analytical results are then applied
  to the study of  well-posedness in law for a 
  non-local singular McKean stochastic differential equation.
  As byproduct of that probabilistic representation, we establish mass conservation and positivity preserving
  for the PDE.
  \end{abstract}
\textbf{Key words and phrases}: Fokker-Planck PDE; non-linear coefficients; non-local coefficients; Besov spaces, McKean SDE with distributional coefficients.

\textbf{2020 MSC}: 35Q84, 
 35K55;  
 35K67; 
 60H50. 


\section{Introduction}\label{sc:intro}

In this paper we show the well-posedness   of a
 non-local singular non-linear Fokker-Planck partial differential equation (PDE) of the form
\begin{equation}\label{eq:FPmod}
    \left\{
     \begin{array}{l}
     \partial_t v=\frac12\Delta v-\text{div}(vF(K*v)b)\\
      v(0)=v_0,
     \end{array}
      \right.
    \end{equation}
    where $b$ is a function of time taking values in a $d$-dimensional Besov space  of negative index $-\beta<0$, hence $b$ is a distribution in the space variable, formally $b:[0,T]\times \R^d \to \R^d$, $K :\R^d \to \R$ and $v_0:\R^d \to \R$ are integrable functions which are also elements of an inductive Besov space of positive index $\beta+$ (see \eqref{eq:indBesov} below), and $F:\R \to \R$ is a non-linearity, see Definition \ref{def:besov} and Assumptions \ref{ass:beta}, \ref{ass:K}, \ref{ass:F} for more details.
For the PDE \eqref{eq:FPmod}, we also investigate the mass conservation and positivity preserving properties.
    
This paper also concerns the well-posedness of closely related non-local singular McKean SDEs of the type
\begin{equation}\label{eq:SDEmod}
\left\{
\begin{array}{l}
dX_t=F(K*v(t,X_t))b(t,X_t)dt+dW_t\\
v(t,\cdot)\text{ is the law density of } X_t\\
  X_0\sim v_0(x) dx,
\end{array}
\right.
\end{equation}
where $W$ is a $d$-dimensional Brownian motion.
We highlight  that equation \eqref{eq:SDEmod} is only formal at this level, because the term $b(t,X_t)$ cannot be evaluated,
since $b$ is a distribution. 

The Fokker-Planck denomination was formerly reserved  for linear PDEs, but nowadays it is also used in non-linear settings.
Fokker-Planck PDEs are generally related to stochastic processes, and at the analytical level they are
second order parabolic PDEs with mass conservation, whose solutions preserve positivity if the initial value is positive. For a recent survey on mass conservation in non-linear diffusions, see \cite{Vazquez23survey}. 
Maybe the most celebrated (local)  non-linear Fokker-Planck PDE is the so called generalised porous media equation, first investigated  by \cite{Benilan, BenilanCrandall, BrezisCrandall}. A more recent monograph at the analytical level on the topic is  \cite{RoecknerFokkerBook}.
One important contribution still in the linear local Fokker-Planck PDE was
provided by \cite{Jordan96}, which described 
the Fokker–Planck dynamics as a gradient flow of its own free energy.
The authors linked a solution of a linear Fokker-Planck PDE to  a minimiser
of a free-energy functional under the 2-Wasserstein distance for probability measures.
Moreover they formulated
 the so called
Jordan-Kinderlehrer-Otto (JKO) scheme, \cite{Jordan96}.
A constrained version of the JKO scheme was introduced in \cite{Carlen_Gangbo03} to deal with
non-linear  non-conservative (local) Fokker-Planck PDEs, which, even though 
they  dissipate the mass, they satisfy some conservation laws.
 
In the literature, we also find extensions to Fokker-Planck equations involving {\it non-local} terms,
and thus giving rise to integro Fokker-Planck PDEs. Non-locality may arise in applications at an intrinsic level, such as
in mathematical biology (swarm of insects, flock of birds, colony of bacteria) where individuals can interact not only
with their immediate neighbours but also with individuals far away, see \cite{swarm96a,swarm99,swarm96b} for
some examples on integro-differential population models.
 Furthermore, coefficients in any applied model are
often found to be non-local because measurements are often local averages. 

The JKO scheme mentioned earlier in the local case, opened the way to
investigate also non-local  PDEs.  
In \cite{tudorascu11} the authors study a non-linear parabolic problem of porous media type, featuring
a non-local first-order term proving existence of a weak solution that conserves mass, mean and variance.
The strategy of  their proof is based on the constrained (JKO) scheme of  \cite{Carlen_Gangbo03}.
 More recent works in this spirit are \cite{Eberle} and \cite{Ferreira}.  
 The non-local feature can be also found in the leading operator,  such as a fractional Laplacian, see e.g.\ Section 3 of \cite{GomezCastro24}
 which includes relevant contributions of L. A. Caffarelli and J. L. Vazquez.
We also mention  \cite{chaudru_jabir_menozzi22}, in relation to some probabilistic representations in the fractional Laplacian framework.
Nevertheless  these are beyond the scope of this paper.

Our Fokker-Planck PDE presents two
aspects that make its study challenging.

{\em (i)} First of all, the product between $vF(K*v)$ and  $b$ is defined as an element of a  Besov space of negative index, making use of the notion of  pointwise product between a function and a distribution, see \cite{bony}. 
Notice that the map which associates a generic probability
measure $\nu$ to $\nu F(K\ast \nu) b$ is not defined
because of the pointwise product with the distribution $b$, hence  it is not possible to use techniques involving 
   Wasserstein distance  for the study of  PDE \eqref{eq:FPmod}, hence JKO schemes are not available.

   {\em (ii)} Secondly, the PDE is non-local in the divergence term because of the convolutional term $K*v$, and
  so we cannot easily make use of PDE techniques
  in showing that the map
  \begin{equation}\label{eq:phi-intro}
v \mapsto v F(K\ast v) =: \phi(v),
\end{equation}
going from a Besov space of positive index $\beta$ into itself,
has linear growth, which for instance happens if $K = \delta_0$,
because $v \mapsto v F(v)$ is Lipschitz,
see Lemma 3.1 of \cite{issoglio_russoMK}.

Finally we highlight that PDE \eqref{eq:FPmod} is non-linear due to the non-linearity $F$ evaluated at $K*v$, which may be problematic
for any fixed point argument.
Below we will explain how we overcome those obstacles, in particular
the lack of linear growth property for the map
\eqref{eq:phi-intro}.

Our proposed approach
exploits the a priori probabilistic constraints that a solution 
$v$ {\em should} satisfy. It is well-known that  solutions to linear Fokker-Planck equations are classically
found as marginal laws of stochastic processes satisfying certain stochastic dynamics.
When those laws admit a density,
they have to be non-negative and their $L^1$-norm is conserved and equals 1.
First of all we show in Proposition \ref{pr:C_al}  that the map $\phi$ defined in  \eqref{eq:phi-intro} maps  $\mathcal C^\beta \cap L^1$ into itself
and has linear growth and locally Lipschitz property in
$\shc^\beta$ on the unit ball of $L^1$.
The notion of solution that we use for the fixed point is the mild one, namely a function $v\in C_T\mathcal C^{\beta+}$ such that 
 \begin{equation}\label{eq:mild-intro}
v(t)= P_{t}v_0-\int_0^t P_{t-s}\big({\rm div}\left(
\phi(v(s))
b(s)\right)\big)ds,
 \end{equation}
 where $(P_t)$ is the heat semigroup, see Definition \ref{def:MsolFP}. This choice 
is justified, since we show  the equivalence between mild solutions and  weak solutions,  see Definition \ref{def:WsolFP} and Lemma \ref{lm:W-M}.

 In our approach, the solution map
 is not, as usual, the right-hand side of \eqref{eq:mild-intro}.
Instead, for  a given  function $w \in C_T \shc^\beta$,
we associate the solution
$v = \tau (w)$ of the linear PDE
\begin{equation}\label{eq:FPlin-intro}
 \left\{
 \begin{array}{l}
   \partial_t v=\frac12\Delta v-{\rm div}(v g_w)\\
    v(0)=v_0,
    \end{array}
    \right.
\end{equation}
where  $v_0\in\shc^{\be+}$ and $g_w = F(K\ast w) b$, which belongs 
to $C_T\shc^{(-\be)+}$, see Notation \ref{not:gw}. The map
$\tau: C_T \shc^\beta \rightarrow  C_T \shc^\beta  $ is the new (linearised) solution map for \eqref{eq:FPmod}.
Indeed we will  investigate the linear PDE \eqref{eq:FPlin-intro}
with generic $g \in C_T\shc^{(-\be)+}$.
In particular, we show  its  well-posedness in Theorem \ref{thm:exunlinFP},
and  we prove
that the solution $v$ is bounded in $L^1$ and it 
is non-negative  provided that $v_0 \in L^1$ and it is non-negative, see Corollary  \ref{cor:lin_sing_PDE}.
Consequently, under previous assumption on $v_0$, $\tau$ takes values
in  $C_T\shc^{\be} \cap B_TL^1$, where $B_TL^1$ is the space of bounded $L^1$-valued functions of time.
As a consequence of Corollary  \ref{cor:lin_sing_PDE} and the generalized Gronwall lemma,
Proposition \ref{pr:uniFPlim} shows that \eqref{eq:FPmod} admits at most one solution,
in particular one gets uniqueness.

Suppose moreover that $ \Vert v_0 \Vert_{L^1} \le 1$. By Proposition \ref{lin_sing_PDE}, which is based on extensions of Feynman-Kac techniques, we know that
any given solution belongs to $S_{L^1}$, i.e. the space of non-negative functions $u$ defined on $[0,T]$
taking values in $L^1$-unit ball,
see \eqref{def:B1}. 
At this point by restriction, $\tau$ can be considered
%
as
\begin{equation}\label{eq:tau-intro}
\tau: C_T\shc^\al\cap S_{L^1}\to C_T\shc^\al\cap S_{L^1},
\end{equation}
which is shown to be a contraction via
 Proposition \ref{pr:boundtau}. Hence the unique fixed point of $\tau$ must be the unique
solution of the non-linear Fokker-Planck PDE \eqref{eq:FPmod}, see Theorem  \ref{thm:WEFPPDE}.
Beyond the well-posedness of the Fokker-Planck PDE \eqref{eq:FPmod}, we also establish its continuity property with
respect to the coefficient $b$.

Below we will discuss the probabilistic representation of  \eqref{eq:FPmod}.
We emphasize that, using that tool, Theorem \ref{thm:lin-lim} allows to prove
mass conservation and positivity preserving of the solution of
\eqref{eq:FPmod}. We point out that this result holds even when the kernel $K$
is any probability measure.

\vspace{10pt}
Let us now give some details about the link between Fokker-Planck PDEs and stochastic processes.
Linear Fokker-Planck PDEs, as foretold above, arise naturally when  studying the time evolution of the density of a class of
stochastic processes. 

On the other hand, given a solution $v:[0,T] \rightarrow \shp(\R^d)$,
of a Fokker-Planck PDE with bounded measurable coefficients
one can construct stochastic process, solution of
a martingale problem, whose marginal laws are $v$.
This process is the so called {\em probabilistic representation}
of the Fokker-Planck PDE. This was done in 
Theorem 2.6 in \cite{figalli} and it was generalised in  Theorem 2.5 in \cite{Trevisan}
and Theorem 1.1 of \cite{bogachev_superposition}.
This link can be shown to hold under suitable conditions  also in the case of non-linear Fokker-Planck PDEs.
The starting point was the seminal work  by McKean \cite{McKean} in the mid 60's,
which introduced the so called McKean SDEs, which are SDEs, whose coefficients  depend not only
on time and space, but also
on the law of the solution itself.
More recently, in the case of irregular coefficients 
we refer e.g.
to
\cite{BRR, BRR2, BCR2} in the case of porous media and fast diffusion PDEs,  \cite{Olivrichtoma} in the case of Keller-Segel
models and
\cite{BarbuRoeckSuperposition, BarbuRockSIAM} in the general case, when the coefficients are still functions,
making use of the superposition principle.
Finally, in the case  the drift is a Schwartz distribution, we refer for instance to
\cite{issoglio_russoMK, issoglio_et.al24},
and \cite{BIR_Review} for a recent survey.

As anticipated, this paper concerns non-local singular McKean SDEs of the type \eqref{eq:SDEmod},
which constitutes the probabilistic representation of
the non-local singular non-linear 
Fokker-Planck PDE \eqref{eq:FPmod}.
Since $b(t,X_t)$ cannot be evaluated,
because  $b$ is a distribution, we must rely on a notion of solution for singular  SDEs  that is framed through the
{\em rough martingale problem} without law dependency, see Definition \ref{def:rMP}. The solution to a rough martingale problem is  a probability
measure on the canonical space which corresponds to the law of a stochastic process which is formally the solution
to the SDE  \eqref{eq:SDEmod}. Notice that \eqref{eq:SDEmod} is mathematically  meaningless
without  noise: this phenomenon is known in stochastic analysis as regularization by noise, see e.g.\ \cite{flandoli-book}.
Our main result  on well-posedness of \eqref{eq:SDEmod} is Theorem \ref{thm:WPMKSDE}, and the analysis on the non-local singular non-linear Fokker-Planck PDE \eqref{eq:FPmod}  is  key to its proof.
It turns out that, the solution of \eqref{eq:SDEmod} will be constructed via
a solution of an associated  rough martingale problem without law dependency.
For this purpose, in  Proposition \ref{prop:SDEtoPDE} we  show that the marginal laws of the solution to a rough martingale problem
in the sense of Definition \ref{def:rMP}, 
admit a density   and  such densities constitute  a solution to the corresponding linear Fokker-Planck PDE.

\vspace{10pt}

The paper is organised as follows. Section \ref{sc:prelim} introduces the definition of the Besov spaces employed
in this study,  useful estimates of the heat semigroup like Schauder's and Bernstein's estimates, an associativity property
for Besov spaces, some topological preliminaries and three important continuity results: Lemma \ref{lm:BNcont}, Lemma \ref{lm:prep0}
 and Proposition \ref{pr:C_al}.
 Section \ref{sc:FP} introduces the notions of solution (weak and mild) of \eqref{eq:FPmod} belonging to $C_T \shc^\beta$
 and of its linearised version.
 Theorem \ref{thm:lin-lim} states the a priori property of a solution to \eqref{eq:FPmod} in terms
 of mass conservation and positivity preserving.
 Section \ref{ssc:linearFP}  is dedicated to the study of the associated singular  linear Fokker-Planck  PDE
 \eqref{eq:FPlin-intro}, with its  well-posedness proved in Theorem \ref{thm:exunlinFP}.
 Useful continuity results  are stated
 in Theorem \ref{thm:exunlinFPCont}, and $L^1$-bounds  in  Proposition \ref{lin_sing_PDE} and Corollary \ref{cor:lin_sing_PDE}.
 Section \ref{ssc:nonlinear} contains the proof of the main result on  well-posedness of \eqref{eq:FPmod}, stated in
Proposition \ref{pr:uniFPlim} (uniqueness) and
 Theorem \ref{thm:WEFPPDE} (existence).
 Section \ref{sec:PRFP} is devoted to the probabilistic representation of   \eqref{eq:FPmod}. In particular, Section \ref{sc:rMP} introduces the tools and the definition of rough martingale problem, and Section \ref{ssc:MK} is dedicated to the study of well-posedness of the singular McKean SDE \eqref{eq:SDEmod} via such rough martingale problem.

\section{Topological setting }\label{sc:prelim}
 
\subsection{Basic definitions and recalls}

We give some definitions and preliminary results on spaces and functions which will be used later on. For a strictly positive integer $m$, $C_b^m(\R^d)$ (resp. $C_b^{1,m}([0,T] \times \R^d)$) is the Banach space of $C^m(\R^d)$ (resp. $C_b^{1,m}([0,T] \times \R^d)$) bounded
real-valued functions with bounded derivatives equipped with
the usual sup-norm, the latter denoted as $\|\cdot\|_{\infty}$. We denote the space of real-valued Schwartz functions on
$\R^d$ by $\shs(\R^d)=\shs$ and  the space of tempered distributions $\shs'(\R^d)=\shs'$. $T >0$ will be a fixed terminal time. 
We denote by $\mathcal{F}$ and $\mathcal{F}^{-1}$ the Fourier transform on $\shs$ and
its inverse, see Definition 1.20 of \cite{sawano}, which are extended on $\mathcal{S}'$ in the usual way, see Definition 1.21 of \cite{sawano}. The dual pairing on $\shs$ and $\shs'$ will be indicated with $\langle\cdot,\cdot\rangle$. $L^1:=L^1 (\R^d)$ will denote the space of real-valued
Lebesgue integrable functions, and $\shp(\R^d)$  denotes the set of all Borel  probability measures on $\R^d$.  Given a topological vector space $E$, $C_TE$ will denote the space of $E$-valued continuous functions defined on $[0,T]$, and, whenever $E$ is a Banach space, $B_TE$ will denote the space of $E$-valued bounded functions defined on $[0,T]$.

Throughout the paper $c$ will denote a universal positive constant, while $C$ will denote a positive  constant which will depend on one or more parameters  (possibly indicated with subscripts), that will be specified case by case. These constants can vary from line to line.
Given a function $f:[0,T] \times \R^d \rightarrow \R$, we will often denote $f(s):= f(s,\cdot)$.


Besov spaces are a fundamental element of this study. A classical general reference is \cite{sawano},
Section 2.1, which uses the denomination Nikolskii–Besov spaces.

\begin{definition}\label{def:besov}
For every $\gamma \in \R$, $\shc^{\g}$ will denote $B^{\g}_{\infty,\infty}(\R^d)$.
     Furthermore, we define the inductive Besov space $\shc^{\g+}$ as
      \begin{equation}\label{eq:indBesov}
      \shc^{\g+}:=\bigcup_{\al>\g}\shc^{\al}.
    \end{equation}
 We also define
  $\mathcal C^{\gamma-}$  the  topological space given by
\[
\mathcal C^{\gamma-}:= \bigcap_{\alpha <\gamma} \mathcal C^{\alpha},
\]
  see Section 2.1 of \cite{issoglio_russoPDEa}.

  \end{definition}
By Proposition 2.3 in \cite{sawano}, we remark that $\shc^{\g'}\subseteq \shc^{\g}$ for all $\g,\g'\in\R$ such that $\g<\g'$.
By Theorem 2.7 in  \cite{sawano}, we know that if $\g\in(0,1)$, then the Besov Space $\shc^\g$ is equivalent to the
H\"older-Zygmund space, which we will still denote $\shc^\g$ by a slight abuse of notation. This is endowed with the norm $\|\cdot\|_{\shc^\g}$, defined for all $h\in\shc^\g$ as
\begin{equation}\label{eq:holdnorm}
\|h\|_{\shc^\g}:=\|h\|_{\infty}+\|h\|_{\g}:=\sup_{x\in\R^d}|h(x)|+\sup_{x,y\in\R^d, x\not=y}\frac{|h(x)-h(y)|}{|x-y|^\g}.
\end{equation}
In the whole paper, for a given $\gamma \in \R$, we will still denote by $\shc^\gamma$ any vector whose elements
belong $\shc^\gamma$. 
Finally we recall that Lemma B.2 of \cite{issoglio_russoMK} guarantees that $C_T\shc^{\g+}=\bigcup_{\al>\g}C_T\shc^{\al}$. This constitutes a characterisation of these inductive spaces of distributions. 
In particular,
if a sequence $(h^n)$ in $C_T\shc^{\g+}$ converges to some $h\in C_T\shc^{\g+}$ then there exists $\alpha>\g$ such that $h^n \to h$ in $C_T\shc^{\alpha}$. Analogously for the convergence in $\shc^{\g+}$.

Next we list the assumptions that will be needed later on. 
\begin{assumption}\label{ass:beta} 
  $0<\be<\frac12$ and $ b$ is a $d$-dimensional vector, whose components belong to $C_T\shc^{(-\be)+}$.
 \end{assumption}
\begin{assumption}\label{ass:K}
    $K:\R^d\to \R$ is a probability density function which is also an element of $\shc^{\be+}$.
\end{assumption}
\begin{assumption}\label{ass:F}
    The function $ F\in C^1(\R)$ such that $ F'$ is bounded and Lipschitz.
\end{assumption}

\begin{definition}\label{def:heatkern}
  Let $(P_t)$ denote the semigroup generated by $\frac12\Delta$ on $\shs$, in particular for all $\Phi\in\shs$ we define
  $P_t \Phi = \int_{\R^d} p_t(x-y) \Phi(y) dy, t \ge 0$,
 where $p$ is the heat kernel
\begin{equation}\label{eq:heatkern}
p_t(z):=\frac1{(2\pi t)^{d/2}} \exp^{-\frac{|z|^2}t}.
\end{equation}
It is easy to see that $P_t:\shs\to\shs$. 
Moreover we can extend it to $\shs'$ by dual pairing (and we denote it with the same notation
for simplicity), namely one has $\langle P_t\Psi, \Phi\rangle:=\langle \Psi,P_t\Phi \rangle$
for all $\Psi\in\shs'$ and $\Phi\in\shs$, the equality holds without changing the sign, because $p$ is symmetric.
\end{definition}
The estimates below are known as \emph{Schauder's estimates}: for a proof we refer to \cite[Lemma 2.5]{catellier_chouk}, see also \cite{gubinelli_imkeller_perkowski} for similar results. 

\begin{lemma} \label{lm:schauder}
    Let $f\in \shc^{\g}$ for some $\gamma \in \R$. Then for any $\theta\geq 0$ there exists a constant $c$ such that
    \begin{equation}\label{eq:schauder1}
    \|P_tf\|_{\shc^{2\theta+\g}}\leq c t^{-\theta} \|f\|_{\shc^\g},        
    \end{equation}
    for all $t>0$. 
  Moreover for $f\in\shc^{2\theta+\g}$ and for all $\theta \in (0,1)$ we have
    \begin{equation}\label{eq:schauder2}
        \|P_tf-f\|_{\shc^\g}\leq ct^{\theta} \|f\|_{\shc^{2\theta+\g}}.
    \end{equation}
  \end{lemma}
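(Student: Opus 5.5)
The plan is to work with the Littlewood--Paley characterisation of $\shc^\gamma = B^\gamma_{\infty,\infty}$,
\[
\|f\|_{\shc^\gamma}=\sup_{j\ge -1}2^{j\gamma}\|\Delta_j f\|_\infty ,
\]
where $(\Delta_j)$ are the dyadic blocks. The key observation is that $P_t$ is a Fourier multiplier, so it commutes with every $\Delta_j$. The problem therefore reduces to bounding $\|P_t\Delta_j f\|_\infty$ block by block, with constants uniform in $t\in(0,T]$; for $t>T$ one can absorb the difference into $c$ or treat it separately. We also use that $P_t$ is a contraction on $L^\infty$, since it is convolution with a probability density.

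\textbf{Estimate \eqref{eq:schauder1}.} For the low-frequency block $j=-1$ we simply use $\|P_t\Delta_{-1}f\|_\infty\le\|\Delta_{-1}f\|_\infty\le\|f\|_{\shc^\gamma}$. For $t\le T$ this is bounded by $cT^{\theta}t^{-\theta}\|f\|_{\shc^\gamma}$.

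For $j\ge0$ the Fourier support of $\Delta_j f$ lies in an annulus $2^j\mathcal A$. Choose $\tilde\varphi$ smooth, supported in a slightly larger annulus and equal to $1$ on $\mathcal A$. Then
\[
P_t\Delta_j f=\mathcal F^{-1}\big(e^{-t|\xi|^2/2}\tilde\varphi(2^{-j}\xi)\big)*\Delta_j f .
\]
The core step is a Bernstein-type kernel bound:
\[
\big\|\mathcal F^{-1}\big(e^{-t|\xi|^2/2}\tilde\varphi(2^{-j}\xi)\big)\big\|_{L^1}\le C e^{-c t2^{2j}} .
\]
To prove it, rescale $\xi=2^j\eta$, which leaves the $L^1$ norm invariant. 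This reduces the claim to bounding $\|\mathcal F^{-1}(e^{-\lambda|\eta|^2/2}\tilde\varphi(\eta))\|_{L^1}$ with $\lambda=t2^{2j}$. For that, we control $L^1$ by $\sup_x(1+|x|)^{d+1}|\cdot|$, i.e.\ by finitely many derivatives of the symbol in $L^1$. On the annulus, each derivative of $e^{-\lambda|\eta|^2/2}$ is bounded by $C(1+\lambda)^k e^{-\lambda c}$, which is at most $C'e^{-c'\lambda}$.

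Given the kernel bound, we get
\[
2^{j(2\theta+\gamma)}\|P_t\Delta_jf\|_\infty\le C\,2^{2j\theta}e^{-ct2^{2j}}\,2^{j\gamma}\|\Delta_jf\|_\infty\le C t^{-\theta}\|f\|_{\shc^\gamma},
\]
using $\sup_{x>0}x^\theta e^{-cx}<\infty$ with $x=t2^{2j}$. I expect this kernel estimate to be the main (if routine) obstacle.

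\textbf{Estimate \eqref{eq:schauder2}.} Write $(P_t-I)\Delta_j f$.
\begin{itemize}
\item \emph{Block $j=-1$.} Here $e^{-t|\xi|^2/2}-1=O(t)$ smoothly on the ball, which gives a bound $Ct\le CT^{1-\theta}t^\theta$.
\item \emph{Blocks $j\ge0$.} The same rescaling applies to the symbol $(e^{-\lambda|\eta|^2/2}-1)\tilde\varphi(\eta)$. Its derivatives are $O(\min(\lambda,1))$, since the exponential is bounded and $1-e^{-\lambda s}\le\lambda s$. Hence
\[
\|(P_t-I)\Delta_jf\|_\infty\le C\min(t2^{2j},1)\|\Delta_jf\|_\infty\le C(t2^{2j})^\theta\|\Delta_jf\|_\infty
\]
for $\theta\in(0,1)$.
\end{itemize}
Multiplying by $2^{j\gamma}$ gives $2^{j\gamma}\|(P_t-I)\Delta_jf\|_\infty\le Ct^\theta 2^{j(2\theta+\gamma)}\|\Delta_j f\|_\infty\le Ct^\theta\|f\|_{\shc^{2\theta+\gamma}}$. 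Taking the supremum over $j$ concludes.

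Finally, all operations extend from $\shs$ to $\shs'$ by the duality in Definition~\ref{def:heatkern}, because Fourier multipliers commute with that extension.
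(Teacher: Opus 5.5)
Your dyadic-block argument is correct. It is exactly the standard proof behind the result the paper only cites (Catellier--Chouk, Lemma 2.5; Gubinelli--Imkeller--Perkowski): commute $P_t$ with $\Delta_j$, use the kernel bound $\|\mathcal F^{-1}(e^{-t|\xi|^2/2}\tilde\varphi(2^{-j}\xi))\|_{L^1}\le Ce^{-ct2^{2j}}$ for \eqref{eq:schauder1}, and use the $O(\min(t2^{2j},1))$ bound on the symbol of $P_t-I$ for \eqref{eq:schauder2}.

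The one thing to correct is your aside that $t>T$ can be "absorbed into $c$". For $\theta>0$, inequality \eqref{eq:schauder1} is genuinely false for large $t$: take $f\equiv 1$, so $P_tf=f$ while $ct^{-\theta}\|f\|_{\shc^\gamma}\to 0$. It therefore only holds on $(0,T]$ with $c=c(T)$, which is all the paper ever uses. By contrast, \eqref{eq:schauder2} does hold for all $t>0$ once you bound the $j=-1$ block by $C\min(t,1)\le Ct^{\theta}$.
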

 \medskip

  We also recall  Bernstein's inequalities, see \cite[Lemma 2.1]{bahouri} and \cite[Appendix A.1]{gubinelli_imkeller_perkowski}.
\begin{lemma} \label{lm:bern}
Let $\g\in\R$ then there exists $c>0$ such that 
\begin{equation*}
    \|\nabla g\|_{\shc^\g}\leq c\|g\|_{\shc^{\g+1}},
\end{equation*}
for all $g\in\shc^{\g+1}$.
\end{lemma}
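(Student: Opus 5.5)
The statement to prove is Lemma \ref{lm:bern}: $\|\nabla g\|_{\shc^\g}\le c\|g\|_{\shc^{\g+1}}$ for all $g\in\shc^{\g+1}$. I would argue directly from the Littlewood--Paley definition of $\shc^{\g}=B^{\g}_{\infty,\infty}$. Fix a dyadic partition of unity $(\chi,\varphi)$, with $\chi$ supported in a ball and $\varphi$ supported in an annulus $\{3/4\le|\xi|\le 8/3\}$. Write $\Delta_j g=\mathcal F^{-1}(\varphi(2^{-j}\cdot)\mathcal F g)$ for $j\ge0$ and $\Delta_{-1}g=\mathcal F^{-1}(\chi\mathcal F g)$. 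The norm is then $\|g\|_{\shc^\g}=\sup_{j\ge-1}2^{j\g}\|\Delta_j g\|_{\infty}$. Since differentiation is a Fourier multiplier, it commutes with the blocks: $\Delta_j\partial_i g=\partial_i\Delta_j g$. So it suffices to prove the blockwise bound
\[
\|\partial_i \Delta_j g\|_\infty\le c\,2^{j}\|\Delta_j g\|_\infty ,\qquad j\ge -1.
\]
Multiplying by $2^{j\g}$ and taking the supremum over $j$ then gives $\|\partial_i g\|_{\shc^\g}\le c\|g\|_{\shc^{\g+1}}$ for each component. The result for the vector $\nabla g$ follows by summing over $i$.

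The blockwise bound is the classical Bernstein inequality, which I would prove as follows. Choose $\tilde\chi\in C_c^\infty$ with $\tilde\chi\equiv1$ on the support of $\varphi$ and on the support of $\chi$. Since $\mathcal F(\Delta_j g)$ is supported in $2^j\,\mathrm{supp}\,\varphi$, we have
\[
\Delta_j g=\mathcal F^{-1}\big(\tilde\chi(2^{-j}\cdot)\big)*\Delta_j g .
\]
Hence $\partial_i\Delta_j g=(\partial_i h_j)*\Delta_j g$, where $h_j(x)=2^{jd}h(2^jx)$ and $h=\mathcal F^{-1}\tilde\chi\in\shs$. Scaling gives $\|\partial_i h_j\|_{L^1}=2^j\|\partial_i h\|_{L^1}$, and Young's inequality ($L^1*L^\infty$) yields the bound with $c=\max_i\|\partial_i h\|_{L^1}$. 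For $j=-1$ the same argument applies with a fixed constant. The constant depends only on the chosen partition of unity, so it is uniform in $j$.

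The only point needing care is justifying these manipulations for $g$ a tempered distribution rather than a function. Each $\Delta_j g$ has compactly supported Fourier transform, so by Paley--Wiener--Schwartz it is a smooth function of polynomial growth; it is bounded whenever $g\in\shc^{\g+1}$. The convolution identity also holds in $\shs'$, since $h_j\in\shs$. I expect no real obstacle. Alternatively, one can simply cite \cite[Lemma 2.1]{bahouri} for the blockwise inequality and conclude as above.
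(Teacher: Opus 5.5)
Your proposal is correct and follows the same route as the paper. The paper gives no proof of its own: it recalls the lemma from \cite[Lemma 2.1]{bahouri}, which is precisely the blockwise Bernstein inequality you prove via the rescaled Schwartz kernel and Young's inequality, and from \cite[Appendix A.1]{gubinelli_imkeller_perkowski}; your commutation $\Delta_j\partial_i=\partial_i\Delta_j$ followed by taking the supremum over $j$ is the standard deduction from it.
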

Using Schauder's and Bernstein's inequalities we can easily obtain a useful estimate on the gradient of the semigroup, as we see below. 
\begin{corollary} \label{cor:bernschau}
Let $\g\in\R$ and $\theta\in (0,1)$. If $g\in\shc^{\g}$ then for all $t>0$ we have
$\nabla(P_tg) \in \shc^{\g+2\theta-1}$ and
\begin{equation}
    \|\nabla(P_tg)\|_{\shc^{\g+2\theta-1 }}\leq ct^{-\theta}\|g\|_{\shc^\g}.
\end{equation}
\end{corollary}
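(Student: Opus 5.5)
The plan is to apply Bernstein's inequality and then Schauder's estimate, exploiting the fact that $\nabla$ and $P_t$ commute on $\shs'$.

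First I justify the commutation. For $\Phi\in\shs$ we have $\nabla P_t\Phi = P_t\nabla\Phi$, because $P_t\Phi=p_t*\Phi$ and derivatives pass onto either factor of a convolution. For $g\in\shs'$ and $\Phi\in\shs$, I use duality, the symmetry of $p_t$, and the definition of the distributional derivative:
\[
\langle \partial_i P_t g,\Phi\rangle=-\langle P_t g,\partial_i\Phi\rangle=-\langle g,P_t\partial_i\Phi\rangle=-\langle g,\partial_i P_t\Phi\rangle=\langle \partial_i g,P_t\Phi\rangle=\langle P_t\partial_i g,\Phi\rangle .
\]
Hence $\nabla(P_tg)=P_t(\nabla g)$ in $\shs'$.

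Next I apply Schauder and Bernstein in either order. One route: Lemma \ref{lm:schauder}, \eqref{eq:schauder1}, with the given $\theta\ge 0$ shows $P_tg\in\shc^{\g+2\theta}$ with $\|P_tg\|_{\shc^{\g+2\theta}}\le c t^{-\theta}\|g\|_{\shc^\g}$. Then Lemma \ref{lm:bern}, with $\g$ replaced by $\g+2\theta-1$, gives
\[
\|\nabla(P_tg)\|_{\shc^{\g+2\theta-1}}\le c\|P_tg\|_{\shc^{\g+2\theta}}\le c\,t^{-\theta}\|g\|_{\shc^\g}.
\]
This route does not even need the commutation, since Bernstein is applied directly to $P_tg$. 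The alternative route applies Bernstein first, so $\nabla g\in\shc^{\g-1}$, and then Schauder to $P_t\nabla g$; for that order the commutation step above is required.

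Membership of $\nabla(P_tg)$ in $\shc^{\g+2\theta-1}$ follows from the finiteness of the right-hand side. The constant is the product of the Schauder and Bernstein constants, and it is uniform in $t>0$.

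No real obstacle is expected. The only point needing care is that the Besov indices are arbitrary reals, and both lemmas are stated for every $\g\in\R$, so they apply to negative indices as well.
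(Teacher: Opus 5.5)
Your proposal is correct and matches the paper, which obtains this corollary directly by combining the Schauder estimate \eqref{eq:schauder1} with Bernstein's inequality (Lemma \ref{lm:bern}) exactly in your first order: $\|\nabla(P_tg)\|_{\shc^{\g+2\theta-1}}\le c\|P_tg\|_{\shc^{\g+2\theta}}\le c\,t^{-\theta}\|g\|_{\shc^\g}$. As you note yourself, the commutation of $\nabla$ and $P_t$ is not needed for this route.
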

We formulate a slight straightforward adaptation of  Proposition 2.4 of \cite{issoglio_russoMK},  which will be used later.
\begin{lemma}\label{lm:Xncont}
Let $g\in C_T\shc^{(-\be)+}$.  Let us consider the sequence $(g_n)$ defined by
\begin{equation}\label{eq:molbn}
g_n(t,\cdot) :=p_{\frac1n} \ast g(t,\cdot), \ t \in[0,T ],
\end{equation}
 $n \geq1$, where $p$ was defined in \eqref{eq:heatkern}. Then we have the following.
\begin{itemize}
\item[(i)] For each $n$, $g_n$ is globally bounded, together with all its space derivatives.
\item[(ii)] For each $n$, $t\mapsto g_n(t,\cdot)$ is continuous in $\shc^\g$ for all $\g>0$. In particular $g_n\in C_T\shc^{(-\be)+}$.
   \item[(iii)] We have the convergence $g_n \to g $ in $C_T\shc^{(-\be)+}$.
\end{itemize}

\end{lemma}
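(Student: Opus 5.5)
The plan is to reduce everything to the semigroup estimates already available, namely Lemma \ref{lm:schauder} and Corollary \ref{cor:bernschau}. The starting point is the characterisation $C_T\shc^{(-\be)+}=\bigcup_{\al>-\be}C_T\shc^{\al}$ from Lemma B.2 of \cite{issoglio_russoMK}. It lets me fix once and for all some $\g>-\be$ with $g\in C_T\shc^{\g}$, and then work in that single Banach space. Note that $g_n(t)=P_{1/(2n)}g(t)$, or $P_{c/n}g(t)$ depending on the normalisation of $p$ in \eqref{eq:heatkern}. In either case $g_n(t)=P_{\varepsilon_n}g(t)$ with $\varepsilon_n\downarrow 0$, so all three items become statements about the heat semigroup applied to a continuous $\shc^\g$-valued path.

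For (i) and (ii), fix $n$ and apply \eqref{eq:schauder1} with an arbitrary $\theta\ge0$. This gives
\[
\|g_n(t)\|_{\shc^{2\theta+\g}}\le c\,\varepsilon_n^{-\theta}\|g(t)\|_{\shc^\g}\le c\,\varepsilon_n^{-\theta}\sup_{s\le T}\|g(s)\|_{\shc^\g},
\]
and the right-hand side is finite by continuity of $g$ on the compact interval $[0,T]$. Choosing $\theta$ large makes $2\theta+\g$ exceed any prescribed integer $m$. Each space derivative of order $k\le m$ is then in $\shc^{2\theta+\g-k}$ by Lemma \ref{lm:bern}, and this space has positive index, so it embeds in bounded continuous functions. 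This gives (i), uniformly in $t$. For (ii) I use linearity:
\[
\|g_n(t)-g_n(s)\|_{\shc^{2\theta+\g}}\le c\,\varepsilon_n^{-\theta}\|g(t)-g(s)\|_{\shc^\g},
\]
which tends to $0$ as $s\to t$. Since $\theta$ is arbitrary, this yields continuity in $\shc^{\g'}$ for every $\g'>0$. In particular $g_n\in C_T\shc^{\g}\subset C_T\shc^{(-\be)+}$.

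For (iii), I pick an intermediate index $\al\in(-\be,\g)$ and write $\g=2\theta+\al$ with $\theta=(\g-\al)/2\in(0,1)$; shrinking $\g$ if needed makes $\theta<1$. By \eqref{eq:schauder2},
\[
\sup_{t\le T}\|g_n(t)-g(t)\|_{\shc^\al}\le c\,\varepsilon_n^{\theta}\sup_{t\le T}\|g(t)\|_{\shc^\g}\to0 .
\]
This is convergence in $C_T\shc^\al$ with $\al>-\be$, hence convergence in the inductive space $C_T\shc^{(-\be)+}$.

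The only delicate point is this last step. Convergence cannot in general hold in the same space $C_T\shc^{\g}$, because smooth functions are not dense in H\"older--Zygmund spaces. Giving up a small amount of regularity, from $\g$ to $\al$, is therefore essential. The inductive topology is exactly what makes that loss harmless, which is why I take some care to land in a fixed $C_T\shc^\al$ with $\al>-\be$. Everything else is a direct application of the Schauder estimates uniformly in $t$.
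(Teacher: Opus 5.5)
Your proof is correct and follows the standard heat-semigroup argument that the paper relies on when it defers to Proposition 2.4 of \cite{issoglio_russoMK}: \eqref{eq:schauder1} with arbitrary $\theta\ge 0$, applied uniformly in $t$ thanks to continuity of $g$ on $[0,T]$, gives items (i)--(ii), and \eqref{eq:schauder2} with a small loss of regularity from $\shc^{\g}$ to $\shc^{\al}$, $\al>-\be$, gives convergence in the inductive space $C_T\shc^{(-\be)+}$. Your hedge about the normalisation of $p$ is harmless, since Definition \ref{def:heatkern} defines $P_t$ with the same kernel, so $g_n(t)=P_{1/n}g(t)$.
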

The following is an important estimate, which allows to define the so-called {\em pointwise product} of distributions,
see  (2.9) and (2.10) in \cite{issoglio_russoMK}.  
Let $\g,\al>0$
with $\g-\al>0$.
There exists a constant $c>0$ such that
\begin{equation}\label{eq:bony}
    \|f g\|_{\shc^{-\al}} \leq c\|f\|_{\shc^{\g}}\|g\|_{\shc^{-\al}},
  \end{equation}
  for all $f \in \shc^{\g}$ and $g\in \shc^{-\al}$: in particular
  the pointwise product $fg$ is a well-defined element of $\shc^{-\al}$.
  Furthermore,
for all $f\in C_T\shc^{\g}$ and $g\in C_T\shc^{-\al}$, then $ f g \in C_T\shc^{-\al}$ and
\begin{equation}\label{eq:bonyt}
    \|f g\|_{C_T\shc^{-\al}} \leq c\|f\|_{C_T\shc^{\g}}\|g\|_{C_T\shc^{-\al}}. 
  \end{equation}

 \begin{remark} \label{rmk:associativity}
   \begin{enumerate}
   \item An immediate consequence of \eqref{eq:schauder2}
     is that, for every $\gamma \in \R$ the set
     $C^\infty \cap \shc^{\gamma +}$ is dense in
     $\shc^{\gamma +}$.
     \item Let $\beta > 0, \alpha \in (0,\beta)$.
        Let $ \Gamma \in \shc^{-\alpha}, l_1,l_2 \in \shc^{\be}.$
    We have the associativity property
    \begin{equation} \label{eq:ass}
      (\Gamma  \cdot l_1) \cdot l_2 = (\Gamma \cdot l_1) \cdot l_2,
      \end{equation}
    where $\cdot$ is the pointwise product.
    Indeed we denote
    $$ \Gamma^n:= P_{1/n} \Gamma, l_i^n= P_{1/n} l^i, i = 1,2.$$
    First we observe that, being $\Gamma^n, l_1^n, l_2^n$ smooth
    we have
\begin{equation} \label{eq:assReg}
  (\Gamma^n  \cdot l^n_1)  \cdot l^n_2 = (\Gamma^n \cdot l^n _1) \cdot l^n_2,
  \end{equation}
     since the pointwise product extends the usual product of smooth functions.
     We set $\varepsilon = \frac{\beta - \alpha}{4}$.  \eqref{eq:schauder2} 
     shows that $\Gamma^n \rightarrow \Gamma$ in $\shc^{- \alpha - \varepsilon} $
     $l_i^n \rightarrow l_i, i = 1,2$, in $\shc^{\beta - \varepsilon} $.
     Taking
     into account successively \eqref{eq:bony}, we can show the
     convergence of the left-hand (resp. right-hand) side of 
     \eqref{eq:assReg} to the left-hand (resp. right-hand) side of 
     \eqref{eq:ass}.

   \end{enumerate}
  \end{remark}
  
  The same proof as in Lemma 3.2 of \cite{issoglio19} allows to establish the following. 

  \begin{lemma} \label{lm:issoglio}
If  $h\in B_T \mathcal C^{(-\beta - 1)+}$ then $\int_0^\cdot P_{\cdot-s} h(s) ds \in C_T \mathcal C^{\alpha}$ with any $\alpha < 1-\beta$.
\end{lemma}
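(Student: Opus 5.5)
The plan is to follow Lemma 3.2 of \cite{issoglio19}, and to split the statement into two parts: a uniform bound in $\shc^\alpha$ for each fixed time, and then continuity in time. Fix $\alpha<1-\beta$. By the characterisation of inductive spaces I will pick $\gamma>-\beta-1$ with $h\in B_T\shc^{\gamma}$. Shrinking $\gamma$ if necessary, I may assume $\gamma<\alpha$ and $\alpha-\gamma<2$. Set $\theta:=(\alpha-\gamma)/2$. Since $\gamma>-\beta-1>\alpha-2$, we get $\theta<1$, and this strict inequality is the only place where $\alpha<1-\beta$ enters. I then write $u(t):=\int_0^t P_{t-s}h(s)\,ds$.

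\emph{Bound for fixed $t$.} By \eqref{eq:schauder1},
\[
\|P_{t-s}h(s)\|_{\shc^{\alpha}}\le c\,(t-s)^{-\theta}\sup_{r\le T}\|h(r)\|_{\shc^\gamma}.
\]
The right-hand side is integrable in $s$ on $[0,t]$ because $\theta<1$. Hence the integral converges as a Bochner integral in $\shc^\alpha$, and it satisfies $\|u(t)\|_{\shc^\alpha}\le C t^{1-\theta}\|h\|_{B_T\shc^\gamma}$. One technical point is measurability of $s\mapsto h(s)$. To handle it, I will interpret the integral in $\shs'$ by pairing with test functions, and then use the norm bound together with the fact that $\shc^\alpha$ is a dual-type space (Fatou-type lower semicontinuity of the Besov norm). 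Alternatively, as in \cite{issoglio19}, I can work directly with the Littlewood--Paley blocks.

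\emph{Continuity.} For $0\le t<t'\le T$ I split
\[
u(t')-u(t)=\int_t^{t'}P_{t'-s}h(s)\,ds+\int_0^t\big(P_{t'-t}-I\big)P_{t-s}h(s)\,ds .
\]
For the first term, the same Schauder bound gives a contribution of order $(t'-t)^{1-\theta}$. For the second term, I choose a slightly larger exponent $\alpha'\in(\alpha,1-\beta)$ and run the argument above with $\theta'=(\alpha'-\gamma)/2<1$. Applying \eqref{eq:schauder2} with exponent $\eta=(\alpha'-\alpha)/2\in(0,1)$ gives
\[
\|(P_{t'-t}-I)P_{t-s}h(s)\|_{\shc^\alpha}\le c(t'-t)^{\eta}\|P_{t-s}h(s)\|_{\shc^{\alpha'}}\le c(t'-t)^{\eta}(t-s)^{-\theta'}\|h\|_{B_T\shc^\gamma}.
\]
The right-hand side is integrable in $s$, so the second term is $O((t'-t)^\eta)$. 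Together the two estimates give Hölder, and in particular continuous, dependence of $u$ on time in $\shc^\alpha$. That is, $u\in C_T\shc^\alpha$.

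\emph{Main obstacle.} The analytic estimates are routine. The delicate part is that I must leave a little room in the exponents (the choice of $\alpha'$) to get continuity, which is possible precisely because the assumption is the strict inequality $\alpha<1-\beta$. Beyond that, some care is needed with the measurability of $h$, which is only bounded, not continuous, in time. The plan for that point is to define the integral weakly in $\shs'$ and transfer the bounds to it, exactly as in \cite{issoglio19}.
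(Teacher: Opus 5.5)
Your proposal is correct and is essentially the paper's argument. The paper simply invokes Lemma 3.2 of \cite{issoglio19} with the regularity indices shifted, so that the Schauder singularity $(t-s)^{-\theta}$ has $\theta<1$ exactly because $\alpha<1-\beta$; you spell out those same steps, namely the uniform bound from \eqref{eq:schauder1}, followed by time continuity via the semigroup splitting and \eqref{eq:schauder2} with a small margin $\alpha'\in(\alpha,1-\beta)$. Your choice of $\gamma>-\beta-1$ from the inductive space and your care about the measurability of $h$ are harmless refinements of that proof.
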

\begin{proof}
  The proof  follows by Lemma 3.2 of \cite{issoglio19}, by re-doing all the steps with $\be$ replaced $-\be-1$ and $\al+1$ replaced with $\al$. The time-singularity arising from Schauder estimate \eqref{eq:schauder1}  depends on the difference of the
  regularity indices, which is $\al+ 1 - (-\beta)$ in the original proof, and it is  $\al  - (-\be-1)$ in this case, so they both are equal to  $\al +\be+1$.
\end{proof}

We introduce now a family of equivalent norms for the space $C_T\shc^\gamma$ when $\gamma \in \R$.

\begin{definition}\label{def:rhonorm}
    For all $\gamma\in \R$ and $\rho \geq 0$ we define the  $\rho$-norm
    \begin{equation}
        \|w\|^{(\rho)}_{C_T\shc^{\gamma}}:=\sup_{t\in [0,T]}e^{-\rho t}\|w(t)\|_{\shc^{\gamma}}.
    \end{equation}
 In the sequel, for notational reasons, we will set
    \begin{equation}\label{rho_dist}
        d_{\rho,\g}(w,z):=\|w-z\|^{(\rho)}_{C_T\shc^\gamma}.
    \end{equation}
 \end{definition}

We will denote with $S_{L^1}$ the set
 \begin{equation}\label{def:B1}
  S_{L^1}:=\left\{u:[0,T]\to L^1(\R^d)\text{ s.t. }\forall\ t\in[0,T],\ u(t,\cdot) \ge 0 \  {\rm a.e.}, \|u(t)\|_{L^1}\leq 1\right\},
   \end{equation}
 namely a set  of non-negative functions on $[0,T]$ taking values on the unitary ball in $L^1(\R^d)$.

\begin{lemma}\label{lm:B1close}
    The space $C_T\shc^{\gamma}\cap S_{L^1}$ is closed in $C_T\shc^{\gamma} $ for $\gamma\in(0,1)$.
    \begin{proof}
      Let $(u_n)\subseteq C_T\shc^{\gamma}\cap S_{L^1}$
be a sequence converging to $u \in C_T\shc^{\gamma}$.
It is obviously non-negative being a pointwise limit of non-negative functions.
      We are left to prove that for all $t\in [0,T]$
        \begin{equation*}
            \|u(t)\|_{L^1}=\int_{\R^d} u(t,x) dx \leq 1.
        \end{equation*}
Using Fatou's lemma we have for all $t\in[0,T]$
\begin{equation*}
    \|u(t)\|_{L^1}=\int_{\R^d}\liminf_{n\to\infty} u_n(t,x) dx\leq \liminf_{n\to\infty}\int_{\R^d} u_n(t,x) dx=1,
\end{equation*}
which concludes the proof.
    \end{proof}
  \end{lemma}
  
\subsection{Useful continuity results}

In this section we prove some useful bounds related to the non-linear non-local  term that appears in the PDE \eqref{eq:FPmod}.
\begin{lemma}\label{lm:prep}
	Let $\gamma \in (0,1)$ and $K\in \shp(\R^d)$.  Let us suppose Assumption \ref{ass:F}. Then we have the following.
	\begin{itemize}
		\item[(i)] $K*f\in\shc^\gamma$ and
		\begin{equation}\label{eq:Kstarv}
			\|K*f\|_{\shc^\gamma}\leq \|f\|_{\shc^\gamma},  \forall f \in\shc^\gamma.  
		\end{equation}
		\item[(ii)] For all $\ell_1,\ell_2\in\shc^\gamma,  F(\ell_i)\in\shc^\gamma$ for $i=1,2$. Moreover we have
		\begin{equation}\label{eq:LipF}
			\|F(\ell_1)-F(\ell_2)\|_{\shc^\gamma}\leq C_F\|\ell_1-\ell_2\|_{\shc^\gamma}
		\end{equation}
		and
			\begin{equation}\label{eq:Fl_al}
			\|F(\ell_1)\|_{\shc^\gamma}\leq C_F(1+\|\ell_1\|_{\shc^\gamma}).
		\end{equation}
	\end{itemize}
\end{lemma}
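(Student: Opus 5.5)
For (i), I will work directly with the H\"older--Zygmund norm \eqref{eq:holdnorm}, which is legitimate since $\gamma\in(0,1)$. Because $K$ is a probability measure, $|K*f(x)|\le \int |f(x-z)|K(dz)\le \|f\|_\infty$. For the seminorm,
\[
|K*f(x)-K*f(y)|\le \int |f(x-z)-f(y-z)|\,K(dz)\le \|f\|_\gamma |x-y|^\gamma .
\]
Adding the two bounds gives \eqref{eq:Kstarv} with constant exactly $1$.

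For (ii), I first treat \eqref{eq:Fl_al}. Assumption \ref{ass:F} gives $|F(r)|\le |F(0)|+\|F'\|_\infty|r|$, which controls the sup part. The mean value theorem gives $|F(\ell_1(x))-F(\ell_1(y))|\le \|F'\|_\infty|\ell_1(x)-\ell_1(y)|$, which controls the seminorm. Together these yield $\|F(\ell_1)\|_{\shc^\gamma}\le C_F(1+\|\ell_1\|_{\shc^\gamma})$ with $C_F=\max(|F(0)|,\|F'\|_\infty)$, and in particular $F(\ell_i)\in\shc^\gamma$.

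For \eqref{eq:LipF}, set $a:=\ell_1-\ell_2$ and $G(x):=\int_0^1F'(\ell_2(x)+\theta a(x))\,d\theta$, so that $F(\ell_1)-F(\ell_2)=aG$. The sup part is immediate: $\|aG\|_\infty\le\|F'\|_\infty\|a\|_\infty$. For the seminorm I split
\[
a(x)G(x)-a(y)G(y)=(a(x)-a(y))G(x)+a(y)(G(x)-G(y)).
\]
The first term is bounded by $\|F'\|_\infty\|a\|_\gamma|x-y|^\gamma$. For the second term I split into two cases. When $|x-y|\ge1$, the crude bound $|G(x)-G(y)|\le 2\|F'\|_\infty$ gives at most $2\|F'\|_\infty\|a\|_\infty|x-y|^\gamma$, which is acceptable. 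When $|x-y|<1$, I would use that $F'$ is Lipschitz, giving
\[
|G(x)-G(y)|\le \mathrm{Lip}(F')\bigl(\|\ell_1\|_\gamma+\|\ell_2\|_\gamma\bigr)|x-y|^\gamma .
\]

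This near-diagonal term is the main obstacle, and I expect it to be a genuine one: with this route the resulting constant depends on the H\"older seminorms of $\ell_1,\ell_2$, not only on $F$. Interpolating between the two bounds on $|G(x)-G(y)|$ does not remove this dependence, because the factor multiplying $|x-y|^\gamma$ comes only from the oscillation of $G$, and that oscillation is controlled only by $\|\ell_i\|_\gamma$. I expect this dependence cannot be removed for non-affine $F$. A test case is $F=\sin$, $a\equiv\epsilon$, and $\ell_2$ large and rapidly oscillating. Then $\|a\|_{\shc^\gamma}=\epsilon$, while $\|\sin(\ell_2+\epsilon)-\sin\ell_2\|_\gamma\approx\epsilon\|\cos\ell_2\|_\gamma$, and this can be made arbitrarily large by choosing $\ell_2$ suitably.

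So the plan is to prove (i), \eqref{eq:Fl_al}, and the sup-norm and far-diagonal parts of \eqref{eq:LipF} with a constant depending only on $F$. For the near-diagonal part, the honest outcome of this plan is \eqref{eq:LipF} with $C_F$ replaced by $C_F(1+\|\ell_1\|_{\shc^\gamma}+\|\ell_2\|_{\shc^\gamma})$. The fixed-constant version would hold, for example, if $F$ is affine, or if \eqref{eq:LipF} is only required for $\ell_i$ ranging in a bounded subset of $\shc^\gamma$, with $C_F$ then also depending on that bound. I would flag this before using \eqref{eq:LipF} later in the paper.
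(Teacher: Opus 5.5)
Your proofs of (i) and of \eqref{eq:Fl_al} follow the paper's own route: the H\"older--Zygmund norm \eqref{eq:holdnorm}, $K(\R^d)=1$, and the mean value theorem.

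On \eqref{eq:LipF} your diagnosis is correct, and it is the paper's proof that breaks. It bounds the seminorm in one line, $\|F(\ell_1)-F(\ell_2)\|_\gamma\le\|F'\|_\infty\|\ell_1-\ell_2\|_\gamma$. The mean value theorem only controls first differences such as $F(\ell_1(x))-F(\ell_2(x))$ or $F(\ell_i(x))-F(\ell_i(y))$. It does not control the mixed difference $F(\ell_1(x))-F(\ell_2(x))-F(\ell_1(y))+F(\ell_2(y))$. Already for $\ell_1-\ell_2\equiv\epsilon\neq0$ the claimed right-hand side vanishes, while the left-hand side is $\|F(\ell_2+\epsilon)-F(\ell_2)\|_\gamma$, which is generally nonzero.

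Your example makes this quantitative. Take $F=\sin$, $\ell_2(x)=\pi\sin(\lambda x_1)$ and $\ell_1=\ell_2+\epsilon$ with $0<\epsilon<\pi$. Then $\|\ell_1-\ell_2\|_{\shc^\gamma}=\epsilon$. Comparing the points $x_1=0$ and $x_1=\pi/(2\lambda)$ gives $\|F(\ell_1)-F(\ell_2)\|_\gamma\ge 2\sin(\epsilon)\,(2\lambda/\pi)^\gamma$, which tends to $\infty$ as $\lambda\to\infty$. The same construction works whenever $F'$ is not constant: let $\ell_2$ oscillate rapidly between two points where $F'$ takes different values. So under Assumption \ref{ass:F}, \eqref{eq:LipF} with $C_F$ depending only on $F$ holds exactly when $F$ is affine.

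Your replacement $\|F(\ell_1)-F(\ell_2)\|_{\shc^\gamma}\le C_F(1+\|\ell_1\|_{\shc^\gamma}+\|\ell_2\|_{\shc^\gamma})\|\ell_1-\ell_2\|_{\shc^\gamma}$ is the correct statement. It has the same structure as the paper's own Lemma \ref{lm:prep2}. The case split at $|x-y|=1$ is superfluous, since $|G(x)-G(y)|\le\frac12\mathrm{Lip}(F')(\|\ell_1\|_\gamma+\|\ell_2\|_\gamma)|x-y|^\gamma$ holds for all $x,y$.

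The error has limited consequences:
\begin{itemize}
\item \eqref{eq:Fl_al} only needs the case $\ell_2\equiv0$, where the paper's one-line bound is valid.
\item The only other use of \eqref{eq:LipF} is the time continuity of $t\mapsto F(K*f(t))$ in Corollary \ref{cr:prep}. This survives with your constant, because $\sup_t\|K*f(t)\|_{\shc^\gamma}\le\|f\|_{C_T\shc^\gamma}$.
\end{itemize}
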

\begin{proof}
	 Let us fix $f\in\shc^\gamma$. 
	We start by proving that 
	$K*f$ is an element of $\shc^\gamma$. From direct calculations, using \eqref{eq:holdnorm} we get 
	\begin{align*}
		\|K*f\|_{\shc^\gamma}=&\|K*f\|_{\infty}+\|K*f\|_{\gamma}\\
		\notag
		=&\sup_{x\in\R^d}\left| \int_{\R^d} f(x-z)K(dz) \right|+\sup_{\underset{x\not=y}{x,y\in\R}}\frac{|K*f(x)-K*f(y)|}{|x-y|^\gamma}\\
		\notag
		\leq&  \int_{\R^d} K(dz) \|f\|_{\infty}+\sup_{\underset{x\not=y}{x,y\in\R^d}}\frac{\left| \int_{\R^d} [f(x-z)-f(y-z)]K(dz)\right|}{|x-y|^\gamma}\\
		\notag
		\leq&   \|f\|_{\infty}+ \int_{\R^d} \sup_{\underset{x\not=y}{x,y\in\R^d}}\frac{\left|f(x-z)-f(y-z)\right|}{|x-y|^\gamma}K(dz)\\
		\notag
		=&\|f\|_{\infty}+ \|f\|_{\gamma}=\|f\|_{\shc^\gamma}.
	\end{align*}
	Next we prove \eqref{eq:LipF}, making use again of \eqref{eq:holdnorm} for the norm of the difference between $F(\ell_1)$ and $F(\ell_2)$ and differentiability assumption on $F$.
	\begin{align*}
		\|F(\ell_1)-F(\ell_2)\|_{\shc^\gamma}=&	\|F(\ell_1)-F(\ell_2)\|_{\infty}+	\|F(\ell_1)-F(\ell_2)\|_{\gamma}\\
		\leq & \|F'\|_{\infty}(\|\ell_1-\ell_2\|_{\infty}+\|\ell_1-\ell_2\|_{\gamma})\\
		=&\|F'\|_{\infty}\|\ell_1-\ell_2\|_{\shc^\gamma}.
	\end{align*} 
	In particular, the latter bound, together with the triangle inequality,
 implies  that  $F (\ell_1) \in \shc^\gamma$ and \eqref{eq:Fl_al}.
 
\end{proof}
\begin{corollary}\label{cr:prep}
  Let suppose Assumption \ref{ass:F}  and $K\in\shp(\R^d)$. The following holds. 
	\begin{itemize}
		\item[(i)] $K*f\in C_T\shc^\gamma$ and
	\begin{equation}\label{eq:Kstarvt}
	\|K*f\|_{C_T\shc^\gamma}\leq \|f\|_{C_T\shc^\gamma},
      \end{equation}
      for every $f\in C_T\shc^\gamma$.
\item[(ii)] $F(K*f)\in C_T\shc^\gamma$ and 
\begin{equation}\label{eq:Fl_alt}
	\|F(K*f)\|_{C_T\shc^\gamma}\leq C_F(1+\|f\|_{C_T\shc^\gamma}),  \forall f \in C_T\shc^\gamma.
\end{equation}
\end{itemize}
\end{corollary}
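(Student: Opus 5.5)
The plan is to apply Lemma \ref{lm:prep} pointwise in time to obtain the norm bounds, and then to use the Lipschitz estimates in that lemma to upgrade membership in $\shc^\gamma$ at each fixed time to continuity in time.

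For (i), fix $f\in C_T\shc^\gamma$. For each $t\in[0,T]$, item (i) of Lemma \ref{lm:prep} gives $K*f(t)\in\shc^\gamma$ and $\|K*f(t)\|_{\shc^\gamma}\le\|f(t)\|_{\shc^\gamma}$. Taking the supremum over $t$ yields \eqref{eq:Kstarvt}, provided we know that $t\mapsto K*f(t)$ is continuous in $\shc^\gamma$. For this we use linearity of the convolution together with \eqref{eq:Kstarv} applied to $f(t)-f(s)$:
\[
\|K*f(t)-K*f(s)\|_{\shc^\gamma}=\|K*(f(t)-f(s))\|_{\shc^\gamma}\le \|f(t)-f(s)\|_{\shc^\gamma}.
\]
The right-hand side tends to $0$ as $s\to t$, by continuity of $f$. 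Hence $K*f\in C_T\shc^\gamma$.

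For (ii), set $\ell(t):=K*f(t)$, which belongs to $C_T\shc^\gamma$ by (i). By item (ii) of Lemma \ref{lm:prep}, $F(\ell(t))\in\shc^\gamma$ for each $t$. Continuity in time follows from \eqref{eq:LipF}:
\[
\|F(\ell(t))-F(\ell(s))\|_{\shc^\gamma}\le C_F\|\ell(t)-\ell(s)\|_{\shc^\gamma}\to 0 \quad \text{as } s\to t.
\]
For the bound, \eqref{eq:Fl_al} combined with \eqref{eq:Kstarv} gives
\[
\|F(K*f(t))\|_{\shc^\gamma}\le C_F\bigl(1+\|K*f(t)\|_{\shc^\gamma}\bigr)\le C_F\bigl(1+\|f(t)\|_{\shc^\gamma}\bigr).
\]
Taking the supremum over $t\in[0,T]$ gives \eqref{eq:Fl_alt}.

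There is no real obstacle here. The one point needing care is that the Lipschitz estimate \eqref{eq:LipF} is stated in the full $\shc^\gamma$-norm, and this is exactly what transfers continuity in time; it holds because the map $F$ is globally Lipschitz at the level of $\shc^\gamma$.
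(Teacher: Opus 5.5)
Your argument follows the paper's proof step for step: apply \eqref{eq:Kstarv} and \eqref{eq:Fl_al} at each fixed time and take the supremum in $t$, get time continuity of $K*f$ from linearity of the convolution, and get time continuity of $F(K*f)$ from \eqref{eq:LipF}.

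One correction, which applies to the paper's proof as well. Your closing justification, that $\ell\mapsto F(\ell)$ is globally Lipschitz on $\shc^\gamma$, is false in general, so \eqref{eq:LipF} cannot be used as stated. Take $d=1$, $F=\sin$, $\ell_2(x)=\pi\sin(Nx)$ and $\ell_1=\ell_2+\varepsilon$. Then $\|\ell_1-\ell_2\|_{\shc^\gamma}=\varepsilon$. Comparing $x=0$ with $x=\pi/(2N)$ gives $\|F(\ell_1)-F(\ell_2)\|_{\gamma}\ge 2\sin(\varepsilon)\,(2N/\pi)^\gamma$, which is unbounded in $N$.

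The conclusion you need still holds via a local estimate. Write $F(\ell_1)-F(\ell_2)=G\,(\ell_1-\ell_2)$ with $G=\int_0^1F'(a\ell_1+(1-a)\ell_2)\,da$, and let $L$ be the Lipschitz constant of $F'$. You then get
\[
\|F(\ell_1)-F(\ell_2)\|_{\shc^\gamma}\le\Big(\|F'\|_\infty+\tfrac{L}{2}\big(\|\ell_1\|_{\shc^\gamma}+\|\ell_2\|_{\shc^\gamma}\big)\Big)\|\ell_1-\ell_2\|_{\shc^\gamma}.
\]
Apply this with $\ell_1=K*f(t)$ and $\ell_2=K*f(s)$; by part (i) both norms are at most $\|f\|_{C_T\shc^\gamma}$. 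This yields $\|F(K*f(t))-F(K*f(s))\|_{\shc^\gamma}\le(\|F'\|_\infty+L\|f\|_{C_T\shc^\gamma})\|f(t)-f(s)\|_{\shc^\gamma}$, hence continuity in time. This is exactly where the assumption that $F'$ is Lipschitz is needed. The growth bound \eqref{eq:Fl_al} is unaffected, since it only compares $F(\ell_1)$ with the constant $F(0)$.
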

\begin{proof}
The map $t\mapsto K*f(t)$ takes values in $\shc^\gamma$ from {\it item (i)} of Lemma \ref{lm:prep} and it is continuous
because   
 for $0\leq s\leq t\leq T$, we obtain
	\begin{equation*}
		\|K*f(t)-K*f(s)\|_{\shc^\gamma}=\|K*(f(t)-f(s))\|_{\shc^\gamma}\leq C_F\|f(t)-f(s)\|_{\shc^\gamma},
              \end{equation*}
from linearity of convolution and \eqref{eq:Kstarv} with $f=f(t)-f(s)$.
Now \eqref{eq:Kstarvt} follows now directly from  \eqref{eq:Kstarv} taking the supremum in time.
 $F(K*f)\in C_T\shc^\gamma$, since
 for every $s,t\in[0,T]$, we can use \eqref{eq:LipF} with $\ell_1=K*f(t)$ and $\ell_2=K*f(s)$ and \eqref{eq:Kstarv} with $f = f(t)-f(s)$ one gets
	\begin{equation*}
		\|F(K*f(t))-F(K*f(s))\|_{\shc^\gamma}\leq C_F\|K*f(t)-K*f(s)\|_{\shc^\gamma}\leq C_F\|f(t)-f(s)\|_{\shc^\gamma},
	\end{equation*}
        and continuity is proven. 
        The inequality \eqref{eq:Fl_alt} follows from \eqref{eq:Fl_al} with $\ell_1=K * f(t)$ for all $t\in[0,T]$ and \eqref{eq:Kstarv}
        with $f = f(t)$, taking finally the supremum 
  for  $t \in [0,T]$.
\end{proof}

\begin{corollary} \label{lm:BNcont}
	Let Assumption \ref{ass:F} hold and let $K\in \shp(\R^d)$. If  $f,g \in C_T \shc^\gamma$ for some $\gamma \in(0,1)$, then $gF(K*f)\in C_T\shc^\gamma$. In particular
	\begin{equation}
		\|gF(K*f)\|_{C_T\shc^\gamma}\leq  \|g\|_{C_T\shc^\gamma}C_F(1+ \|f\|_{C_T\shc^\gamma}).
	\end{equation}
\end{corollary}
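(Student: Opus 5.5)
The plan is to combine Corollary \ref{cr:prep} with the fact that, for $\gamma\in(0,1)$, the H\"older--Zygmund space $\shc^\gamma$ with the norm \eqref{eq:holdnorm} is a Banach algebra: for $u,w\in\shc^\gamma$,
\[
\|uw\|_{\shc^\gamma}\le \|u\|_{\shc^\gamma}\|w\|_{\shc^\gamma}.
\]
To check this, write $\|uw\|_\infty\le\|u\|_\infty\|w\|_\infty$. For the seminorm, use the decomposition
\[
u(x)w(x)-u(y)w(y)=u(x)\big(w(x)-w(y)\big)+w(y)\big(u(x)-u(y)\big),
\]
which gives $\|uw\|_\gamma\le \|u\|_\infty\|w\|_\gamma+\|w\|_\infty\|u\|_\gamma$. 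Adding the two bounds, the sum is dominated by $(\|u\|_\infty+\|u\|_\gamma)(\|w\|_\infty+\|w\|_\gamma)$. Equivalently, one could invoke \eqref{eq:bony}-type paraproduct estimates for positive indices, but the elementary computation with the explicit norm is cleaner and gives constant $1$, matching the claimed inequality.

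Next, fix $f,g\in C_T\shc^\gamma$ and set $h:=F(K*f)$. By Corollary \ref{cr:prep}(ii), $h\in C_T\shc^\gamma$ and $\|h\|_{C_T\shc^\gamma}\le C_F(1+\|f\|_{C_T\shc^\gamma})$. For each $t$, the algebra property gives $g(t)h(t)\in\shc^\gamma$. For time continuity, split
\[
g(t)h(t)-g(s)h(s)=g(t)\big(h(t)-h(s)\big)+\big(g(t)-g(s)\big)h(s),
\]
and apply the algebra inequality to each term. This yields
\[
\|g(t)h(t)-g(s)h(s)\|_{\shc^\gamma}\le \|g\|_{C_T\shc^\gamma}\|h(t)-h(s)\|_{\shc^\gamma}+\|g(t)-g(s)\|_{\shc^\gamma}\|h\|_{C_T\shc^\gamma}.
\]
The right-hand side tends to $0$ as $s\to t$ by continuity of $g$ and $h$. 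Hence $gF(K*f)\in C_T\shc^\gamma$.

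Finally, taking the supremum over $t$ in $\|g(t)h(t)\|_{\shc^\gamma}\le\|g(t)\|_{\shc^\gamma}\|h(t)\|_{\shc^\gamma}$ and inserting \eqref{eq:Fl_alt} gives the stated bound. The only step requiring any care is the algebra inequality, specifically keeping the constant equal to $1$ (as opposed to a generic $c$). The computation above shows this holds with the norm \eqref{eq:holdnorm}. Otherwise, the constant $c$ can be absorbed into $C_F$, which is allowed to vary.
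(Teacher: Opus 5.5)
Your proof is correct and follows the paper's route: use Corollary \ref{cr:prep}(ii) together with \eqref{eq:Fl_alt} for $F(K*f)$, then the algebra property of $\shc^\gamma$, then a supremum in time. The only difference is that you prove the product estimate by hand with the norm \eqref{eq:holdnorm}, getting constant $1$, and you check time continuity explicitly; the paper instead cites Corollary 2.86 of \cite{bahouri}, whose constant $c$ is silently absorbed into $C_F$.
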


\begin{proof}
  Thanks {\it item (ii)} of Corollary \ref{cr:prep}  we have that $F(K*f) \in C_T\shc^\gamma$ and so $gF(K*f) \in C_T\shc^\gamma$
  since $C_T\shc^\gamma$ is an algebra.
Using
Corollary 2.86 of \cite{bahouri}, with $s= \gamma, p = r = \infty$,
states
	\begin{equation}\label{eq:bahouri}
		\| g F (K*f) \|_{\shc^\gamma} \leq c \|g\|_{\shc^\gamma} \|F (K*f)\|_{\shc^\gamma}.
	\end{equation}
This directly implies that,
for each    $t\in[0,T]$,
one has
	\begin{equation*}
		\|g(t)F(K*f(t))\|_{\shc^\gamma}\leq  \|g(t)\|_{\shc^\gamma}C_F(1+ \|f(t)\|_{\shc^\gamma}) ,
	\end{equation*}
	and taking the sup over $[0,T]$ we can conclude. 
      \end{proof}
      
\begin{corollary}\label{lm:prep0bis}
	 Let $\gamma \in (0,1)$.
	If $f,g\in\shc^\gamma$,   $K\in\shp(\R^d)$ and Assumption \ref{ass:F} hold, then $gF(K*f)\in \shc^\gamma$ and 
	\begin{equation}\label{eq:fg-alpha}
		\|gF(K*f)\|_{\shc^\gamma} \leq  \|g\|_{\shc^\gamma} C_{F}(1+\|f\|_{\shc^\gamma}).
	\end{equation}
\end{corollary}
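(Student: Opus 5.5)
This statement is the time-independent counterpart of Corollary \ref{lm:BNcont}, and I would prove it by the same chain of estimates applied at a single fixed point rather than uniformly in $t$. Alternatively, one could regard $f$ and $g$ as constant-in-time elements of $C_T\shc^\gamma$ and read the bound off Corollary \ref{lm:BNcont} directly. However, the direct argument is just as short and makes the constants transparent.

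\textbf{Step 1: regularity of $F(K*f)$.} By item (i) of Lemma \ref{lm:prep}, $K*f\in\shc^\gamma$ with $\|K*f\|_{\shc^\gamma}\le\|f\|_{\shc^\gamma}$. Applying item (ii) of Lemma \ref{lm:prep} with $\ell_1=K*f$, namely \eqref{eq:Fl_al}, gives $F(K*f)\in\shc^\gamma$ and
\[
\|F(K*f)\|_{\shc^\gamma}\le C_F\bigl(1+\|K*f\|_{\shc^\gamma}\bigr)\le C_F\bigl(1+\|f\|_{\shc^\gamma}\bigr).
\]

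\textbf{Step 2: the product.} Since $\shc^\gamma$ with $\gamma\in(0,1)$ is the H\"older--Zygmund space, it is an algebra. The algebra estimate \eqref{eq:bahouri} (Corollary 2.86 of \cite{bahouri} with $s=\gamma$, $p=r=\infty$) yields
\[
\|gF(K*f)\|_{\shc^\gamma}\le c\,\|g\|_{\shc^\gamma}\|F(K*f)\|_{\shc^\gamma}.
\]
Combining this with Step 1 gives \eqref{eq:fg-alpha}, with the universal constant $c$ absorbed into $C_F$, in line with the convention that $C_F$ may change from line to line.

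\textbf{The only delicate point.} The one thing to check is this absorption of the algebra constant $c$ into $C_F$, since the statement displays no separate $c$. If one wants an explicit constant, the H\"older norm \eqref{eq:holdnorm} gives directly
\[
\|hk\|_{\infty}+\|hk\|_{\gamma}\le\|h\|_\infty\|k\|_\infty+\|h\|_\infty\|k\|_\gamma+\|h\|_\gamma\|k\|_\infty\le\|h\|_{\shc^\gamma}\|k\|_{\shc^\gamma},
\]
so $c=1$ suffices. With that, the proof is complete and needs no further obstacle to be overcome.
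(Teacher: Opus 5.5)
Your proof is correct and is essentially the paper's argument. The corollary is the fixed-time version of Corollary \ref{lm:BNcont}, whose proof combines exactly \eqref{eq:Kstarv}, \eqref{eq:Fl_al} and the algebra estimate \eqref{eq:bahouri}; your check that the H\"older norm \eqref{eq:holdnorm} gives the algebra constant $c=1$ is a useful addition, since it justifies absorbing $c$ into $C_F$, which the paper does without comment.
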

At this point we establish further estimates supposing Assumption \ref{ass:K},
in term of the $L^1$-norm of $f$.
\begin{lemma}\label{lm:prep0}
  Let $\gamma \in (0,1)$ and Assumptions \ref{ass:K} and \ref{ass:F} hold. Let $g\in\shc^\gamma$ and $f\in \shc^\gamma\cap L^1$. Then we have 
\begin{equation}\label{eq:fg-L1}
\|gF(K*f)\|_{\shc^\gamma}\leq \|g\|_{\shc^\gamma}C_{F,K}(1+\|f\|_{L^1}).
\end{equation}
\end{lemma}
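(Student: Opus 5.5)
The plan is to bound $\|F(K*f)\|_{\shc^\gamma}$ in terms of $\|f\|_{L^1}$ instead of $\|f\|_{\shc^\gamma}$. Once that is done, the product estimate \eqref{eq:bahouri} (Corollary 2.86 of \cite{bahouri}) gives $\|gF(K*f)\|_{\shc^\gamma}\le c\|g\|_{\shc^\gamma}\|F(K*f)\|_{\shc^\gamma}$, which finishes the proof. The constant $c$ is absorbed into $C_{F,K}$.

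The key point is that the regularity now comes from $K$, not from $f$. By Assumption \ref{ass:K}, $K\in\shc^{\be+}$, so $K\in\shc^{\al}$ for some $\al>\be$. I may also take $\al\ge\gamma$ by enlarging it, since $\shc^{\al'}\subseteq\shc^\al$ for $\al'>\al$. Here is the choice I would try first. If $\gamma\le\al$, then $K\in\shc^\gamma$ through the embedding. If instead $\gamma>\al$, which is possible because $\gamma\in(0,1)$ while $K$ is only known to lie in $\shc^{\be+}$, then I would replace $\|K\|_{\shc^\gamma}$ by $\|K\|_{\shc^{\al}}$ at the cost of an interpolation. This case distinction is the main obstacle.

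Assuming $K\in\shc^\gamma$, the estimate is a direct computation, symmetric to Lemma \ref{lm:prep}(i) with the roles of $K$ and $f$ exchanged. For the sup norm,
$|K*f(x)|\le\int|K(x-z)||f(z)|dz\le\|K\|_\infty\|f\|_{L^1}$. For the H\"older seminorm,
$|K*f(x)-K*f(y)|\le\int|K(x-z)-K(y-z)||f(z)|dz\le\|K\|_\gamma|x-y|^\gamma\|f\|_{L^1}$.
Together these give $\|K*f\|_{\shc^\gamma}\le\|K\|_{\shc^\gamma}\|f\|_{L^1}$. Then \eqref{eq:Fl_al} with $\ell_1=K*f$ yields $\|F(K*f)\|_{\shc^\gamma}\le C_F(1+\|K\|_{\shc^\gamma}\|f\|_{L^1})\le C_{F,K}(1+\|f\|_{L^1})$.

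If the case $\gamma>\al$ cannot be avoided, I would fall back on interpolation. The Hölder quotient of $K*f$ is at most $2\|K\|_\infty\|f\|_{L^1}|x-y|^{-\gamma}$ when $|x-y|\ge1$. When $|x-y|<1$ there is no such uniform bound, so the clean route needs $K\in\shc^\gamma$. I expect the lemma is intended for $\gamma\le\be<\al$, which is the regime where it is used later: $\shc^\be$ on the $L^1$ unit ball. So I would state the proof for $\gamma$ not exceeding the regularity of $K$, noting this is exactly the regime required.
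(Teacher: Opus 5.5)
Your proof is essentially the paper's: the product estimate \eqref{eq:bahouri} reduces everything to $\|F(K*f)\|_{\shc^\gamma}$, which is then bounded via $\|K*f\|_\infty\le\|K\|_\infty\|f\|_{L^1}$, $\|K*f\|_\gamma\le\|K\|_\gamma\|f\|_{L^1}$ and the linear growth of $F$ (the paper writes \eqref{eq:Fl_al} out inline), with $\|K\|_{\shc^\gamma}$ absorbed into $C_{F,K}$. Your caveat is well founded, because the paper's proof tacitly uses $K\in\shc^\gamma$ in the same way and the bound really fails whenever $K\notin\shc^\gamma$ (take $g\equiv 1$, $F(x)=x$, $f=p_\varepsilon$ and let $\varepsilon\to0$); restricting to $\gamma$ not exceeding the H\"older index of $K$ (in particular $\gamma=\be$, the only case used later) is therefore the right call, but you should discard the remark about ``enlarging'' $\al$, since $\shc^{\al'}\subseteq\shc^{\al}$ only lets you lower the index, and the interpolation idea, which cannot rescue the case $\gamma>\al$.
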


\begin{proof}

We already know that $gF(K*f)\in\shc^\gamma$ from Corollary \ref{lm:prep0bis}. For $f\in\shc^\gamma\cap L^1$, through direct calculations,
taking into account \eqref{eq:bahouri}, 
  we obtain
 \begin{align*}
\|gF(K*f)\|_{\shc^\gamma} \leq& c\|g\|_{\shc^\gamma}\|F(K*f)\|_{\shc^\gamma}\\
=&c\|g\|_{\shc^\gamma} \left(\|F(K*f)\|_{\infty}+ \sup_{\underset{x\not=y}{x,y\in\R^d}}\frac{|F(K*f(x))-F(K*f(y))|}{|x-y|^\gamma}\right)\\
\leq&c\|g\|_{\shc^\gamma} \left(\vert F(0) \vert+\|F'\|_{\infty}\|K*f\|_{\infty}+ \|F'\|_{\infty} \sup_{\underset{x\not=y}{x,y\in\R^d}}\frac{|K*f(x)-K*f(y)|}{|x-y|^\gamma}\right)\\
\leq & \|g\|_{\shc^\gamma}\left(\vert F(0) \vert+\|F'\|_{\infty}\|K*f\|_{\infty}+\|F'\|_{\infty} \sup_{\underset{x\not=y}{x,y\in\R^d}}\int_{\R^d}\frac{|K(x-z)-K(y-z)|}{|x-y|^\gamma}|f(z)|dz \right)\\
\leq & c\|g\|_{\shc^\gamma}\left(\vert F(0) \vert+\|F'\|_{\infty}\|K\|_{\infty}\|f\|_{L^1}+\|F'\|_{\infty}\|K\|_{\gamma}\|f\|_{L^1}\right)\\
= & c \|g\|_{\shc^\gamma}\left(\vert F(0) \vert+\|F'\|_{\infty}\|K\|_{\shc^\gamma}\|f\|_{L^1}\right)\\
\leq & c \|g\|_{\shc^\gamma}C_{F,K}\left(1+\|f\|_{L^1}\right).\qedhere
\end{align*}

\end{proof}


\begin{lemma}\label{lm:prep2}
	Let $\g\in(0,1)$ and Assumptions \ref{ass:K} and \ref{ass:F} hold. Let also $f,g\in \shc^\gamma\cap L^1$. Then we have 
	\begin{equation}\label{eq:FC_al}
		\|F(K*f)-F(K*g)\|_{\shc^\gamma}\leq C_{F,K}(1+\|f+g\|_{L^1})\|f-g\|_{\shc^\gamma}.
	\end{equation}
\end{lemma}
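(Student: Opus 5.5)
We want to bound $\|F(K*f)-F(K*g)\|_{\shc^\gamma}$. Set $\ell_1=K*f$ and $\ell_2=K*g$, and write the difference through the fundamental theorem of calculus:
\[
F(\ell_1)-F(\ell_2)=(\ell_1-\ell_2)\,H,\qquad H:=\int_0^1 F'\big(\lambda\ell_1+(1-\lambda)\ell_2\big)\,d\lambda .
\]
Since $\shc^\gamma$ is an algebra, \eqref{eq:bahouri} gives
\[
\|F(\ell_1)-F(\ell_2)\|_{\shc^\gamma}\le c\,\|\ell_1-\ell_2\|_{\shc^\gamma}\,\|H\|_{\shc^\gamma}.
\]
For the first factor, \eqref{eq:Kstarv} applied to $f-g$ gives $\|\ell_1-\ell_2\|_{\shc^\gamma}=\|K*(f-g)\|_{\shc^\gamma}\le\|f-g\|_{\shc^\gamma}$. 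It then remains to show $\|H\|_{\shc^\gamma}\le C_{F,K}(1+\|f+g\|_{L^1})$.

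The sup part of $H$ is easy: $\|H\|_\infty\le\|F'\|_\infty$. For the Hölder seminorm, we use that $F'$ is Lipschitz with constant $L_{F'}$. For $x\neq y$,
\[
|H(x)-H(y)|\le L_{F'}\int_0^1\Big(\lambda|\ell_1(x)-\ell_1(y)|+(1-\lambda)|\ell_2(x)-\ell_2(y)|\Big)d\lambda .
\]
We estimate $\ell_i$ in terms of the $L^1$ norm, as in the proof of Lemma \ref{lm:prep0}:
\[
|K*f(x)-K*f(y)|\le \int|K(x-z)-K(y-z)|\,|f(z)|\,dz\le \|K\|_\gamma\,\|f\|_{L^1}\,|x-y|^\gamma .
\]
Hence $\|H\|_\gamma\le \tfrac12 L_{F'}\|K\|_\gamma(\|f\|_{L^1}+\|g\|_{L^1})$.

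The main obstacle is turning $\|f\|_{L^1}+\|g\|_{L^1}$ into $\|f+g\|_{L^1}$, which is not true for signed functions. I would resolve it using the paper's standing use of the lemma: there $f,g$ are non-negative, being elements of $S_{L^1}$, and then $\|f\|_{L^1}+\|g\|_{L^1}=\|f+g\|_{L^1}$. I would make this non-negativity explicit in the proof.

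If non-negativity is not to be assumed, there is an alternative that uses the modulus $|K|*|f|$ and the bound $\|f\|_{L^1}+\|g\|_{L^1}$. That still gives the same structural estimate, and the right-hand side of \eqref{eq:FC_al} would then be read with $\|f\|_{L^1}+\|g\|_{L^1}$. I would note that this suffices for the later contraction argument.

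Combining the two factors gives
\[
\|F(K*f)-F(K*g)\|_{\shc^\gamma}\le c\,\big(\|F'\|_\infty+L_{F'}\|K\|_{\shc^\gamma}\|f+g\|_{L^1}\big)\,\|f-g\|_{\shc^\gamma},
\]
which is \eqref{eq:FC_al} with $C_{F,K}=c\max(\|F'\|_\infty,\,L_{F'}\|K\|_{\shc^\gamma})$. Here $\|K\|_{\shc^\gamma}<\infty$ because $K\in\shc^{\beta+}$, assuming $\gamma\le\beta$ or that $K$ has enough regularity, as in Lemma \ref{lm:prep0}.
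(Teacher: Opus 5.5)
Your argument follows the paper's route. The paper also factors $F(K*f)-F(K*g)=K*(f-g)\,H$ with $H=\int_0^1F'(aK*f+(1-a)K*g)\,da$, splits the H\"older increment by the product rule (your algebra estimate), and uses $|K*f(x)-K*f(y)|\le\|K\|_\gamma\|f\|_{L^1}|x-y|^\gamma$. Where you stop, the paper reaches $\|K*(f+g)\|_\gamma$ by bounding $|H(x)-H(y)|$ by $\frac12\|F''\|_\infty|A+B|$, where $A,B$ are the increments of $K*f$ and $K*g$ between $y$ and $x$. That step is not valid when $A$ and $B$ have opposite signs, which can happen even for $f,g\ge0$: for $A=-B\neq0$ the bound vanishes, while $H(x)\neq H(y)$ in general. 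The correct bound is $\frac12L_{F'}(|A|+|B|)$, which is exactly your $\|f\|_{L^1}+\|g\|_{L^1}$. So your computation is the sound version of this route. For non-negative $f,g$, which is the situation wherever the estimate is eventually used, it gives \eqref{eq:FC_al}. Your caveat that $\|K\|_\gamma<\infty$ needs $\gamma$ not to exceed the regularity of $K$ applies equally to the paper's proof, which uses the lemma with $\gamma=\beta$.

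What you have not proved is the lemma as stated, for signed $f,g$. Your factorization cannot be pushed further, because $\|H\|_\gamma$ depends on the increments of $K*f$ and $K*g$ separately, and the product rule multiplies it by $\|K*(f-g)\|_\infty$. For instance, for $g=-f$ one has $\|f+g\|_{L^1}=0$, yet $H=h(K*f)$ with $h(s)=\frac12\int_{-1}^1F'(ts)\,dt$, which is in general non-constant. The statement is nevertheless true; the missing idea is a symmetric splitting. Put $m=\frac12K*(f+g)$ and $r=\frac12K*(f-g)$, so that $F(K*f)-F(K*g)=F(m+r)-F(m-r)$, and write its increment between $y$ and $x$ as
\[
\int_{m(y)}^{m(x)}\big[F'(u+r(x))-F'(u-r(x))\big]\,du+\int_{r(y)}^{r(x)}\big[F'(m(y)+t)+F'(m(y)-t)\big]\,dt .
\]
The first integrand is bounded by $2L_{F'}\|r\|_\infty$ and the second by $2\|F'\|_\infty$. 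Now use the following bounds:
\begin{itemize}
\item $\|r\|_\infty\le\frac12\|f-g\|_\infty$;
\item $|m(x)-m(y)|\le\frac12\|K\|_\gamma\|f+g\|_{L^1}|x-y|^\gamma$;
\item $|r(x)-r(y)|\le\frac12\|f-g\|_\gamma|x-y|^\gamma$;
\item $\|F(K*f)-F(K*g)\|_\infty\le\|F'\|_\infty\|f-g\|_\infty$.
\end{itemize}
Together they give
\[
\|F(K*f)-F(K*g)\|_{\shc^\gamma}\le\Big(\|F'\|_\infty+\tfrac12L_{F'}\|K\|_\gamma\|f+g\|_{L^1}\Big)\|f-g\|_{\shc^\gamma},
\]
with no sign condition. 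The $r$-derivative of $F(m+r)-F(m-r)$ is bounded by $2\|F'\|_\infty$ uniformly in $r$, so the increment of $r$ is never multiplied by $\|r\|_\infty$. Only the increment of $m$ picks up that factor, and it is controlled by $\|f+g\|_{L^1}$.
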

\begin{proof}
In order  to estimate  $\|F(K*f)-F(K*g)\|_{\shc^\gamma}$,
we first observe that
	\begin{equation} \label{eq:I2infty}
		\|F(K*f)-F(K*g)\|_{\infty} \le \Vert F' \Vert_\infty \Vert K*f-K*g\Vert_\infty \le \Vert F' \Vert_\infty \Vert f - g\Vert_\infty, 
	\end{equation}
	so that it remains to estimate the $\Vert \cdot \Vert_\gamma$-seminorm, in \eqref{eq:holdnorm}. We proceed by estimating its numerator.
	\begin{align*}
		&|F(K*f(x))-F(K*f(y))-F(K*g(x))+F(K*g(y))|\\
		\leq &\bigg\vert\int_0^1 F'(aK*f(x)+(1-a)K*g(x))da (K*(f-g)(x))
		-\int_0^1 F'(aK*f(y)+(1-a)K*g(y))da (K*(f-g)(y))\bigg\vert\\
		\leq& \|F'\|_{\infty}\vert K*(f-g)(x)-K*(f-g)(y)\vert+\frac12\|F''\|_{\infty}\vert K*(f-g)(y)\vert \vert (K*f(x)-K*f(y) + K*g(x)-K*g(y) )\vert \\
		\leq& \|F'\|_{\infty}\|K*(f-g)\|_\gamma |x-y|^\gamma+\frac12\|F''\|_{\infty}\| K*(f-g)\|_{\infty}  \|K \ast (f+g)\|_{\gamma} |x-y|^\gamma \\
		\leq& \|F'\|_{\infty}\|K\|_{L^1}  \|f-g\|_{\gamma}|x-y|^\gamma+\frac12\|F''\|_{\infty}\| K\|_{L^1}\|f-g\|_{\infty}  \|K \|_{\gamma} \|f+g\|_{L^1}  |x-y|^\gamma\\
		\leq &\big(\|F'\|_{\infty}+\frac12\|F''\|_{\infty}\|K\|_{\gamma}\|f+g\|_{L^1}\big)\|f-g\|_{\shc^\gamma}\vert x-y\vert^\gamma\\
		\leq &\left(\|F'\|_{\infty}+\frac12\|F''\|_{\infty}\|K\|_{\gamma}\right)(1+\|f+g\|_{L^1})\|f-g\|_{\shc^\gamma}\vert x-y\vert^\gamma.
	\end{align*}
	Thus, dividing by $|x-y|^\gamma$ and taking the supremum over $x,y\in\R^d$, and combining the above calculations with
	\eqref{eq:I2infty}, it holds that
	\begin{align*}
		\notag  \|F(K*f)-F(K*g)\|_{\shc^\gamma}\leq &\|F'\|_{\infty}\|f-g\|_{\infty} +\left(\|F'\|_{\infty}+\frac12\|F''\|_{\infty}\|K\|_{\gamma}\right)(1+\|f+g\|_{L^1})\|f-g\|_{\shc^\gamma}\\
		\notag \leq & \left(\|F'\|_{\infty}+\left(\|F'\|_{\infty}+\frac12\|F''\|_{\infty}\|K\|_{\gamma})(1+\|f+g\|_{L^1}\right)\right)\|f-g\|_{\shc^\gamma}\\
		\leq & \left(2 \|F'\|_{\infty}+\frac12\|F''\|_{\infty}\|K\|_{\gamma}\right)(1+\|f+g\|_{L^1})\|f-g\|_{\shc^\gamma}.
	\end{align*}
	This concludes the proof.
\end{proof}	
From now on we  denote with $\phi$ the operator defined on $\shc^\gamma$ by 
\begin{equation}\label{def:phi}
	(\phi f)(x):=f(x)F((K*f)(x)), \ x \in \R^d.
\end{equation}

\begin{proposition}\label{pr:C_al}
  Let Assumptions \ref{ass:K} and \ref{ass:F} hold. Let $\gamma\in(0,1)$. Then $ \phi$ maps $\shc^{\gamma}\cap L^1$ into itself.
  Moreover there exists a positive constant $C_{F,K}$,
  such that,
 for all $f,g\in \shc^{\gamma}\cap L^1$
   \begin{align}\label{eq:preplin}
    \|\phi f\|_{\shc^{\gamma}}\leq C_{F,K}\|f\|_{\shc^{\gamma}}\left(1+\|f\|_{L^1}\right),
   \end{align}
\begin{equation}\label{eq:lingrow}
  \|\phi f\|_{L^1}\leq C_{F,K}\Vert f \Vert_{L^1},
 \end{equation}
  \begin{equation}\label{eq:lipgrow}
      \|\phi f-\phi g\|_{\shc^{\gamma}}\leq C_{F,K}(1+\|f\|_{\shc^{\gamma}}+\|g\|_{\shc^{\gamma}})(1+\|f+g\|_{L^1})\|f-g\|_{\shc^{\gamma}}.
    \end{equation}
  \end{proposition}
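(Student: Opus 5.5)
The plan is to take the three estimates separately. In each case I would use the pointwise product bound \eqref{eq:bahouri} (Corollary 2.86 of \cite{bahouri}) or H\"older's inequality, combined with Lemmas \ref{lm:prep0} and \ref{lm:prep2}. One caveat comes first: as worded, \eqref{eq:lingrow} cannot hold with a constant depending only on $F$ and $K$ (see the scaling example below). I will prove it with a constant that also depends on an a priori bound on $\|f\|_{L^1}$, and on the $L^1$ unit ball this gives a genuine $C_{F,K}$.

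\emph{Step 1: \eqref{eq:preplin}.} I apply Lemma \ref{lm:prep0} with $g=f$. This gives $\|\phi f\|_{\shc^\gamma}=\|fF(K*f)\|_{\shc^\gamma}\le \|f\|_{\shc^\gamma}C_{F,K}(1+\|f\|_{L^1})$, which is exactly \eqref{eq:preplin}. In particular $\phi f\in\shc^\gamma$.

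\emph{Step 2: \eqref{eq:lingrow} and $\phi f\in L^1$.} By H\"older's inequality, $\|\phi f\|_{L^1}\le \|F(K*f)\|_\infty\|f\|_{L^1}$. It therefore suffices to bound $\|F(K*f)\|_\infty$. Since $F'$ is bounded, $|F(r)|\le |F(0)|+\|F'\|_\infty|r|$. For the convolution, Young's inequality gives $\|K*f\|_\infty\le \|K\|_\infty\|f\|_{L^1}$, and $\|K\|_\infty<\infty$ because $K\in\shc^{\beta+}$ by Assumption \ref{ass:K}. Hence
\begin{equation*}
\|\phi f\|_{L^1}\le \big(|F(0)|+\|F'\|_\infty\|K\|_{\infty}\|f\|_{L^1}\big)\|f\|_{L^1}.
\end{equation*}
This shows that $\phi$ maps $\shc^\gamma\cap L^1$ into itself.

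This is the main obstacle. The bracket is bounded by a constant $C_{F,K}$ only when $\|f\|_{L^1}$ is bounded. For $\|f\|_{L^1}\le 1$ it gives \eqref{eq:lingrow} with $C_{F,K}=|F(0)|+\|F'\|_\infty\|K\|_\infty$. This is the regime in which the proposition is used later, since the fixed point is set in $S_{L^1}$. Without such a bound, a linear estimate with a constant depending only on $F$ and $K$ fails. Take $F(r)=r$ and $f=\lambda h$ with $h\ge0$: then $\phi f=\lambda^2 h\,(K*h)$, which grows quadratically in $\lambda$. The alternative route $\|K*f\|_\infty\le\|f\|_\infty\le\|f\|_{\shc^\gamma}$ gives a bound linear in $\|f\|_{L^1}$, but with constant $C_{F,K}(1+\|f\|_{\shc^\gamma})$. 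So in the written proof I would state \eqref{eq:lingrow} on the $L^1$ unit ball, or equivalently with the constant allowed to depend on an upper bound for $\|f\|_{L^1}$.

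\emph{Step 3: \eqref{eq:lipgrow}.} I split the difference as
\begin{equation*}
\phi f-\phi g=(f-g)F(K*f)+g\big(F(K*f)-F(K*g)\big).
\end{equation*}
For the first term, \eqref{eq:bahouri} together with \eqref{eq:Fl_al} and \eqref{eq:Kstarv} gives the bound $cC_F(1+\|f\|_{\shc^\gamma})\|f-g\|_{\shc^\gamma}$. For the second term, \eqref{eq:bahouri} gives $c\|g\|_{\shc^\gamma}\|F(K*f)-F(K*g)\|_{\shc^\gamma}$, and Lemma \ref{lm:prep2} bounds this by $c\|g\|_{\shc^\gamma}C_{F,K}(1+\|f+g\|_{L^1})\|f-g\|_{\shc^\gamma}$. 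Both contributions are dominated by $C_{F,K}(1+\|f\|_{\shc^\gamma}+\|g\|_{\shc^\gamma})(1+\|f+g\|_{L^1})\|f-g\|_{\shc^\gamma}$, which is \eqref{eq:lipgrow}. This step is routine.
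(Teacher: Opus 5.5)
Your proof is correct and follows the paper's route. \eqref{eq:preplin} is the case $g=f$ of Lemma \ref{lm:prep0}, and for \eqref{eq:lipgrow} the paper uses the same splitting together with Lemma \ref{lm:prep2} and the bound on $\|F(K*f)\|_{\shc^\gamma}$; the only difference is that it expands the $\shc^\gamma$-norm by hand into the sup-part $I_1$ and the seminorm-part $I_2$, where you invoke the algebra bound \eqref{eq:bahouri}.

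Your caveat on \eqref{eq:lingrow} is also right. In \eqref{eq:L1normphi} the paper's last inequality bounds $\|K*f\|_\infty$ by $\|K\|_\infty$ instead of $\|K\|_\infty\|f\|_{L^1}$. The stated linear bound therefore holds only when $\|f\|_{L^1}\le 1$, which is the $S_{L^1}$ regime the paper works in, and your example $F(r)=r$, $f=\lambda h$ shows that it fails in general.
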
 
  \begin{proof}
    We notice that the inequality \eqref{eq:preplin} is a special case of bound \eqref{eq:fg-L1} of Lemma \ref{lm:prep0} setting $g = f$. \eqref{eq:lingrow} comes  from the direct calculation
  \begin{equation}\label{eq:L1normphi}
    \|\phi f\|_{L^1}= \int_{\R^d} |f(x)F(K*f(x))|dx\leq \|F(K*f)\|_{\infty} \int_{\R^d} |f(x)|dx\leq (\vert F(0)\vert +\|F'\|_{\infty}\|K\|_{\infty})
      \Vert f \Vert_{L^1}.
  \end{equation}
  Finally we show \eqref{eq:lipgrow}. Expanding the $\shc^\gamma$-norm, according to
\eqref{eq:holdnorm},
  we get
      \begin{align*}
           \|\phi f-\phi g\|_{\shc^{\gamma}}&=\|\phi f-\phi g\|_{\infty}+\|\phi f-\phi g\|_{\gamma}\\
           &=\sup_{x\in\R^d}|\phi f(x)-\phi g(x)|+ \sup_{\underset{x\not=y}{x,y\in\R^d}}\frac{\vert\phi f(x)-\phi f(y)-\phi g(x)+\phi g(y)\vert}{|x-y|^{\gamma}} \\
           &=: \sup_{x\in\R^d}I_1(x)+ \sup_{\underset{x\not=y}{x,y\in\R^d}}\frac{I_2(x,y)}{|x-y|^{\gamma}}.
      \end{align*}
      We estimate the two summands separately. For $x \in \R^d$ we have
      \begin{align} \label{eq:I1}
      \notag  I_1(x)=&|f(x) F(K*f(x))-g(x) F(K*g(x))|\\ 
 \notag                =&|(f(x)-g(x))F(K*f(x))+g(x) (F(K*f(x))-F(K*g(x)))|\\
       \notag           \leq & \|f-g\|_{\infty} \|F(K*f)\|_{\infty}+\|g\|_{\infty} \Vert F' \Vert_\infty |K*f(x)-K*g(x)|\\
             \notag    \leq & \|f-g\|_{\infty} \left(\vert F(0) \vert+ \|F'\|_{\infty}\|K*f\|_{\infty}\right)+\|g\|_{\infty}\|F'\|_{\infty}|K*(f-g)(x)|\\
            \notag    \leq & \|f-g\|_{\infty} \left(\vert F(0) \vert+ \|F'\|_{\infty}\|K\|_{L^1}\|f\|_{\infty}\right)+\|g\|_{\infty}\|F'\|_{\infty}\|K\|_{L^1}\|f-g\|_{\infty}\\
      \notag     = & \left(\vert F(0) \vert+ \|F'\|_{\infty}(\|f\|_{\infty}+\|g\|_{\infty})\right)\|f-g\|_{\infty}\\
            \leq & C_F\left(1+ \|f\|_{\infty}+\|g\|_{\infty}\right)\|f-g\|_{\infty},
        \end{align}
        where for the last equality we used that $\|K\|_{L^1}=1$. For what concerns the second summand, for $x, y \in \R^d$, we have
        \begin{align}\label{eq:I2}
            \notag I_2(x,y)=&|f(x)F(K*f(x))-f(y)F(K*f(y))-g(x)F(K*g(x))+g(y)F(K*g(y))|\\
           \notag =&\big|\big(f(x)-g(x)-f(y)+g(y)\big)F(K*f(x))+(f(y)-g(y))\big(F(K*f(x))-F(K*f(y))\big)+\\ 
          \notag  &+\big(g(x)-g(y)\big)\big(F(K*f(y))-F(K*g(y)\big)\\
            \notag&+g(x)\big(F(K*f(x))-F(K*g(x))-F(K*f(y))+F(K*g(y))\big)\big|\\
            \notag\leq & \|f-g\|_{\gamma}|x-y|^{\gamma}\|F(K*f)\|_{\infty}+\|f-g\|_{\infty}\|F'\|_{\infty}\|K*f\|_{\gamma}|x-y|^{\gamma}+ \\
           \notag &+\|g\|_{\gamma}|x-y |^{\gamma}\|F(K*f)-F(K*g)\|_{\infty}+\|g\|_{\infty}\|F(K*f)-F(K*g)\|_{\gamma}|x-y |^{\gamma}\\
          \notag  \leq & \|f-g\|_{\gamma}|x-y|^{\gamma}  \|F(K*f)\|_{\shc^\gamma}+\|f-g\|_{\infty}\|F'\|_{\infty}
                         \|K*f\|_{\gamma}|x-y|^{\gamma}+ \\
           	\notag &+\|g\|_{\gamma}|x-y |^{\gamma}\|F(K*f)-F(K*g)\|_{\infty}+\|g\|_{\infty}\|F(K*f)-F(K*g)\|_{\gamma}|x-y |^{\gamma}\\
             \leq &C_F(1+ \|f\|_{\shc^\gamma})\|f-g\|_{\shc^\gamma}|x-y|^{\gamma}+\|g\|_{\shc^\gamma}\|F(K*f)-F(K*g)\|_{\shc^\gamma}|x-y |^{\gamma},
  \end{align}
  having used  \eqref{eq:fg-alpha} from  Lemma \ref{lm:prep0}, with $g =1$,
  in the last inequality,  together with \eqref{eq:Kstarv} for bounding $\|K*f\|_{\gamma}$. Using now \eqref{eq:FC_al} from Lemma \ref{lm:prep2} we have an upper bound for $\|F(K*f)-F(K*g)\|_{\shc^\gamma}$, so 
   dividing $I_2(x,y) $ by $|x-y|^{\gamma}$ in \eqref{eq:I2},
    where we have inserted the estimate
    \eqref{eq:FC_al}, one gets
     \begin{align} \label{eq:I2bis}
       \notag  \frac{I_2(x,y)}{|x-y|^{\gamma}}\leq &C_F(1+\|f\|_{\shc^\gamma})\|f-g\|_{\shc^\gamma}+ \|g\|_{\shc^\gamma} C_{F,K}(1+\|f+g\|_{L^1})\|f-g\|_{\shc^\gamma}\\
       \notag   \leq &\left(C_F(1+\|f\|_{\shc^\gamma}) + \|g\|_{\shc^\gamma}C_{F,K}(1+\|f+g\|_{L^1}) \right)\|f-g\|_{\shc^\gamma}\\
       \leq&C_{F,K}(1+\|f\|_{\shc^\gamma}+\|g\|_{\shc^\gamma})(1+\|f+g\|_{L^1})\|f-g\|_{\shc^\gamma}.
     \end{align}
     Summing up  \eqref{eq:I1} and \eqref{eq:I2bis},  we finally obtain \eqref{eq:lipgrow}.
 \end{proof}

 \section{The  singular non-linear
  and linearised Fokker-Planck PDEs}\label{sc:FP}

In this section we will always suppose  the validity of
Assumptions \ref{ass:beta} 
and \ref{ass:F} and  $K\in\shp(\R^d)$ to be a Borel
probability measure.
Let also $ v_0 \in \shc^{\be+}$.
We first give the definition of solution to the non-local singular non-linear Fokker Planck PDE 
 \begin{equation}\label{eq:FPlim}
    \left\{
     \begin{array}{l}
     \partial_t v=\frac12\Delta v-\text{div}(vF(K*v)b)\\
       v(0)=v_0
     \end{array}
      \right.
    \end{equation}
and to its associated linearised 
version. Then we state a fundamental a priori property that any solution of \eqref{eq:FPlim} satisfies.
    
\subsection{Definitions}

Before introducing the notion of weak and mild solutions to \eqref{eq:FPlim}
we state a preliminary lemma.

\begin{lemma}\label{lm:WPdiv_arg}
  Let $v\in C_T\shc^{\be}$ and let Assumptions \ref{ass:beta}
  and \ref{ass:F} hold. Then, the product $\phi(v)b$ is a well-defined element of $C_T\shc^{(-\be)+}$, where we recall that
  $\phi$ was defined in \eqref{def:phi}.
\end{lemma}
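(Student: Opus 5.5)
First I show that $\phi(v)=vF(K*v)\in C_T\shc^{\be}$. Then I use the pointwise product estimate \eqref{eq:bonyt} to multiply by $b$.

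\emph{Step 1: $\phi(v)\in C_T\shc^\be$.} Since $0<\be<\frac12$, we have $\be\in(0,1)$. Here $K$ is only assumed to be a probability measure, so Lemma \ref{lm:prep0} (which needs $L^1$ bounds) is not available; I rely instead on Corollary \ref{lm:BNcont}. With $\gamma=\be$ and $f=g=v\in C_T\shc^\be$, it gives $vF(K*v)\in C_T\shc^\be$ and
\[
\|\phi(v)\|_{C_T\shc^\be}\le C_F\|v\|_{C_T\shc^\be}\bigl(1+\|v\|_{C_T\shc^\be}\bigr).
\]
The continuity in time is part of the statement of that corollary. It is ultimately obtained from Corollary \ref{cr:prep}, where $F(K*v)\in C_T\shc^\be$, together with the algebra property and the bilinear estimate \eqref{eq:bahouri} applied to the splitting
\[
v(t)F(K*v(t))-v(s)F(K*v(s))=(v(t)-v(s))F(K*v(t))+v(s)\bigl(F(K*v(t))-F(K*v(s))\bigr).
\]

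\emph{Step 2: multiply by $b$.} By Assumption \ref{ass:beta} and the characterisation $C_T\shc^{(-\be)+}=\bigcup_{\al>-\be}C_T\shc^{\al}$ (Lemma B.2 of \cite{issoglio_russoMK}), each component $b_i$ lies in $C_T\shc^{-\al}$ for some $\al<\be$. Without loss of generality $\al>0$, since otherwise we may lower the regularity by embedding. Then $\be-\al>0$, so \eqref{eq:bonyt} applies with $\g=\be$. It gives $\phi(v)b_i\in C_T\shc^{-\al}\subseteq C_T\shc^{(-\be)+}$ for each $i$, with
\[
\|\phi(v)b_i\|_{C_T\shc^{-\al}}\le c\,\|\phi(v)\|_{C_T\shc^\be}\,\|b_i\|_{C_T\shc^{-\al}}.
\]
Collecting the components concludes the proof.

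\emph{Main obstacle.} The only delicate point is choosing $\al$ with $0<\al<\be$, so that the Bony-type condition $\g-\al>0$ holds. This is exactly why the inductive space $\shc^{(-\be)+}$ is used for $b$, rather than $\shc^{-\be}$, against $v\in C_T\shc^\be$. Beyond that choice, time continuity is inherited directly from \eqref{eq:bonyt} and Step 1.
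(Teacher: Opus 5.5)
Your proposal is correct and follows the paper's proof: Corollary~\ref{lm:BNcont} with $f=g=v$ gives $\phi(v)\in C_T\shc^{\be}$, and then \eqref{eq:bonyt}, with some $\al<\be$ such that $b\in C_T\shc^{-\al}$, yields $\phi(v)b\in C_T\shc^{-\al}\subseteq C_T\shc^{(-\be)+}$. Taking $\al>0$ (so that \eqref{eq:bony} applies as stated) and treating the components of $b$ separately is extra care the paper leaves implicit, and it does not change the argument.
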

\begin{proof}

  Since $v \in C_T \shc^\beta$, 
  by Corollary \ref{lm:BNcont} with $g=f=v$, we have $t \mapsto \phi(v(t)) \in C_T \shc^\beta$.

Let $\alpha < \beta$ such that $b \in C_T\shc^{-\alpha}$
 Since $\beta-\alpha >0$, by
\eqref{eq:bonyt} $t \mapsto \phi(v(t))b(t) \in  C_T \shc^{-\alpha}$.
\end{proof}

\begin{definition}\label{def:WsolFP}
 We say that $v\in C_T\shc^{\beta} $ is a weak solution to \eqref{eq:FPlim}
  if for all $t \in [0,T]$ and  $\varphi\in\shs(\R^d)$ 
\begin{equation}\label{eq:WsolFP}
  \langle v(t), \varphi\rangle-\langle v_0,\varphi \rangle=\int_0^t \frac12 \langle v(s) , \Delta \varphi \rangle ds + \int_0^t  \langle  (v(s)F(K*v(s))) \cdot b(s) ,\nabla \varphi \rangle ds, 
\end{equation}
where the dual pairing is the one of $\shs,\shs'$ and $\cdot$ is the pointwise
product. Lemma
\ref{lm:WPdiv_arg} shows
that the right-hand side of \eqref{eq:WsolFP} is well-defined.
\end{definition}
\begin{definition}\label{def:MsolFP}
We say that $v\in C_T\shc^{\beta} $ is a   mild solution to \eqref{eq:FPlim} if  for all $t\in[0,T]$
\begin{equation}\label{eq:MsolFP}
  v(t)= P_{t}v_0-\int_0^t P_{t-s}\left({\rm div}
    \left((v(s)F(K*v(s)))\cdot b(s)\right)\right)ds.
\end{equation}
\end{definition}
\begin{remark}\label{rmk:lmIssoglio}
  \begin{enumerate}
   \item Since $v_0 \in C^\nu$ for some $\nu > \beta$, 
    by Schauder estimates Lemma \ref{lm:schauder}
    we have that $t \mapsto P_tv_0 \in C_T\shc^{\beta+}$, see \eqref{eq:schauder2}.
  \item
By Lemma \ref{lm:issoglio}, we remark that the integral term of \eqref{eq:MsolFP} belongs to $ C_T\shc^{(1 - \beta)-}$, i.e. for every $\alpha < 1- \beta$,
the same integral belongs to $ C_T\shc^{\alpha}$.
\item The right-hand side of \eqref{eq:MsolFP} belongs to 
  $ C_T\shc^{\beta+}$ and in particular $ C_T\shc^{\beta}$.
\item If $v_0 \in \shc^{(1 - \beta)-}$, then, a posteriori,
  the mild solution belongs to $ C_T\shc^{(1 - \beta)-}$.
\end{enumerate}
\end{remark}

\begin{lemma}\label{lm:W-M}
  Weak and mild solutions for \eqref{eq:FPlim} are equivalent.
  \end{lemma}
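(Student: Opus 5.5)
The natural route is to freeze the non-linearity and reduce to a linear statement. Given $v\in C_T\shc^{\be}$, set $h(s):=\phi(v(s))b(s)$. By Lemma \ref{lm:WPdiv_arg}, $h\in C_T\shc^{-\al}$ for some $\al<\be$. Then $v$ is a weak solution of \eqref{eq:FPlim} exactly when it is a weak solution of the linear inhomogeneous equation $\partial_t v=\frac12\Delta v-{\rm div}\, h$, $v(0)=v_0$. It is a mild solution exactly when $v(t)=P_tv_0-\int_0^tP_{t-s}{\rm div}\,h(s)\,ds$. So it suffices to prove the equivalence for this linear equation with a fixed forcing ${\rm div}\,h\in C_T\shc^{-\al-1}$. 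This is the standard argument, as in \cite{issoglio_russoMK}.

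\textbf{Mild $\Rightarrow$ weak.} Fix $\varphi\in\shs$ and pair the mild formula with $\varphi$. Using the duality $\langle P_t\Psi,\varphi\rangle=\langle\Psi,P_t\varphi\rangle$ from Definition \ref{def:heatkern}, we get
\[
\langle v(t),\varphi\rangle=\langle v_0,P_t\varphi\rangle+\int_0^t\langle h(s),\nabla P_{t-s}\varphi\rangle ds .
\]
The sign comes from integration by parts, $-\langle{\rm div}\,h,\psi\rangle=\langle h,\nabla\psi\rangle$. Now apply $P_t\varphi-\varphi=\int_0^t\frac12 P_r\Delta\varphi\,dr$ to both terms and use Fubini in $(s,r)$. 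The remainders then reassemble into $\frac12\int_0^t\langle v(r),\Delta\varphi\rangle dr$, by the mild formula at time $r$, while the boundary term gives $\int_0^t\langle h(s),\nabla\varphi\rangle ds$. This yields \eqref{eq:WsolFP}.

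A cleaner alternative for this direction is to mollify: replace $h$ by $h_n=P_{1/n}h$. For smooth forcing the mild solution is classical, hence weak. One then passes to the limit using \eqref{eq:schauder2} and the continuity of $h\mapsto\int_0^\cdot P_{\cdot-s}{\rm div}\,h(s)ds$ from $C_T\shc^{-\al-\varepsilon}$ into $C_T\shc^{\be}$ given by Lemma \ref{lm:issoglio}. I would use this mollified version to avoid justifying Fubini against distributions.

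\textbf{Weak $\Rightarrow$ mild.} Here I would use test functions depending on time. First extend \eqref{eq:WsolFP} to $\varphi\in C^{1}([0,t];\shs)$, giving
\[
\langle v(t),\varphi(t)\rangle-\langle v_0,\varphi(0)\rangle=\int_0^t\langle v(s),\partial_s\varphi(s)+\tfrac12\Delta\varphi(s)\rangle ds+\int_0^t\langle h(s),\nabla\varphi(s)\rangle ds .
\]
This follows by a Riemann-sum discretisation in time, using the continuity of $s\mapsto v(s)$ in $\shc^\be$ and of $h$ in $\shc^{-\al}$ to control the error. Then choose $\varphi(s)=P_{t-s}\psi$ with $\psi\in\shs$. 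The bracket $\partial_s\varphi+\frac12\Delta\varphi$ vanishes, so
\[
\langle v(t),\psi\rangle=\langle P_tv_0,\psi\rangle+\int_0^t\langle h(s),\nabla P_{t-s}\psi\rangle ds ,
\]
which is the mild formula tested against $\psi$. Since $\psi$ is arbitrary and both sides are tempered distributions, the identity \eqref{eq:MsolFP} follows.

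\textbf{Main obstacle.} The delicate step is the time-dependent test-function extension, or equivalently the Fubini/duality interchange near $s=t$. There the quantity $\nabla P_{t-s}\psi$ is harmless for Schwartz $\psi$, but the pairings with distributions must be justified. I would handle this by noting that $P_{t-s}\psi$ stays in $\shs$ uniformly, with all Schwartz seminorms bounded for $s\in[0,t]$, and that $\langle h(s),\cdot\rangle$ is continuous on $\shs$ with bounds uniform in $s$ because $h\in C_T\shc^{-\al}$. Dominated convergence then justifies the limits.
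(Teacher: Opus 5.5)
Your proposal is correct, but it treats the linear problem differently from the paper. Both arguments start the same way, by freezing the non-linearity into a fixed forcing term; the paper writes it as $h_v=-{\rm div}(\phi(v)b)\in C_T\shc^{(-\be-1)+}$.

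From there the paper does no computation of its own. Mild $\Rightarrow$ weak is read off from Lemma 2.1 of \cite{issoglio_russoMK}. For weak $\Rightarrow$ mild, the paper builds the mild candidate $u$ from $v_0$ and $h_v$ and observes that $u$ and $v$ solve the same linear equation in the distributional sense. It then concludes $u=v$ from the uniqueness part of that lemma, applied to $u-v$, which solves the homogeneous heat equation with zero initial datum.

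You instead verify both directions directly by duality. For mild $\Rightarrow$ weak you use Fubini together with $P_t\varphi-\varphi=\frac12\int_0^tP_r\Delta\varphi\,dr$, or alternatively mollification plus the continuity estimate behind Lemma \ref{lm:issoglio}. For weak $\Rightarrow$ mild you test with the backward heat flow $\varphi(s)=P_{t-s}\psi$. This yields the mild formula without invoking any uniqueness theorem, so in effect you reprove the part of the cited lemma the paper relies on.

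The paper's route is shorter but leans on the external result. Yours is self-contained, at the cost of justifying the extension of \eqref{eq:WsolFP} to test functions in $C^1([0,t];\shs)$. That step needs two facts, and both hold, so your argument goes through:
\begin{itemize}
\item $\tau\mapsto P_\tau\psi$ is $C^1$ (indeed smooth) from $[0,t]$ into $\shs$, including at $\tau=0$. This is a stronger requirement than the uniform boundedness of seminorms you mention.
\item The bounded families $\{v(s)\}$ and $\{h(s)\}$ are equicontinuous on $\shs$.
\end{itemize}
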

 \begin{proof} 
   For $v \in C_T \shc^{\beta}$ we define $h_v$
   by $h_v(s) = -{\rm div}(\phi(v(s))b(s))$, which belongs to
   $C_T \shc^{(-\beta - 1)+}$
   taking into account Lemma \ref{lm:WPdiv_arg} and  Lemma \ref{lm:bern}.

   \textit{(Mild  $\implies$ Weak).} If $v$ is a mild solution, we apply 
   Lemma 2.1 of \cite{issoglio_russoMK} with $g = g_v$.

   \textit{(Weak $\implies$ mild).} For the converse, let $v$ be a weak solution, and define $u$ such that $u(t)= P_tv_0-\int_0^t P_{t-s}h_v(s)ds, t \in[0,T]$. Again by  Lemma 2.1 of  \cite{issoglio_russoMK} with $g = h_v$, $u$ is a solution of the (linear) PDE
             \begin{equation} \label{eq:linPDE}
            \partial_t u-\frac12\Delta u+ h_v=0,\quad u(0)=v_0,
          \end{equation}
          in the sense of distributions. By definition $v$ is also a weak solution of \eqref{eq:linPDE}. Now  the difference function $w:=u-v$ is a solution of \eqref{eq:linPDE} with $h_v = 0$ and $w(0) = 0$. The fact that $w \equiv 0$ is a consequence of the uniqueness statement in Lemma 2.1 of \cite{issoglio_russoMK}. 
 \end{proof}

\subsection{Relation with the linearised singular Fokker-Planck PDE}\label{ssc:linearFPRel}
 
 If in \eqref{eq:FPlim} we set $F \equiv 1$ and $b=g\in C_T\shc^{(-\be)+}$, we end up with the linear Fokker-Planck PDE
 \begin{equation}\label{eq:FPlin}
 \left\{
 \begin{array}{l}
   \partial_t v=\frac12\Delta v-{\rm div}(v(s)g(s))\\
    v(0)=v_0.
    \end{array}
    \right.
 \end{equation}
We point out that in this case  equivalence of weak and mild solution still holds. 
We state now a significant property fulfilled by
 any solution of \eqref{eq:FPlim}.
By assumption there is $0 < \alpha < \beta$ such that $b \in C_T\shc^{-\alpha}$.
 \begin{notation} \label{not:gw}
 For $w \in C_T\shc^{\beta}$ we define
 \begin{equation} \label{eq:gw}
  g_w(t):=F(K*w(t))b(t), \   t \in [0,T].
\end{equation}
By Corollary \ref{lm:BNcont} with $g=1$  and $f=w$, we have
$F(K*w) \in C_T\shc^\beta $.
Consequently
\eqref{eq:bonyt} implies that  $ g_w \in C_T\shc^{-\alpha}$ so that
 in particular it belongs to  $C_T\shc^{(-\beta)+}$.
  \end{notation}

  \begin{theorem}
    \label{thm:lin-lim}
   Let $v \in C_T \shc^\beta$ be a
   solution of \eqref{eq:FPlim}.
   \begin{enumerate}
   \item Then  $v$
   is also a solution of \eqref{eq:FPlin} with
   $g = g_v$.
 \item Suppose moreover that $v_0 \in \shc^{\beta+}$ is a non-negative function such that $ \|v_0\|_{L^1} =1$.
   Then $v$ is non-negative and  conserves the unit mass at any time,
   that is
   $$ v(t) \ge 0,\  \int_{\R^d} v(t,x) dx = 1, \ \forall t \in [0,T].$$
   \end{enumerate}
 \end{theorem}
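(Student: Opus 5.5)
Item 1 follows directly from the definitions. I would use the associativity property of Remark \ref{rmk:associativity}. With $\Gamma = b(s)$, $l_1 = F(K*v(s))$ and $l_2 = v(s)$, this gives
\[
\big(v(s)F(K*v(s))\big)\cdot b(s) = v(s)\cdot\big(F(K*v(s))\, b(s)\big) = v(s)\, g_v(s).
\]
This is allowed because $v(s), F(K*v(s)) \in \shc^\beta$ (Corollary \ref{lm:BNcont}) and $b(s)\in\shc^{-\alpha}$ with $\alpha<\beta$. By Notation \ref{not:gw}, $g_v\in C_T\shc^{-\alpha}\subset C_T\shc^{(-\beta)+}$. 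Substituting this identity into the weak formulation \eqref{eq:WsolFP} gives exactly the weak formulation of \eqref{eq:FPlin} with $g=g_v$. By the equivalence of weak and mild solutions (Lemma \ref{lm:W-M} and its linear analogue), the same holds for mild solutions.

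For item 2, item 1 lets me forget the non-linearity. $v$ is the solution of the \emph{linear} singular Fokker--Planck PDE \eqref{eq:FPlin} with the fixed drift $g:=g_v\in C_T\shc^{(-\beta)+}$ and initial condition $v_0\ge 0$, $\|v_0\|_{L^1}=1$. It therefore suffices to prove positivity and mass conservation for the linear equation. I would do this by regularization. Let $g_n := p_{1/n}*g$ as in Lemma \ref{lm:Xncont}, and let $v^n$ be the solution of \eqref{eq:FPlin} with drift $g_n$ and the same $v_0$. Since $g_n$ is bounded, continuous in time and smooth with bounded derivatives in space, $v^n(t)$ is the marginal law density of the classical SDE $dX^n_t = g_n(t,X^n_t)dt + dW_t$ with $X^n_0\sim v_0\,dx$. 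Hence $v^n(t)\ge 0$ and $\int v^n(t,x)\,dx = 1$ for all $t$. This identification uses uniqueness for the linear PDE with smooth drift, which is the special case of Lemma 2.1 of \cite{issoglio_russoMK} or of the linear well-posedness theory.

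Next I would pass to the limit. The continuity of the solution map $g\mapsto v$ of the linear singular PDE from $C_T\shc^{(-\beta)+}$ to $C_T\shc^{\beta}$ (Theorem \ref{thm:exunlinFPCont}, together with uniqueness from Theorem \ref{thm:exunlinFP}), combined with Lemma \ref{lm:Xncont}(iii), gives $v^n\to v$ in $C_T\shc^\beta$, hence uniformly. Nonnegativity passes to the limit pointwise, and Fatou as in Lemma \ref{lm:B1close} gives $\|v(t)\|_{L^1}\le 1$.

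The main obstacle is the reverse inequality $\int v(t)\,dx\ge 1$, since uniform convergence does not prevent mass escaping to infinity. I would prove tightness of the laws $v^n(t)\,dx$, uniformly in $n$ and $t$. The first approach I would try is to use the Zvonkin-type representation from \cite{issoglio_russoMK}, $X^n_t = X_0 + W_t + (\text{bounded correction from } u_n(t,X^n_t))$, where $u_n$ solves a Kolmogorov PDE and is bounded uniformly in $n$. This gives $\sup_{n,t}\P(|X^n_t|>R)\to 0$ as $R\to\infty$. Then for any $\varepsilon$ I can choose $R$ with $\int_{|x|\le R}v^n(t)\,dx\ge 1-\varepsilon$ for all $n$, and uniform convergence on the compact ball $\{|x|\le R\}$ yields $\int v(t)\,dx\ge 1-\varepsilon$. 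Alternatively, one can test \eqref{eq:FPlin} directly against cutoff functions $\varphi_R(x)=\varphi(x/R)$ and show that the drift term vanishes as $R\to\infty$ using the $L^1$ bound. This is delicate because the pairing with the distribution $g$ is involved, which is why I would prefer the probabilistic tightness route.
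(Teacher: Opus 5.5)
Your argument is correct in substance, and item 1 is exactly the paper's argument (associativity, Remark \ref{rmk:associativity}). For item 2 your skeleton is the same as the paper's Proposition \ref{prop:SDEtoPDE}: regularise $g_v$ by $p_{1/n}*g_v$, get $v^n\to v$ in $C_T\shc^\beta$ from Theorem \ref{thm:exunlinFPCont}, and use that $v^n(t)$ is a law density.

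The difference is how mass is prevented from escaping to infinity in the limit. The paper quotes the stability theorem for the rough martingale problem (Theorem 4.3 of \cite{issoglio_russoMPb}). The laws $\P^n$ converge weakly on path space to the solution $\P$ of rMP$(g_v,v_0\,dx)$, so $v^n(t)\,dx$ converges weakly to the probability measure $\P\circ Z_t^{-1}$, which must equal $v(t)\,dx$ (Corollary \ref{cor:mass-con}). You instead prove uniform tightness of the one-dimensional marginals by hand, via a Zvonkin decomposition, and combine it with local uniform convergence. Both rest on essentially the same input, namely uniform-in-$n$ control of the Kolmogorov PDE with drift $g_n$. Your route avoids the rMP existence, uniqueness and stability machinery and makes the non-escape of mass explicit. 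The paper's route also identifies $v(t)$ as the marginal density of the rMP solution, which it reuses for Theorem \ref{thm:WPMKSDE}.

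Two steps need tightening. First, the Zvonkin identity is not ``$X_0+W_t+$ bounded correction''. If $u_n$ solves $\partial_t u_n+\frac12\Delta u_n+\nabla u_n\,g_n=\lambda u_n-g_n$ with $u_n(T)=0$, It\^o's formula gives
\[
X^n_t=X_0+W_t+u_n(0,X^n_0)-u_n(t,X^n_t)+\lambda\int_0^t u_n(s,X^n_s)\,ds+\int_0^t\nabla u_n(s,X^n_s)\,dW_s .
\]
The stochastic integral is not bounded. It is, however, bounded in $L^2$ uniformly in $n$ and $t$, because $\sup_n\|\nabla u_n\|_\infty<\infty$, so Chebyshev's inequality still gives your tightness.

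Second, identifying $v^n(t)$ with the law of $X^n_t$ cannot rest on Theorem \ref{thm:exunlinFP} alone. Its uniqueness class is $C_T\shc^\beta$, while the law of $X^n_t$ is a priori only a probability measure. Run the identification as the paper does. Proposition \ref{lin_sing_PDE} gives $v^n\ge 0$ and $v^n\in B_TL^1$. Then superposition (Theorem 2.6 of \cite{figalli}), together with uniqueness of the Stroock--Varadhan problem for the smooth drift $g_n$, shows that $v^n(t)\,dx$ is the law of $X^n_t$ and hence has unit mass. Uniqueness of finite-measure solutions for bounded Lipschitz drifts would serve equally well in place of the Stroock--Varadhan uniqueness. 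With these two fixes your argument is complete.
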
 
 \begin{proof}
\begin{enumerate}
\item This item follows by the associativity property stated in Remark \ref{rmk:associativity}.
  \item It is a direct consequence of previous item and Corollary \ref{cor:mass-con} with $B=g_v$.
\end{enumerate}
   \end{proof}

\section{A class of singular linear Fokker-Planck PDE}\label{ssc:linearFP}

This section is devoted to the study of well-posedness and related properties of the PDE
\eqref{eq:FPlin} with $g\in C_T\shc^{(-\be)+},  v_0 \in \shc^{\be+}$.

\begin{theorem}\label{thm:exunlinFP}
  Let $g\in C_T\shc^{(-\be)+}$ and  $ v_0 \in \shc^{\be+}$.
  \begin{itemize}
  \item[(a)]

     There exists a unique
    weak solution $v \in C_T \shc^{\beta}$ of \eqref{eq:FPlin}.
  \item[(b)] The unique solution $v$ in point (a) 
    belongs to $C_T \shc^{\beta+}$. Moreover if  $v_0 \in \shc^{(1 - \beta)-}$, then
 $v$ belongs to $ C_T\shc^{(1 - \beta)-}$.
   \end{itemize}
\end{theorem}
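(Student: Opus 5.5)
By the linear analogue of Lemma \ref{lm:W-M}, weak and mild solutions of \eqref{eq:FPlin} coincide. I will therefore prove existence and uniqueness of a mild solution, that is, a fixed point $v\in C_T\shc^\beta$ of
\[
\Gamma(v)(t):=P_tv_0-\int_0^t P_{t-s}\big({\rm div}(v(s)g(s))\big)ds .
\]
I fix $0<\alpha<\beta$ such that $g\in C_T\shc^{-\alpha}$; this is possible by the characterisation $C_T\shc^{(-\be)+}=\bigcup_{\alpha<\be}C_T\shc^{-\alpha}$ recalled after Definition \ref{def:besov}. I also fix $\nu>\beta$ with $v_0\in\shc^\nu$.

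\textbf{Step 1: $\Gamma$ maps $C_T\shc^\beta$ into itself.} Since $\beta-\alpha>0$, the bound \eqref{eq:bonyt} gives $vg\in C_T\shc^{-\alpha}$ with $\|vg\|_{C_T\shc^{-\alpha}}\le c\|v\|_{C_T\shc^\beta}\|g\|_{C_T\shc^{-\alpha}}$. By Lemma \ref{lm:bern}, ${\rm div}(vg)\in C_T\shc^{-\alpha-1}\subset C_T\shc^{(-\beta-1)+}$. Lemma \ref{lm:issoglio} then places the integral term in $C_T\shc^{\gamma}$ for every $\gamma<1-\beta$. Because $\beta<\frac12$, we have $\beta<1-\beta$, so we may pick $\gamma\in(\beta,1-\beta)$. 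The free term $P_\cdot v_0$ lies in $C_T\shc^{\nu}$ by Remark \ref{rmk:lmIssoglio}.

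\textbf{Step 2: contraction in the $\rho$-norm.} Linearity of $\Gamma$ in $v$ gives
\[
\|\Gamma(v)(t)-\Gamma(w)(t)\|_{\shc^\beta}\le \int_0^t \|P_{t-s}{\rm div}((v-w)(s)g(s))\|_{\shc^\beta}ds .
\]
Corollary \ref{cor:bernschau} applies with regularity index $-\alpha$ and $2\theta=\beta+\alpha+1$, so that $\theta=\frac{1+\alpha+\beta}{2}<1$ because $\alpha<\beta<\frac12$. Combined with \eqref{eq:bony}, it bounds the integrand by $c(t-s)^{-\theta}\|g\|_{C_T\shc^{-\alpha}}\|(v-w)(s)\|_{\shc^\beta}$. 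Multiplying by $e^{-\rho t}$ yields
\[
d_{\rho,\beta}(\Gamma v,\Gamma w)\le c\|g\|_{C_T\shc^{-\alpha}}\sup_t\int_0^t e^{-\rho(t-s)}(t-s)^{-\theta}ds\; d_{\rho,\beta}(v,w)\le c\|g\|\,\Gamma(1-\theta)\rho^{\theta-1}d_{\rho,\beta}(v,w),
\]
where $\Gamma(1-\theta)$ is the Euler Gamma function. Choosing $\rho$ large makes $\Gamma$ a contraction on the Banach space $(C_T\shc^\beta,d_{\rho,\beta})$, and this norm is equivalent to the usual one. Banach's fixed point theorem gives existence and uniqueness of a mild solution, hence, by the equivalence above, of a weak solution. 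This proves (a).

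\textbf{Step 3: regularity, part (b).} The solution satisfies $v=\Gamma(v)$. By Step 1, the integral term lies in $C_T\shc^{\gamma}$ for some $\gamma\in(\beta,1-\beta)$, and $P_\cdot v_0\in C_T\shc^\nu$. Hence $v\in C_T\shc^{\min(\nu,\gamma)}\subset C_T\shc^{\beta+}$. If moreover $v_0\in\shc^{(1-\beta)-}$, then for every $\gamma<1-\beta$ both $P_\cdot v_0$ (by \eqref{eq:schauder2}) and the integral term lie in $C_T\shc^\gamma$. Therefore $v\in C_T\shc^{(1-\beta)-}$, in agreement with Remark \ref{rmk:lmIssoglio}.

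\textbf{Main obstacle.} The delicate point is the exponent bookkeeping in Step 2. Estimating in $\shc^\beta$, where the product $vg$ is defined only because $\beta>\alpha$, costs a time singularity $(t-s)^{-(1+\alpha+\beta)/2}$. This singularity is integrable only because $\alpha<\beta<\frac12$. A second point is the continuity of the integral in time, which here I take from Lemma \ref{lm:issoglio} rather than reproving it.
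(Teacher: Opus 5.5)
Your proposal is correct and follows the paper's proof essentially step by step: you pass to the mild formulation via the weak--mild equivalence, get the self-mapping from the product estimate and Lemma~\ref{lm:issoglio}, prove a contraction in the $\rho$-norm with $\theta=\frac{1+\alpha+\beta}{2}$, and read off part (b) from the regularity of the integral term; the only cosmetic difference is that you invoke Corollary~\ref{cor:bernschau} directly rather than Schauder and Bernstein separately. One small inaccuracy is that $t\mapsto P_tv_0$ need not be continuous into $\shc^{\nu}$ at $t=0$, since the heat semigroup is not strongly continuous on H\"older--Zygmund spaces; you should instead state $P_\cdot v_0\in C_T\shc^{\nu'}$ for any $\nu'\in(\beta,\nu)$, which follows from \eqref{eq:schauder2} and is all that Steps 1 and 3 require.
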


\begin{proof}

Taking into account Lemma \ref{lm:W-M}
we can instead consider (mild) solutions of the form
  \begin{equation}\label{eq:mildlinFP}
  v(t)=P_tv_0-\int_0^t P_{t-s}{\rm div}(v(s)g(s))ds.
\end{equation}

We now prove item (a). 
\begin{description}
\item{(i)}
 Let us denote $\shi$ the map
  defined by
\begin{equation}\label{eq:I}
\shi(v)(\cdot):= P_\cdot v_0-\int_0^\cdot P_{\cdot-s}{\rm div}(v(s)g(s))ds.
\end{equation}
By Lemma \ref{lm:WPdiv_arg} and Remark \ref{rmk:lmIssoglio} 3., with $F = 1, g = b$,
$\shi$ is 
is well-defined and
maps $C_T\shc^\beta$ to itself. 
\item{(ii)}
  Now we prove the contraction property for  $v\mapsto \shi(v)$
  under the equivalent $\rho$-norm (see Definition \ref{def:rhonorm}), for
a suitable large enough $\rho$,
which will be defined later.
More particularly we will show the existence of $\rho > 0$ depending on
$\alpha, \beta, \Vert g \Vert_{C_T \shc^{-\alpha}}$ 
  such that,
for any  $v_1,v_2\in C_T\shc^{\beta}$,
 \begin{equation} \label{eq:contraction}
    \|\shi(v_1)-\shi(v_2)\|^{(\rho)}_{C_T\shc^{\beta}} \le
    \frac{1}{2}  \|v_1- v_2\|^{(\rho)}_{C_T\shc^{\beta}}.
  \end{equation}
  Let $\alpha < \beta$ such that $g \in C_T \shc^{-\alpha}$ and 
  let us set  $\theta:= \frac{\beta + \alpha + 1}{2}.$
We obtain the following chain of inequalities, where at the fourth line we have
used \eqref{eq:schauder1} with $\gamma = -\alpha - 1$ so that $\beta = 2 \theta + \gamma$
and at the fifth line \eqref{eq:bonyt} and Lemma \ref{lm:bern}:
  \begin{align*}
    \|\shi(v_1)-\shi(v_2)\|^{(\rho)}_{C_T\shc^{\beta}}=&\left\|\int_0^\cdot
    P_{\cdot-s}{\rm div}(g(s)(v_1(s)-v_2(s)))ds\right\|^{(\rho)}_{C_T\shc^{\beta}}\\
   = & \sup_{t\in[0,T]} \left\{e^{-\rho t} \left\|\int_0^tP_{t-s}{\rm div}(g(s)(v_1(s)-v_2(s)))ds\right\|_{\shc^{\beta}}\right\}\\
     \leq&   \sup_{t\in[0,T]} \left\{e^{-\rho t} \int_0^t\left\|P_{t-s}{\rm div}(g(s)(v_1(s)-v_2(s)))\right\|_{\shc^{\beta}}ds\right\}\\
    \leq&   \sup_{t\in[0,T]} \left\{ce^{-\rho t}\int_0^t(t-s)^{-\theta}
     \|{\rm div}(g(s)(v_1(s)-v_2(s)))\|_{\shc^{-(\alpha+1)}}ds\right\}\\
    \leq&   \sup_{t\in[0,T]} \left\{c\int_0^t(t-s)^{-\theta}e^{-\rho (t-s)}\left\|g\|_{C_T\shc^{-\alpha}}
          e^{-\rho s}\|v_1(s)-v_2(s)\right\|_{\shc^{\beta}}ds\right\}\\
  \leq&   \|g\|_{C_T\shc^{-\alpha}}\|v_1-v_2\|^{(\rho)}_{C_T\shc^{\beta}}c \sup_{t\in[0,T]} \left\{\int_0^t(t-s)^{-\theta}e^{-\rho (t-s)}ds\right\}\\
  \leq&   \|g\|_{C_T\shc^{-\alpha}}\|v_1-v_2\|^{(\rho)}_{C_T\shc^{\beta}}c \int_0^\infty s^{-\theta}e^{-\rho s}ds\\
 =& \|g\|_{C_T\shc^{-\alpha}}\|v_1-v_2\|^{(\rho)}_{C_T\shc^{\beta}}c\rho^{\theta-1}\Gamma(1-\theta).
 \end{align*}
 Since $\theta<1$ by the assumption made on $\al$ and $\be$, the map $\rho\mapsto \rho^{\theta-1}$
converges to zero when $\rho$ goes to $+\infty$.
Thus, choosing $\rho$ so that
$$ \|g\|_{C_T\shc^{-\alpha}} c\rho^{\theta-1}\Gamma(1-\theta) = \frac{1}{2},$$
we obtain  \eqref{eq:contraction}.
\item{(iii)} By Banach fixed point theorem the proof of item (a) is concluded.
  \end{description}
  Item (b) follows by Remark \ref{rmk:lmIssoglio} 4. with $F= 1, g = b$.
  \end{proof}

We continue discussing the dependence on the coefficients of the solution $v$ to \eqref{eq:FPlin}.
\begin{theorem}\label{thm:exunlinFPCont}
  Let $g$  (resp. $g_n, n \in \N$) be an element of   $C_T\shc^{(-\be)+}$   and let $v_0$ (resp. $v_0^n$) in $\shc^{\beta+}$.
  Let $v$ (resp. $v_n$)
  be the solution
to 
\eqref{eq:FPlin} (resp. with $g=g_n$ and $v_0 = v_0^n$), 
provided by Theorem \ref{thm:exunlinFP}.
If $(g_n)$ is a sequence converging to $g$ in $C_T\shc^{(-\be)+}$ and $(v_0^n)$
converges to $v_0$ in $\shc^{\beta+}$, then $v_n\to v$ in $C_T\shc^{\beta}$.
\end{theorem}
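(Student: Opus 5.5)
We work with the mild formulation \eqref{eq:mildlinFP} and the equivalent $\rho$-norms of Definition \ref{def:rhonorm}. By the characterisation of inductive spaces recalled after Definition \ref{def:besov}, there exist $\alpha<\beta$ and $\nu>\beta$ with $g_n\to g$ in $C_T\shc^{-\alpha}$ and $v_0^n\to v_0$ in $\shc^{\nu}$. Enlarging $\alpha$ if necessary (keeping $\alpha<\beta$), we may assume $g,g_n\in C_T\shc^{-\alpha}$ for all $n$. In particular $M:=\sup_n\|g_n\|_{C_T\shc^{-\alpha}}<\infty$.

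Set $\theta=\frac{\beta+\alpha+1}{2}<1$ as in the proof of Theorem \ref{thm:exunlinFP}. Subtracting the mild equations gives
\begin{equation*}
v_n(t)-v(t)=P_t(v_0^n-v_0)-\int_0^tP_{t-s}{\rm div}\big(g_n(s)(v_n(s)-v(s))\big)ds-\int_0^tP_{t-s}{\rm div}\big((g_n(s)-g(s))v(s)\big)ds.
\end{equation*}
We bound the three terms in $\|\cdot\|^{(\rho)}_{C_T\shc^\beta}$ as follows.
\begin{itemize}
\item The first term is at most $c\|v_0^n-v_0\|_{\shc^\nu}$, since $\|P_tf\|_{\shc^\beta}\le c\|f\|_{\shc^\beta}\le c\|f\|_{\shc^\nu}$ by \eqref{eq:schauder1} with $\theta=0$.
\item For the second term we repeat the chain of inequalities in step (ii) of the proof of Theorem \ref{thm:exunlinFP}, using \eqref{eq:schauder1}, Lemma \ref{lm:bern} and \eqref{eq:bonyt}. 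This bounds it by $cM\rho^{\theta-1}\Gamma(1-\theta)\|v_n-v\|^{(\rho)}_{C_T\shc^\beta}$.
\item The same estimates bound the third term by $c\,T^{1-\theta}(1-\theta)^{-1}\|g_n-g\|_{C_T\shc^{-\alpha}}\|v\|_{C_T\shc^\beta}$. Here $\|v\|_{C_T\shc^\beta}<\infty$ by Theorem \ref{thm:exunlinFP}.
\end{itemize}

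The key point is that $\rho$ can be chosen independently of $n$, because $M$ is a uniform bound. We choose $\rho$ so that $cM\rho^{\theta-1}\Gamma(1-\theta)\le\frac12$. Absorbing the second term into the left-hand side then yields
\begin{equation*}
\|v_n-v\|^{(\rho)}_{C_T\shc^\beta}\le 2c\|v_0^n-v_0\|_{\shc^\nu}+C_{T,\theta}\|g_n-g\|_{C_T\shc^{-\alpha}}\|v\|_{C_T\shc^\beta}\to0.
\end{equation*}
Since the $\rho$-norm is equivalent to the usual norm on $C_T\shc^\beta$ (with constant $e^{\rho T}$), it follows that $v_n\to v$ in $C_T\shc^\beta$.

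The main obstacle is only the uniformity step: one must extract a common index $\alpha<\beta$ from convergence in the inductive space $C_T\shc^{(-\beta)+}$ and ensure the contraction constant does not depend on $n$. Both are handled by Lemma B.2 of \cite{issoglio_russoMK}, which provides the common $\alpha$, together with the boundedness of convergent sequences, which provides the uniform bound $M$.
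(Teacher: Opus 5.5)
Your argument is correct and is essentially the paper's proof: the same splitting $v_ng_n-vg=g_n(v_n-v)+(g_n-g)v$, the same common indices $\alpha<\beta<\nu$, and the same Schauder--Bernstein--product estimates with $\theta=\frac{\alpha+\beta+1}{2}$. The only difference is the closing step: you absorb the $\|v_n-v\|$ term in the weighted norm $\|\cdot\|^{(\rho)}_{C_T\shc^\beta}$, with $\rho$ chosen uniformly in $n$ through $\sup_n\|g_n\|_{C_T\shc^{-\alpha}}$, whereas the paper applies the generalised Gronwall inequality to obtain the explicit Mittag-Leffler bound \eqref{eq:genGron}; either device works here.
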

\begin{proof}
We emphasize that
 the sequence $(v_n)$ belongs to $ C_T\shc^{\beta}$.
 By assumption, there is $\nu >\beta$ such that  $v^n_0$ converges to $v_0$ in $\shc^{\nu}$.
 Let $ \alpha < \beta$ such that $(g_n)$ converges to $g$ in $C_T\shc^{-\alpha}$.
  We want to show that  $v_n$ converges to $v$ in $C_T\shc^{\beta}$ as $n \rightarrow +\infty$.
We define  $\theta:=\frac{\al+\be+1}2$.  
  Using Schauder's and Bernstein inequalities Lemmata \ref{lm:schauder}-\ref{lm:bern}, for every $t \in [0,T]$, we have
\begin{align} \label{eq:sben}
  \|v(t)-v_n(t)\|_{\shc^\beta}\leq&\|P_t(v_0-v_0^n)\|_{\shc^\beta}+\left\|\int_0^tP_{t-s}{\rm div}(v(s)g(s)-v_n(s)g_n(s))ds\right\|_{\shc^\beta}
  \notag \\
\leq&c\|(v_0-v_0^n)\|_{\shc^\nu}+\int_0^t\left\|P_{t-s}{\rm div}(v(s)g(s)-v_n(s)g_n(s))\right\|_{\shc^\beta}ds \notag\\
\leq&c\|(v_0-v_0^n)\|_{\shc^\nu}+c\int_0^t(t-s)^{-\theta}\left\|{\rm div}(v(s)g(s)-v_n(s)g_n(s))\right\|_{\shc^{-\alpha-1}}ds \notag \\
  \leq&c\|(v_0-v_0^n)\|_{\shc^\nu}+c\int_0^t(t-s)^{-\theta}\left\|v(s)g(s)-v_n(s)g_n(s)\right\|_{\shc^{-\alpha}}ds  \\
   \leq&c\|(v_0-v_0^n)\|_{\shc^\nu}+c\int_0^t(t-s)^{-\theta}\left\|v(s)(g(s)-g_n(s))-(v_n(s)-v(s))g_n(s)\right\|_{\shc^{-\alpha}}ds \notag   \\
\leq&c\|(v_0-v_0^n)\|_{\shc^\nu}+c\int_0^t(t-s)^{-\theta}\left[\|v(s)(g(s)-g_n(s))\|_{\shc^{-\alpha}}+\|(v_n(s)-v(s))g_n(s)\|_{\shc^{-\alpha}}\right]ds \notag\\
  \leq&c\|(v_0-v_0^n)\|_{\shc^\nu}+c\int_0^t(t-s)^{-\theta}\left[\|v(s)\|_{\shc^{\beta}}\|g(s)-g_n(s)\|_{\shc^{-\alpha}}+\|v_n(s)-v(s)\|_{\shc^\beta}\|g_n(s)\|_{\shc^{-\alpha}}\right]ds,
        \notag
\end{align}
having used  \eqref{eq:bony} in the last bound. 
Since $g_n\to g$ in $C_T\shc^{-\alpha}$ it is true that $\sup_n\|g_n\|_{C_T\shc^{-\alpha}}<\infty$. Thus, \eqref{eq:sben} implies
\begin{align*}
  \|v(t)-v_n(t)\|_{\shc^\beta}\leq& c\|(v_0-v_0^n)\|_{\shc^\nu}+c \|v\|_{C_T\shc^\beta}\|g-g_n\|_{C_T\shc^{-\alpha}}\int_0^t(t-s)^{-\theta}ds\\
&+c\int_0^t(t-s)^{-\theta}\|v_n(s)-v(s)\|_{\shc^\beta}\sup_n\|g_n\|_{C_T\shc^{-\alpha}}ds\\
\leq&   c\|(v_0-v_0^n)\|_{\shc^\nu}+c\|v\|_{C_T\shc^\beta}\|g-g_n\|_{C_T\shc^{-\alpha}}\frac{T^{1-\theta}}{1-\theta}\\
&+c\int_0^t(t-s)^{-\theta}\|v_n(s)-v(s)\|_{\shc^\beta}\sup_n\|g_n\|_{C_T\shc^{-\alpha}}ds.
\end{align*}
We use the generalisation of Gronwall's Lemma  stated in \cite{ye} and taking the supremum for $t \in [0,T]$ in previous inequality, we get
\begin{equation} \label{eq:genGron} 
\|v-v_n\|_{C_T\shc^\beta}\leq c\left[\|v_0-v_0^n\|_{\shc^\nu}+\|v\|_{C_T\shc^\beta}\|g-g_n\|_{C_T\shc^{-\alpha}}\frac{T^{1-\theta}}{1-\theta}\right]E_{1-\theta}\left(c\sup_n\|g_n\|_{C_T\shc^{-\alpha}}T^{1-\theta}\Gamma(1-\theta)\right),
\end{equation}
where $\Gamma(x)$ is the Euler Gamma function and
\begin{equation} \label{eq:Mittag}
  E_{1-\theta}(x):=\sum_{k=0}^\infty\frac{x^k}{\Gamma((1-\theta) k+1)}
\end{equation}
  is the Mittag-Leffler function of parameter $1 - \theta$.
This concludes the proof.
\end{proof}

At this point we want to show that, whenever the initial condition
$\eta_0$ is non-negative with unitary mass (and smooth enough), the solution $\eta$ 
to the linear Fokker-Planck PDE \eqref{eq:FPlin},
provided
by Theorem \ref{thm:exunlinFP}, 
preserves previous properties at all times. 
This  relies on the following key lemma. 

\begin{lemma}\label{lm:vinL1}
  Let $h$ be a vector-valued function
  in $C_TC_b^{2}(\R^d)$, $\eta_0\in L^1\cap
C_b^3(\R^d)$ be a non-negative function. 
 Let $\eta\in C_T\shc^{\beta}$ be the unique solution in the sense of  Definition   \ref{def:WsolFP}, 
ensured by Theorem~\ref{thm:exunlinFP}, 
to 
\begin{equation}\label{eq:FPlinProb}
   \partial_t \eta =\frac12\Delta \eta -{\rm div}(\eta(s)h(s)),\ \eta(0)=\eta_0.
   \end{equation}
   Then 
    $\eta(t) \ge 0$
   and  $\|\eta(t)\|_{L^1}=
   \|\eta_0\|_{L^1}$,
   for all $t\in[0,T]$.
 \end{lemma}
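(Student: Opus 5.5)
Since $h\in C_TC_b^2$ is a genuine bounded Lipschitz function, I will identify $\eta$ with the law density of a classical SDE and read off positivity and mass conservation from that. Concretely, I consider $dX_t = h(t,X_t)dt + dW_t$ with $X_0\sim \eta_0(x)dx/\|\eta_0\|_{L^1}$; if $\eta_0\equiv 0$ there is nothing to prove, since $\eta\equiv 0$ is the unique solution. Strong existence and uniqueness hold by the Lipschitz property of $h$. Set $\mu_t := \|\eta_0\|_{L^1}\,\mathrm{Law}(X_t)$, which is a non-negative measure of mass $\|\eta_0\|_{L^1}$. By It\^o's formula applied to $\varphi\in\shs$ and taking expectations, $\mu$ solves the weak formulation \eqref{eq:WsolFP} with $F\equiv 1$ and $b=h$, at the level of measures. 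The lemma then reduces to showing that $\mu_t$ has a density which coincides with $\eta(t)$.

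To avoid a measure-valued uniqueness argument, I will work instead with the function-level representation. Since $h\in C^2_b$ and $\eta_0\in C^3_b$, classical parabolic theory (e.g.\ Friedman) gives a classical solution $\tilde\eta\in C^{1,2}_b$ of $\partial_t\tilde\eta=\frac12\Delta\tilde\eta-\mathrm{div}(\tilde\eta h)$. Expanding the divergence, this is $\partial_t\tilde\eta=\frac12\Delta\tilde\eta - h\cdot\nabla\tilde\eta - (\mathrm{div}\,h)\tilde\eta$, a linear equation with bounded $C^1$ coefficients and bounded potential $c:=-\mathrm{div}\,h$. Reversing time, the Feynman--Kac formula gives
\[
\tilde\eta(t,x)=\E\Big[\eta_0(Y^{t,x}_t)\exp\Big(\int_0^t c(t-r,Y^{t,x}_r)\,dr\Big)\Big],
\]
where $dY_r=-h(t-r,Y_r)dr+dW_r$ and $Y_0=x$. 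This representation is manifestly non-negative because $\eta_0\ge0$.

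Next I identify $\tilde\eta$ with $\eta$. Since $\tilde\eta\in C_T C^2_b\subset C_T\shc^\beta$, it is a mild, hence weak, solution by Lemma \ref{lm:W-M}, and the uniqueness part of Theorem \ref{thm:exunlinFP}(a) gives $\tilde\eta=\eta$. Thus $\eta\ge0$.

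For the mass, I first note $\eta(t)\in L^1$. The Feynman--Kac formula bounds $\eta(t,x)\le e^{t\|\mathrm{div}\, h\|_\infty}\,\E[\eta_0(Y^{t,x}_t)]$, and integrating in $x$ via the bounded-drift Girsanov/Gaussian comparison (or the duality $\int \E[\eta_0(Y^{t,x}_t)]dx=\E$-type flow argument with the Jacobian of a $C^2$ flow) gives $\|\eta(t)\|_{L^1}<\infty$. Then I test \eqref{eq:WsolFP} with $\varphi_R(x)=\varphi(x/R)$, where $\varphi\in C_c^\infty$ and $\varphi=1$ near $0$. The right-hand side is bounded by $\int_0^t\int (R^{-2}\|\Delta\varphi\|_\infty+R^{-1}\|h\|_\infty\|\nabla\varphi\|_\infty)|\eta(s)|\,dx\,ds$, which tends to $0$ as $R\to\infty$ because $\sup_s\|\eta(s)\|_{L^1}<\infty$. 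Monotone convergence then yields $\int\eta(t)=\int\eta_0$, and since $\eta\ge0$ this is the claimed norm identity.

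The main obstacle is this uniform $L^1$ bound, which is what makes the cutoff argument legitimate. I would obtain it most cleanly by duality: for $0\le\psi\in C_c^\infty$, solve the backward Kolmogorov equation $\partial_s u+\frac12\Delta u+h\cdot\nabla u=0$ with $u(t)=\psi$, so that $0\le u\le\|\psi\|_\infty$. Pairing it with $\eta$ gives $\int\eta(t)\psi=\int\eta_0u(0)\le\|\psi\|_\infty\|\eta_0\|_{L^1}$. Letting $\psi\uparrow1$ gives $\|\eta(t)\|_{L^1}\le\|\eta_0\|_{L^1}$, and also $\int\eta(t)\psi = \E[\|\eta_0\|_{L^1}\psi(X_t)]$, which identifies $\eta(t)$ with the density of $\mu_t$ directly.
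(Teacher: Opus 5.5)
Your positivity argument is the paper's: a classical solution from parabolic Schauder theory, identification with $\eta$ through the uniqueness in Theorem \ref{thm:exunlinFP}(a), then time reversal and Feynman--Kac. One small point: a classical solution is directly a weak solution in the sense of Definition \ref{def:WsolFP}, so the detour through Lemma \ref{lm:W-M} is not needed.

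The mass part is where you genuinely differ from the paper.

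\emph{The paper's route.} It gets $\sup_t\|\eta(t)\|_{L^1}<\infty$ purely analytically. It shows that the mild map $J$ is a contraction on $B_TL^1$ for a weighted norm, using $|\nabla p_t|\le Ct^{-1/2}p_{2t}$. It then identifies the fixed point with $\eta$, because the Picard iterates of $J$ and $\shi$ started from $P_\cdot\eta_0$ coincide. Finally it obtains $\|\eta(t)\|_{L^1}=\|\eta_0\|_{L^1}$ from Figalli's superposition theorem.

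\emph{Your route.} You obtain the bound by duality with the backward Kolmogorov equation and then close with an elementary cutoff in \eqref{eq:WsolFP}. This shows that the superposition principle is unnecessary once the $L^1$ bound is known. Your duality identity also yields $\eta(t)\,dx=\|\eta_0\|_{L^1}\,\mathrm{Law}(X_t)$, a smooth-drift version of Proposition \ref{prop:SDEtoPDE}. The price is that you pair $u(s)$ with an $\eta(s)$ that is only known to be bounded. You therefore have to supply Gaussian decay of $u(s)$ and $\nabla u(s)$, uniformly in $s$, to kill the boundary terms. The paper's $L^1$ bound, by contrast, needs nothing beyond the heat kernel.

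\emph{Your alternative routes.} A plain Girsanov/Cauchy--Schwarz estimate does not obviously give integrability in $x$. The flow route, however, is stronger than you use it. For additive noise, Liouville's formula gives $\det\nabla_x Y^{t,x}_t=\exp\big(-\int_0^t\mathrm{div}\,h(t-r,Y^{t,x}_r)\,dr\big)$, which is exactly your Feynman--Kac weight. For each $\omega$ the map $x\mapsto Y^{t,x}_t$ is a diffeomorphism of $\R^d$, so Tonelli and the change of variables $y=Y^{t,x}_t$ give $\int\eta(t,x)\,dx=\|\eta_0\|_{L^1}$ directly.
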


\begin{proof}
 The linear Fokker Planck PDE  \eqref{eq:FPlinProb} can be rewritten as
    \begin{equation}\label{eq:FPlunardi}
    \partial_t \eta=\frac12\Delta \eta - \nabla \eta h- \eta{\rm div} h ,\ \eta(0)=\eta_0,
  \end{equation}
  which admits a classical solution in $ C_b^{1,2+\epsilon}([0,T]\times \R^d)$, for some $\epsilon  > 0$, by
  Theorem 5.1.9 of \cite{lunardi95}. This is therefore also a solution in the sense of distributions. Now, following the reasoning in Remark 4.12 in \cite{issoglio_russoPDEa}, we know that $C_b^{1,2+\epsilon}([0,T]\times \R^d)
  \subseteq C_T\shc^{(1+\be)+}(\R^d) \subseteq C_T\shc^{\be}(\R^d)   $. Thus, previous classical solution
   is a weak solution in the sense of Definition \ref{def:WsolFP}
   with $F \equiv 1$, $b = h$, $v_0 = \eta_0$. By item (a)  of Theorem \ref{thm:exunlinFP},
   which states in particular uniqueness for weak solutions of \eqref{eq:FPlunardi},
 we can conclude that the unique weak solution $\eta$ to \eqref{eq:FPlinProb}  belongs to $C_b^{1,2+\epsilon}([0,T]\times\R^d)$.
    
 Now we denote by $u$ the time reversal on the interval $[0,T]$  of $\eta$ defined by
 \begin{equation}\label{eq:FPrever}
 u(t,x):=\eta(T-t,x),\ \forall(t,x)\in[0,T]\times\R^d,
 \end{equation}
 which is the classical solution of the Kolmogorov PDE 
 \begin{equation}\label{eq:Kolm}
 \partial_t u +\frac12 \Delta u  - \nabla u \hat h - u{\rm div} {\hat h} =0,\ u(T)=\eta_0,
\end{equation}
where ${\hat h}(t) = h(T-t), t \in [0,T].$
 Let $(X_t^{s,x})$ be the stochastic flow solution of 
 \begin{equation}\label{eq:stochflow}
 dY_t=dW_t-\hat h(t,Y_t)dt,\ Y_s=x,
 \end{equation}
 with $s\leq t$.
 Then by Feynman-Kac formula we obtain a representation of $u$ in terms of the stochastic flow $(X_t^{s,x})$, that is
 \begin{equation}\label{eq:FKun}
   u(s,x)=\E\left[\eta_0(X_T^{s,x})\exp{\int_s^T (-{\rm div}{\hat h})(r,X_r^{s,x})}dr\right],
 \end{equation}
which implies that  $\eta$ has representation
 \begin{equation}\label{eq:FKvn}
 \eta(s,x)=\E\left[\eta_0(X_T^{T-s,x})\exp{\int_{T-s}^T (-{\rm div} {\hat h})(r,X_r^{T-s,x})}dr\right].
 \end{equation}
This implies that $\eta(t,x)\geq 0$ for all $(t,x)\in[0,T]\times \R^d$. Now we consider the mild formulation
 for \eqref{eq:FPlinProb}.
 We define the mapping $J$ on  $B_TL^1$  by
\begin{equation}\label{def:tau}
  \zeta  \mapsto  J_\cdot(\zeta):=P_\cdot\eta_0-\displaystyle\int_0^\cdot P_{\cdot-s}{\rm div}(\zeta(s)h(s))ds,
  \end{equation}
 where we recall that $B_T L^1$ is  endowed with the norm $\Vert \eta \Vert_{B_T L^1} : = \sup_{t \in [0,T]} \Vert \eta(t) \Vert_{L^1}$.
  We want to show that $J$ takes values in  $B_TL^1$
  and it is a contraction with respect to an equivalent norm.
   Concerning the stability of $J$,
for $ \zeta \in  B_TL^1 $, $ t \in [0,T]$ we have
\begin{equation}\label{eq:Jt}
\| J_t(\zeta)\|_{L^1}\leq\|P_t\eta_0\|_{L^1}+\left\|\int_0^tP_{t-s}{\rm div}(\zeta(s)h(s))ds\right\|_{L^1}= \|\eta_0\| _{L^1}+\left\|\int_0^tP_{t-s}{\rm div}(\zeta(s)h(s))ds\right\|_{L^1},
\end{equation}
since the heat operator $P_t$ conserves the $L^1$-norm. Thus we are left to bound the second summand, which gives
\begin{align}\label{eq:stab}
\notag
  \left\|\int_0^tP_{t-s}{\rm div}(\zeta(s)h(s))ds\right\|_{L^1}&=\left\|\int_0^t{\rm div} (P_{t-s}(\zeta(s)h(s)))ds\right\|_{L^1}
  \\ \notag
&=\int_{\R^d}\left|\int_0^t\int_{\R^d} \nabla_x p_{t-s}(x-y)\zeta(s,y)h(s,y)dyds\right|dx\\
&\leq   \int_{\R^d}\int_0^t\int_{\R^d}\left| \nabla_x p_{t-s}(x-y)\zeta(s,y)h(s,y)\right|dydsdx.
\end{align}
Since there exists  $\tilde C>0$ such that
$ ae^{-a^2}\leq \tilde  C e^{\frac{-a^2}2},\ \forall a\in\R$, setting $a:=\frac{|x|}{\sqrt t}$ we have for all $t>0$
 \begin{equation*}
 |\nabla p_t(x)|=\frac{|x|}{t}p_t(x)=\frac{|x|}{t}e^{-\frac{|x|^2}{2t}}\frac1{(2\pi t)^{d/2}}\leq  \frac{\tilde C}{\sqrt t} e^{-\frac{|x|^2}{4t}}\frac{2^{d/2}}{(4\pi t)^{d/2}}= \frac{C}{\sqrt{t}}p_{2t}(x),
  \end{equation*}
  where $C:=\tilde C 2^{d/2}$. 
  So using this in \eqref{eq:stab} 
 we get 
  \begin{align}\label{eq:stab2}
  \notag
   \left\|\int_0^tP_{t-s}{\rm div}(\zeta(s)h(s))ds\right\|_{L^1}& \leq
  \int_{\R^d}\int_0^t\int_{\R^d}\frac{C}{\sqrt{t-s}}\left|p_{2(t-s)}(x-y)\zeta(s,y)h(s,y)\right|dydsdx\\
\notag
&\leq  \int_0^t\int_{\R^d}\int_{\R^d}\frac{C}{\sqrt{t-s}}\left|p_{2(t-s)}(x-y)\zeta(s,y)h(s,y)\right|dxdyds\\
&=   \int_0^t\int_{\R^d}\frac{C}{\sqrt{t-s}}\left|\zeta(s,y)h(s,y)\right|dyds,
\end{align}
since $\displaystyle{\int_{\R^d}p_{2(t-s)}(x-y)dx=1}$. 
Thus, using the fact that $h$ is bounded in \eqref{eq:stab2}, bound  \eqref{eq:Jt} gives
\begin{align*}
  \Vert J_t(\zeta)\Vert_{ L^1}    \le \Vert \eta_0\Vert_{L^1} +  \|h\|_{\infty} \int_0^t \frac{C}{\sqrt{t-s}}\|\zeta(s,\cdot)\|_{L^1}ds
     \le \Vert \eta_0\Vert_{L^1} +  \|h\|_{\infty}C\sqrt{T}\|\zeta\|_{B_T L^1}
\end{align*}
and  taking the supremum over $t\in[0,T]$ the stability is proved.

Now we focus on the contraction property of the operator $J$. Let us consider $\zeta_1,\zeta_2 \in B_TL^1$.
Coming back to \eqref{def:tau},
using \eqref{eq:stab2} with $\zeta = \zeta_1 - \zeta_2$ and boundedness of $h$
we easily get
\begin{equation} \label{eq:J12}
  \|J_t(\zeta_1)-J_t(\zeta_2)\|_{L^1}
   =
   \| \int_0^tP_{t-s}{\rm div}((\zeta_1(s)-\zeta_2(s))h(s))ds \|_{L^1}
   \leq
  \|h\|_{\infty} \int_0^t \frac{C}{\sqrt{t-s}} \left\| \zeta_1(s)-\zeta_2(s)\right\|_{L^1}ds.
\end{equation}
For $\rho>0$ let us define the norm $\|\cdot\|^{(\rho)}_{B_TL^1}:=\sup_{t\in[0,T]}e^{-\rho t}\|\cdot\|_{B_TL^1}$, which is equivalent to the $\|\cdot\|_{B_TL^1}$ norm. 
We multiply both sides of \eqref{eq:J12} by $e^{-\rho t}$ to get
\begin{align*}
  e^{-\rho t}\|J_t(\zeta_1)-J_t(\zeta_2)\|_{L^1}\leq&    \|h\|_{\infty} \int_0^t \frac{Ce^{-\rho(t-s)}}{\sqrt{t-s}} e^{-\rho s}\left\| \zeta_1(s)-\zeta_2(s)\right\|_{L^1}ds\\
                                                    &\leq C  \|h\|_{\infty}  \left\| \zeta_1-\zeta_2\right\|^{(\rho)}_{B_TL^1} \int_0^t \frac{e^{-\rho s}}{s^{\frac{1}{2}}} ds \\
  &\leq    C  \rho^{-\frac{1}{2}}  \|h\|_{\infty}  \left\| \zeta_1-\zeta_2\right\|^{(\rho)}_{B_TL^1}   \int_0^\infty \frac{e^{-s}}{s^{\frac{1}{2}}} ds  \\
  &\leq   C \|h\|_{\infty} \rho^{-\frac12}\sqrt{\pi}\left\| \zeta_1-\zeta_2\right\|^{(\rho)}_{B_TL^1},
\end{align*}
since $ \int_0^\infty \frac{e^{-s}}{s^{\frac{1}{2}}} ds = \sqrt{\pi}$.
Therefore, taking the supremum over $[0,T]$ leads us to the inequality
\begin{equation*}
\|J(\zeta_1)-J(\zeta_2)\|^{(\rho)}_{B_TL^1}\leq C \|h\|_{\infty}  \rho^{-\frac12}\sqrt{\pi}\left\| \zeta_1-\zeta_2\right\|^{(\rho)}_{B_TL^1}.
\end{equation*}
Since $\rho\mapsto \rho^{-\frac12}$ is a decreasing function we can choose $\rho$ such that $C \|h\|_{\infty} \rho^{-\frac12}\sqrt{\pi}=\frac12$, thus we get
\begin{equation}
\|J(\zeta_1)-J(\zeta_2)\|^{(\rho)}_{B_TL^1}\leq\frac12\left\| \zeta_1-\zeta_2\right\|^{(\rho)}_{B_TL^1},
\end{equation}
that is, $J$ is a contraction and admits a unique fixed point $\zeta \in B_TL^1$, by Banach fixed point theorem.
%

We now show that  $\eta$ coincides with this fixed point.
The solution $\eta$ belongs to $C_T \shc^\beta$ by Theorem \ref{thm:exunlinFP} and it
is the unique fixed point of the solution map $\shi:  C_T\shc^\beta \to C_T\shc^\beta $, where $\shi$  was defined in  \eqref{eq:I} in the proof of  Theorem \ref{thm:exunlinFP}.
We define $\zeta^0(t) = P_t \eta_0, t \in [0,T]$. It belongs to  $C_T \shc^\beta \cap B_T L^1$ by assumption on $\eta_0$, \eqref{eq:schauder2}, and the conservation of the
$L^1$-norm of the heat semigroup. Therefore, the Picard iterations of $J$ and $\shi$ coincide, that is
$\zeta^{n+1}=J(\zeta^n) = \shi(\zeta^n)$ for $n \ge 0$, thus they both belong to  $C_T \shc^\beta \cap B_T L^1$. By convergence of the Picard iterations in  $C_T \shc^\beta \cap B_T L^1$, we have that
$\eta = \zeta$ so that $t \mapsto \Vert \eta(t) \Vert_{L^1}$ is bounded on $[0,T]$.  
We can now apply Theorem 2.6 of \cite{figalli} to $\eta(t,x)dx$ since the latter is a positive finite measure, solution to \eqref{eq:FPlinProb},
and retrieve that
$\|\eta(t)\|_{L^1}=  \|\eta_0\|_{L^1}, $ for all $t\in[0,T]$.
\end{proof}

The next result is an application of Lemma \ref{lm:vinL1} to the
singular linear Fokker-Planck \eqref{eq:FPlin}.

\begin{proposition}\label{lin_sing_PDE}
  Let $g\in C_T\shc^{(-\be)+}$ and  $v $ be  a solution to \eqref{eq:FPlin} in $C_T\shc^{\beta}$.
  Let $v_0 := v(0)$ belong to
  $ L^1(\R^d)\cap \shc^{\be+}$
with $\|v_0\|_{L^1}\le 1$ and  non-negative. Then $v(t) \in L^1(\R^d)$, it is non-negative
and $\|v(t)\|_{L^1}\leq 1$, for all $t\in[0,T]$.
    \end{proposition}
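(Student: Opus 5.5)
The plan is an approximation argument: regularize the drift and the initial condition, apply Lemma \ref{lm:vinL1} to the smooth problems, and pass to the limit using the continuity result Theorem \ref{thm:exunlinFPCont} together with Fatou's lemma.

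\textbf{Regularized problems.} Let $g_n := p_{1/n}\ast g$ as in \eqref{eq:molbn}. By Lemma \ref{lm:Xncont}, each $g_n$ is bounded together with all its space derivatives, and $g_n\to g$ in $C_T\shc^{(-\be)+}$. To apply Lemma \ref{lm:vinL1} we need $g_n\in C_TC^2_b$. Time continuity in $C^2_b$ follows from item (ii) of Lemma \ref{lm:Xncont}, since $t\mapsto g_n(t)$ is continuous in $\shc^{\g}$ for every $\g>0$, in particular for some $\g>2$, and $\shc^{\g}\subseteq C^2_b$.

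For the initial data, set $v_0^n := P_{1/n} v_0$. Then:
\begin{itemize}
\item $v_0^n$ is smooth, bounded with bounded derivatives (by \eqref{eq:schauder1}), hence in $C^3_b$;
\item $v_0^n$ is non-negative, and $\|v_0^n\|_{L^1}=\|v_0\|_{L^1}\le 1$, since the heat kernel is a probability density;
\item if $v_0\in\shc^\nu$ with $\nu>\beta$, then \eqref{eq:schauder2} applied with a slightly smaller index $\nu'\in(\beta,\nu)$ gives $v_0^n\to v_0$ in $\shc^{\nu'}$, hence in $\shc^{\beta+}$.
\end{itemize}

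\textbf{Smooth solutions and passage to the limit.} Let $v_n$ be the solution of \eqref{eq:FPlin} with $g=g_n$ and initial condition $v_0^n$. Lemma \ref{lm:vinL1} gives $v_n(t)\ge0$ and $\|v_n(t)\|_{L^1}=\|v_0^n\|_{L^1}\le1$ for all $t$. By uniqueness in Theorem \ref{thm:exunlinFP}, the given $v$ is the solution with data $(g,v_0)$. Theorem \ref{thm:exunlinFPCont} then yields $v_n\to v$ in $C_T\shc^\beta$, and in particular uniformly on $[0,T]\times\R^d$.

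It follows that $v_n(t)\ge0$ passes to the pointwise limit, so $v(t)\ge0$. Fatou's lemma, exactly as in the proof of Lemma \ref{lm:B1close}, gives
\[
\int v(t,x)\,dx\le\liminf_n\int v_n(t,x)\,dx\le1 .
\]
Hence $v(t)\in L^1$, $v(t)\ge 0$ and $\|v(t)\|_{L^1}\le1$ for all $t\in[0,T]$.

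\textbf{Main obstacle.} The delicate step is checking that the regularized data meet the hypotheses of Lemma \ref{lm:vinL1}. The key point is that $g_n$ belongs to $C_TC^2_b$ with continuity in time, which is exactly what item (ii) of Lemma \ref{lm:Xncont} provides. One also needs $v_0^n\in C^3_b\cap L^1$ and the convergence $v_0^n\to v_0$ in the inductive topology of $\shc^{\beta+}$. The latter requires losing a little regularity in \eqref{eq:schauder2}, which is harmless because convergence in $\shc^{\beta+}$ only asks for convergence in some $\shc^{\nu'}$ with $\nu'>\beta$. Only the inequality $\|v(t)\|_{L^1}\le 1$, not equality, survives the limit, and that is all the statement requires.
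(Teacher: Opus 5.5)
Your proposal is correct and follows essentially the same route as the paper's proof: mollify the drift and the initial datum with $p_{1/n}$, apply Lemma \ref{lm:vinL1} to the regularized problems, and pass to the limit using Theorem \ref{thm:exunlinFPCont} together with the Fatou argument of Lemma \ref{lm:B1close}. You also spell out two points the paper leaves implicit, namely that $g_n\in C_TC^2_b$ follows from item (ii) of Lemma \ref{lm:Xncont}, and that $v_0^n\to v_0$ in $\shc^{\beta+}$ requires giving up a little regularity in \eqref{eq:schauder2}.
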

    \begin{proof}
By Lemma \ref{lm:Xncont} 
we can  consider a sequence $(g_n)\subseteq C_TC_b^{2}$, converging to $g$ in $C_T\shc^{(-\be)+}$ as $n \to \infty$.
We also consider the sequence
$(v^n_0)\subseteq C_b^{3}$ defined by $v^n_0:=p_{\frac1n}\ast v_0$.
Since $v_0 \in \shc^{\beta+}$, by Lemma \ref{lm:schauder} we get that $v_0^n \to v_0$ in $\shc^{\beta+}$.
By Theorem \ref{thm:exunlinFP}, for each $n$, there is
$v_n \in C_T \shc^{\beta}$,  solution to (\ref{eq:FPlin}) with $g = g_n$ and initial value $v_0 = v_0^n$.
We apply Lemma \ref{lm:vinL1} to $\eta = v_n$ where $\eta_0=v_0^n$ and $h=g_n$, to obtain that $\|v_n(t)\|_{L^1}\le 1$
and $v_n(t,x)\geq0$ for all  $(t,x)\in[0,T]\times \R^d$ and for all $n\in\N$. 
In particular the sequence $(v_n)$ lives in $C_T \shc^\beta \cap S_{L^1}$.
Since $g_n\to g$ in $C_T\shc^{(-\be)+}$ and $v_0^n\to v_0$ in $\shc^{\beta+}$, Theorem \ref{thm:exunlinFPCont},
 which states the continuity with respect to the coefficient and initial data,
 guarantees that $v_n\to v$ in $C_T\shc^{\beta}$.  By Lemma \ref{lm:B1close},  $v$ also  belongs to $ C_T\shc^{\beta} \cap S_{L^1}$
 and the conclusion follows.
\end{proof}
\begin{corollary}\label{cor:lin_sing_PDE}
  Let $g\in C_T\shc^{(-\be)+}$ and  $v $ be  a solution to \eqref{eq:FPlin} in $C_T\shc^{\beta}$
such that $v_0 := v(0)$ belongs to 
  $ L^1(\R^d)\cap \shc^{\be+}$
  and  non-negative. Then $v(t)$
 is non-negative
  for all $t\in[0,T]$ and $v \in B_TL^1$.
    
\end{corollary}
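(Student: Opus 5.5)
The plan is to reduce Corollary \ref{cor:lin_sing_PDE} to Proposition \ref{lin_sing_PDE} by linearity, rescaling the initial datum so that its $L^1$-norm is at most one. The key point is that \eqref{eq:FPlin} is linear in $v$ for fixed $g$. So if $v$ solves \eqref{eq:FPlin} with initial value $v_0$, then for every $\lambda>0$ the function $\lambda v$ is again in $C_T\shc^\beta$ and solves \eqref{eq:FPlin} with initial value $\lambda v_0$. This can be checked directly on the weak formulation \eqref{eq:WsolFP} with $F\equiv 1$, since the pointwise product satisfies $(\lambda v(s))\cdot g(s)=\lambda\,(v(s)\cdot g(s))$, a property inherited from \eqref{eq:bony} and the smooth approximation.

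First I treat the trivial case $\|v_0\|_{L^1}=0$. Then $v_0=0$ a.e., and since $v_0\in\shc^{\beta+}$ is continuous, $v_0\equiv 0$. By uniqueness in Theorem \ref{thm:exunlinFP}(a), $v\equiv 0$, which is non-negative and lies in $B_TL^1$.

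Otherwise I set $\lambda:=\|v_0\|_{L^1}^{-1}$ and $\tilde v:=\lambda v$. Then $\tilde v\in C_T\shc^\beta$ solves \eqref{eq:FPlin}, and its initial value $\tilde v(0)=\lambda v_0$ belongs to $L^1\cap\shc^{\beta+}$, is non-negative, and has unit $L^1$-norm. Proposition \ref{lin_sing_PDE} applies and gives $\tilde v(t)\ge 0$ and $\|\tilde v(t)\|_{L^1}\le 1$ for all $t\in[0,T]$. Scaling back yields $v(t)\ge 0$ and
\[
\sup_{t\in[0,T]}\|v(t)\|_{L^1}\le \|v_0\|_{L^1}<\infty,
\]
that is, $v\in B_TL^1$.

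There is no real obstacle. The only point needing care is that the scaled function is again a solution in the sense of Definition \ref{def:WsolFP}, and linearity of the pointwise product handles this.
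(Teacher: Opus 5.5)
Your proof is correct and is essentially the paper's argument: the paper also rescales, setting $\tilde v = v/M$ for any $M>\|v_0\|_{L^1}$, and then applies Proposition \ref{lin_sing_PDE}. Taking $M$ strictly larger than $\|v_0\|_{L^1}$, rather than equal to it, simply removes the need for your separate treatment of the case $\|v_0\|_{L^1}=0$.
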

\begin{proof}
  Let $M > \|v_0\|_{L^1}$. We apply Proposition \ref{lin_sing_PDE} with
$\tilde v = \frac{v}{M}$ and we conclude.
\end{proof}


\section{Well-posedness for
  the non-local singular non-linear Fokker-Planck PDE}
\label{sc:wpnonlinear}

In this section we study well-posedness of the non-local singular non-linear
Fokker-Planck PDE \eqref{eq:FPlim}, and we divide the study of uniqueness and
existence in two subsections.
Throughout the section we assume the validity of Assumption \ref{ass:beta},
\ref{ass:K}
and \ref{ass:F}. We still suppose that $v_0\in\shc^{\be+} \cap L^1$
is a non-negative function.
By Assumption  \ref{ass:beta},  $b \in C_T\shc^{-\alpha}$ for some $\alpha < \beta$,
fixed for the Sections  \ref{ssc:nonlinU}   and \ref{ssc:nonlinear}.

We recall that,
if $w \in C_T\shc^{\beta}$
we denote by $g_w$ the element $C_T\shc^{-\alpha}$ defined, in Notation \ref{not:gw},
by
$g_w(t)=F(K*w(t))b(t), \  t \in [0,T]$.

\subsection{Uniqueness for the non-linear Fokker-Planck PDE}
\label{ssc:nonlinU}

We state and prove a  uniqueness result which is based on an a priori 
 $L^1$-bound on a given
solution to \eqref{eq:FPlim}.

\begin{proposition}\label{pr:uniFPlim}
  Let Assumptions \ref{ass:beta}, \ref{ass:K} and \ref{ass:F} hold and let
  $v_0\in\shc^{\be+} \cap L^1(\R^d)$ be a non-negative function.
  Then there exists at most one solution 
 to the non-local singular non-linear Fokker-Planck PDE \eqref{eq:FPlim}.
\end{proposition}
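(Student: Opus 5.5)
Let $v_1,v_2\in C_T\shc^\beta$ be two solutions of \eqref{eq:FPlim} with the same initial datum $v_0$. The strategy is to get an a priori $L^1$ bound on each $v_i$ from the linearised problem, and then run a singular Gronwall argument in $C_T\shc^\beta$ using the local Lipschitz estimate \eqref{eq:lipgrow}.

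\textbf{Step 1 ($L^1$ bounds).} By item 1 of Theorem \ref{thm:lin-lim}, each $v_i$ solves the linear equation \eqref{eq:FPlin} with $g=g_{v_i}\in C_T\shc^{-\alpha}$ (Notation \ref{not:gw}). Corollary \ref{cor:lin_sing_PDE} then gives $v_i(t)\ge 0$ and $v_i\in B_TL^1$. So there is $M<\infty$ with
\[
\sup_t\|v_i(t)\|_{L^1}\le M,\qquad \|v_i\|_{C_T\shc^\beta}\le M,\qquad i=1,2.
\]

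\textbf{Step 2 (mild difference estimate).} By Lemma \ref{lm:W-M} both functions are mild solutions. Subtracting the two mild formulations, we apply Schauder \eqref{eq:schauder1} with $\theta=\frac{\alpha+\beta+1}{2}<1$, then Bernstein (Lemma \ref{lm:bern}), then \eqref{eq:bony} (legitimate since $\beta>\alpha$). This gives
\[
\|v_1(t)-v_2(t)\|_{\shc^\beta}\le c\int_0^t (t-s)^{-\theta}\,\|\phi(v_1(s))-\phi(v_2(s))\|_{\shc^\beta}\,\|b\|_{C_T\shc^{-\alpha}}\,ds .
\]

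\textbf{Step 3 (Lipschitz bound).} Proposition \ref{pr:C_al}, estimate \eqref{eq:lipgrow}, applies because $v_i(s)\in\shc^\beta\cap L^1$. Using Step 1, $\|v_1(s)+v_2(s)\|_{L^1}\le 2M$, so
\[
\|\phi(v_1(s))-\phi(v_2(s))\|_{\shc^\beta}\le C_{F,K}(1+2M)^2\|v_1(s)-v_2(s)\|_{\shc^\beta}.
\]
Hence $u(t):=\|v_1(t)-v_2(t)\|_{\shc^\beta}$ is continuous and satisfies
\[
u(t)\le C\int_0^t (t-s)^{-\theta}u(s)\,ds .
\]
The generalised Gronwall lemma of \cite{ye}, used as in \eqref{eq:genGron} with zero forcing term, yields $u\equiv 0$. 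Alternatively, one can use the $\rho$-norm trick of Theorem \ref{thm:exunlinFP}: it gives $d_{\rho,\beta}(v_1,v_2)\le \frac12 d_{\rho,\beta}(v_1,v_2)$ for large $\rho$.

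\textbf{Main obstacle.} The crucial point is Step 1. Without the a priori $L^1$ bound, \eqref{eq:lipgrow} is useless, since $\phi$ need not be Lipschitz in $\shc^\beta$ alone because of the non-local term $K*v$. That bound comes from the linearisation combined with Corollary \ref{cor:lin_sing_PDE}. Once it is available, the rest is routine.
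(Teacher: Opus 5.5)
Your proof is correct and follows essentially the same route as the paper. The non-negativity and $L^1$ bound come from Theorem \ref{thm:lin-lim} combined with Corollary \ref{cor:lin_sing_PDE}, followed by a Schauder--Bernstein estimate on the mild difference and the generalised Gronwall lemma of \cite{ye}. The only difference is cosmetic. You invoke the packaged local Lipschitz bound \eqref{eq:lipgrow} for $\phi$, whereas the paper splits $v g_v - y g_y = v(g_v-g_y)+(v-y)g_y$ via \eqref{eq:sben} and estimates the two pieces with \eqref{eq:FC_al} and \eqref{eq:fg-L1}. These are the same ingredients, since \eqref{eq:lipgrow} is itself derived from Lemma \ref{lm:prep2}.
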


\begin{proof}
  Suppose there are two solutions $v$ and $y$ to \eqref{eq:FPlim}. Then $v$ and $y$ are solutions to the linearised Fokker-Planck PDE \eqref{eq:FPlin} with respectively $g=g_v$ and $g=g_y$, see Theorem \ref{thm:lin-lim}. We recall that $g_v, g_y \in  C_T\shc^{-\alpha}$, where $\alpha$
  was introduced at the beginning of Section \ref{sc:wpnonlinear}. 
  Thus by Corollary \ref{cor:lin_sing_PDE} we have that both $v,y\in B_TL^1$ and they are both non-negative.  
Let us consider the real positive number $\theta:=\frac{\al+\be+1}{2}$. Now using \eqref{eq:sben} with $v_n=y,\ g=g_v,\ g_n=g_y$ and $v_0^n = v_0$ together with \eqref{eq:bony} and \eqref{eq:bonyt} and Lemma \ref{lm:prep}, for $t \in [0,T]$, one gets
\begin{align}\label{eq:first-est1}
	\notag\|v(t)-y(t)\|_{\shc^\be}\leq& c\int_0^t(t-s)^{-\theta}\left[\|v(s)\|_{\shc^{\be}}\|g_v(s)-g_y(s)\|_{\shc^{-\al}}+\|y(s)-v(s)\|_{\shc^\be}\|g_y(s)\|_{\shc^{-\al}}\right]ds\\
	 \leq & c\int_0^t (t-s)^{-\theta}\Big[\|v(s)\|_{\shc^\be}\|F(K*v(s))-F(K*y(s))\|_{\shc^{\be}}\|b(s)\|_{\shc^{-\al}}\\
	\notag &+\|y(s)-v(s)\|_{\shc^\be}\|F(K*y(s))\|_{\shc^{\be}}\|b(s)\|_{\shc^{-\al}}\Big]ds.
\end{align}
We now control the two terms involving $K$. Using bound \eqref{eq:fg-L1} of Lemma \ref{lm:prep0} with $\g=\be$, $g=1$ and $f=y(s)$ with any fixed $s\in[0,T]$, 
we get
\begin{equation}\label{eq:second-est1}
	\|F(K*y(s))\|_{\shc^{\be}} \le C_{F,K} (1 + \Vert y(s) \Vert_{L^1}) \le C_{F,K} (1 + \Vert y \Vert_{B_TL^1}).
\end{equation}
Using \eqref{eq:FC_al} in Lemma \ref{lm:prep2} with $\g=\be$, $f=v(s)$ and $g=y(s)$,  we have
\begin{equation} \label{eq:third-est1}
	\|F(K*v(s))-F(K*y(s))\|_{\shc^{\be}}  \le    C_{F,K} (1 + \Vert v(s) + y(s) \Vert_{L^1}) \Vert v(s) - y(s) \Vert_{ \shc^\be}, \ s\in[0,T].
\end{equation}
Let us take $M = \|v\|_{C_T\shc^\be}$ and define $L:=\max\{\|v\|_{B_TL^1},\|y\|_{B_TL^1}\}$. Inserting \eqref{eq:second-est1} and \eqref{eq:third-est1} into \eqref{eq:first-est1} and rearranging the term involving $\|v(s)-y(s)\|_{\shc^\be}$, for $t \in [0,T]$, we have
\begin{align*}
	\notag\|v(t)-y(t)\|_{\shc^\be}\leq & c\int_0^t (t-s)^{-\theta}\Big[\|v(s)\|_{\shc^\be}   C_{F,K} (1 + \Vert v(s) + y(s) \Vert_{L^1}) \Vert v(s) - y(s) \Vert_{ \shc^\be}\|b(s)\|_{\shc^{-\al}}\\
	\notag &+\|y(s)-v(s)\|_{\shc^\be}C_{F,K} (1 + \Vert y \Vert_{B_TL^1})\|b(s)\|_{\shc^{-\al}}\Big]ds\\
	\leq & c\int_0^t (t-s)^{-\theta}\Big[  M C_{F,K} (1 + 2L) \Vert v(s) - y(s) \Vert_{ \shc^\be}\|b(s)\|_{\shc^{-\al}}\\
	\notag &+\|y(s)-v(s)\|_{\shc^\be}C_{F,K} (1 + L)\|b(s)\|_{\shc^{-\al}}\Big]ds\\
\leq & c\Big[M  (1 + 2L) + (1 + L)\Big]C_{F,K}\|b\|_{C_T\shc^{-\al}}\int_0^t (t-s)^{-\theta}\|y(s)-v(s)\|_{\shc^\be}ds.
\end{align*}
Using now the  generalised Gronwall's inequality \cite{ye}, we get that $v=y$, hence uniqueness.

\end{proof}
\subsection{Existence of the non-linear Fokker-Planck PDE}
\label{ssc:nonlinear}
   


 \begin{remark} \label{rmk:unit_ball}
\begin{enumerate}
\item   We recall that
\begin{equation}\label{unit_ball}
  C_T\shc^{\beta}\cap S_{L^1}=\bigg\{w\in C_T\shc^{\beta}\bigg| \forall t\in[0,T], w (t) \ge 0, \  \int_{\R^d} w(t,x)dx\leq 1\bigg\}.
\end{equation}
\item
  In order to prove existence and uniqueness for the PDE
    (\ref{eq:FPlim})
  through a fixed point argument,
we define now the  ``linearised solution  map''
\begin{equation} \label{eq:linsolmap}
  \tau:  C_T\shc^{\beta}\cap S_{L^1} \rightarrow  C_T\shc^{\beta}\cap S_{L^1},
\end{equation}
as follows.
Given $w \in C_T\shc^{\beta}\cap S_{L^1}$, we set $\tau(w):=v,$
 where  $v \in  C_T\shc^{\beta}  $ is the solution
 of \eqref{eq:FPlin},
 with $g = g_w$, whose existence and uniqueness is provided by Theorem \ref{thm:exunlinFP} 
and it belongs to $ C_T\shc^{\beta}\cap S_{L^1}$
by
Proposition \ref{lin_sing_PDE}.
\item
By  Lemma \ref{lm:B1close} we know that $ C_T\shc^{\beta}\cap S_{L^1}$ is a closed set in $C_T\shc^{\beta}$,
so that \eqref{unit_ball} is an $F$-space  under the uniform topology inherited from $C_T\shc^\beta$.
Therefore $\tau$ maps an $F$-space into itself. 
\end{enumerate}
\end{remark}
We now prove that $\tau$ maps all $w\in  C_T\shc^{\beta}\cap S_{L^1}$ into a ball of $ C_T\shc^{\beta}\cap S_{L^1}$ under the uniform topology inherited from $C_T\shc^\beta$.
In the sequel of the section, for any $M > 0$,  we will make use of the notation
\begin{equation}\label{eq:BMtau}
  B^M:=\left\{w\in  C_T\shc^{\beta}\cap S_{L^1}\vert \|w\|_{C_T\shc^\beta}\leq M\right\}.
\end{equation}
\begin{remark} \label{rmk:Boundtau}
  \begin{enumerate}
    \item For every $M > 0$, the ball $B^M$ defined in \eqref{eq:BMtau}
is a closed set of $C_T\shc^{\beta}.$
Indeed,  let $(u_n)$ in $B^M$ converging to some $u \in  C_T\shc^{\beta}$.
Taking into account Lemma \ref{lm:B1close}, it remains to show that
 $\Vert u \Vert_{C_T \shc^\beta} \le M$. This
easily follows  by Fatou's lemma
applied to the uniform norm of $u$ and
with respect to seminorm $\Vert u \Vert_{\beta}$ defined in
\eqref{eq:holdnorm}.
\item In particular $B^M$ is an $F$-space.
\end{enumerate}

  \end{remark}

 \begin{lemma}\label{lm:boundtau}
  Let Assumptions \ref{ass:beta}, \ref{ass:K} and \ref{ass:F} hold.
   Let $v_0 \in \shc^{\beta+}  \cap L^1$, non-negative, such that $\Vert v_0 \Vert_{L^1} \le 1$.
Let also $0 < \alpha < \beta$ such that $b \in C_T\shc^{-\alpha}.$ 
   Then there exists an increasing  function $M_0: \R_+ \rightarrow \R_+$
   depending on $\al,\be,C_{F,K},
   \|v_0\|_\beta$,
   such that,  $\tau$ defined in \eqref{eq:linsolmap} maps $ C_T\shc^{\beta}\cap S_{L^1}$ into $ B^{M_0}$,
   with $M_0 = M_0(\|b\|_{C_T\shc^{-\alpha}})$.
   \end{lemma}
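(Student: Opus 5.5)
Fix $w \in C_T\shc^{\beta}\cap S_{L^1}$ and let $v=\tau(w)$, the mild solution of \eqref{eq:FPlin} with $g=g_w$. The key point is that the coefficient $g_w$ is bounded in $C_T\shc^{-\alpha}$ uniformly over $w$ in $S_{L^1}$, thanks to the $L^1$-control in Lemma \ref{lm:prep0}. Indeed, applying \eqref{eq:fg-L1} with $\gamma=\beta$, $g=1$ and $f=w(s)$ gives $\|F(K*w(s))\|_{\shc^\beta}\le C_{F,K}(1+\|w(s)\|_{L^1})\le 2C_{F,K}$. Then \eqref{eq:bony} yields
\[
\|g_w\|_{C_T\shc^{-\alpha}}\le 2c\,C_{F,K}\|b\|_{C_T\shc^{-\alpha}},
\]
a bound that is independent of $w$ and, in particular, of $\|w\|_{C_T\shc^\beta}$.

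Next I will estimate $v$ from its mild formulation \eqref{eq:mildlinFP}, following the chain of inequalities in \eqref{eq:sben} with $\theta=\frac{\alpha+\beta+1}{2}<1$. Schauder's estimate \eqref{eq:schauder1} with $\theta=0$ gives $\|P_tv_0\|_{\shc^\beta}\le c\|v_0\|_{\shc^\beta}$; here the H\"older norm is controlled by $\|v_0\|_\infty+\|v_0\|_\beta$, so up to the harmless sup-norm part this is the dependence on $\|v_0\|_\beta$ in the statement. For the integral term, I will combine \eqref{eq:schauder1}, Lemma \ref{lm:bern} and \eqref{eq:bony} to obtain, for every $t\in[0,T]$,
\[
\|v(t)\|_{\shc^\beta}\le c\|v_0\|_{\shc^\beta}+c\,C_{F,K}\|b\|_{C_T\shc^{-\alpha}}\int_0^t(t-s)^{-\theta}\|v(s)\|_{\shc^\beta}\,ds .
\]

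The generalized Gronwall lemma of \cite{ye} (as used in \eqref{eq:genGron}) then gives
\[
\|v\|_{C_T\shc^\beta}\le c\|v_0\|_{\shc^\beta}\,E_{1-\theta}\!\left(c\,C_{F,K}\|b\|_{C_T\shc^{-\alpha}}T^{1-\theta}\Gamma(1-\theta)\right)=:M_0(\|b\|_{C_T\shc^{-\alpha}}).
\]
Since the Mittag-Leffler function $E_{1-\theta}$ in \eqref{eq:Mittag} has nonnegative coefficients, it is increasing on $\R_+$, so $M_0$ is increasing. The function depends only on $\alpha,\beta$ (through $\theta$), on $C_{F,K}$, on $v_0$ and on $T$.

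Membership of $v$ in $S_{L^1}$ is already guaranteed by Proposition \ref{lin_sing_PDE}, since $\|v_0\|_{L^1}\le1$ and $v_0\ge0$. Together with the bound above this gives $\tau(w)\in B^{M_0}$. The one delicate step is the uniform bound on $g_w$. Using Corollary \ref{cr:prep} instead would produce a bound growing with $\|w\|_{C_T\shc^\beta}$ and would destroy the uniformity. It is precisely the restriction of $\tau$ to $S_{L^1}$, combined with Lemma \ref{lm:prep0} and the fact that $K\in\shc^{\beta+}$, that removes this dependence.
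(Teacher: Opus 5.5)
Your proposal is correct and follows essentially the same route as the paper: on $S_{L^1}$ you get the uniform bound $\|g_w\|_{C_T\shc^{-\alpha}}\le 2c\,C_{F,K}\|b\|_{C_T\shc^{-\alpha}}$ from Lemma \ref{lm:prep0} with $g\equiv 1$ and \eqref{eq:bony}, then apply the estimate \eqref{eq:sben} with zero comparison data, and finally the generalised Gronwall lemma, which gives the Mittag-Leffler bound $M_0(x)=c\|v_0\|_{\shc^\beta}E_{1-\theta}(C_{F,K}\,x\,T^{1-\theta}\Gamma(1-\theta))$. Your additional observations are accurate refinements of points the paper leaves implicit: $M_0$ actually depends on the full norm $\|v_0\|_{\shc^\beta}$ and on $T$, and its monotonicity follows from the nonnegative coefficients of $E_{1-\theta}$.
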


  \begin{proof}
    We denote    $\theta:=\frac{\al+\be+1}{2}$.
Let $w \in C_T\shc^{\beta} \cap S_{L^1}$. 
 By Remark \ref{rmk:unit_ball}, point 2.\ we know that
$\tau(w) \in  C_T\shc^{\beta} \cap S_{L^1}$.
It remains to show  that $\|\tau(w)\|_{C_T\shc^\beta} \le
M_0(\|b\|_{C_T\shc^{-\alpha}})$, where
 $M_0$ has to be determined. We set $v:=\tau(w)$.
Taking $v_0^n=g_n=0$ and $g=g_w$ in \eqref{eq:sben}, which is in the proof of
Theorem \ref{thm:exunlinFPCont}, leads to
\begin{equation*}
  \|v(t)\|_{\shc^\beta}\leq c \|v_0\|_{\shc^\beta}+c\|g_w\|_{C_T\shc^{-\alpha}}\int_0^t(t-s)^{-\theta}\|v(s)\|_{\shc^{\beta}}ds,  \  t \in [0,T].
 \end{equation*}
 So, applying again the generalised Gronwall's inequality in \cite{ye}, we obtain
\begin{equation}\label{eq:boundtau}
  \|\tau(w)\|_{C_T\shc^\beta}\leq c\|v_0\|_{\shc^\beta}E_{1-\theta}(c\|g_w\|_{C_T\shc^{-\alpha}}T^{1-\theta}\Gamma(1-\theta)), \end{equation}
where $E_{1-\theta}$ is the Mittag-Leffler function
defined in \eqref{eq:Mittag}.
By pointwise product property \eqref{eq:bony}
and from  bound \eqref{eq:fg-L1} of Lemma \ref{lm:prep0} with $g\equiv 1$ and $f=w(t)$, for all $t\in[0,T]$, we have
\begin{equation*}
\|g_w(t)\|_{\shc^{-\alpha}}
 \leq \|F(K*w(t))\|_{\shc^\beta} \|b(t)\|_{\shc^{-\alpha}} 
 \leq C_{F,K} (1+ \Vert w(t)\Vert_{L^1}) \|b(t)\|_{\shc^{-\alpha}}  
 \le 2  C_{F,K}\|b\|_{C_T\shc^{-\alpha}}.
\end{equation*}
Therefore, taking the supremum over $[0,T]$ we get
\begin{equation}\label{eq:FKwbound}
\|g_w\|_{C_T\shc^{-\alpha}}\leq C_{F,K} \|b\|_{C_T\shc^{-\alpha}}.
\end{equation}
 Thus, inserting \eqref{eq:FKwbound} in \eqref{eq:boundtau} one concludes
 \begin{equation*}
   \|\tau(w)\|_{C_T\shc^\beta}\leq M_0(\|b\|_{C_T\shc^{-\alpha}}),
\end{equation*}
where
$$ M_0(x) =    c\|v_0\|_{\shc^\beta}E_{1-\theta}(C_{F,K} x T^{1-\theta}\Gamma(1-\theta)),
$$
which concludes the proof.   \qedhere
\end{proof}

We remark that, for every $M > 0$, the ball $B^M$ defined in \eqref{eq:BMtau}
is a closed set (therefore an $F$-space) even
with respect to any equivalent distance $ d_{\rho,\be} $, for some $\rho\geq 0$, see Definition \ref{def:rhonorm}.
Let  now $M \ge  M_0(\|b\|_{C_T\shc^{-\alpha}})$,
where $M_0$
is given in Lemma \ref{lm:boundtau}.
Then the mapping $\tau$
defined in  \eqref{eq:linsolmap}, maps obviously $B^M$ into $B^M$,
taking into account Lemma \ref{lm:boundtau}.
With a slight abuse of notation we denote the restriction of  $\tau$ to $B^M$
 by the same letter and we prove below that it is
a contraction under $d_{\rho,\be}$ for some $\rho> 0$.


\begin{proposition} \label{pr:boundtau}
Let Assumptions \ref{ass:beta}, \ref{ass:K} and \ref{ass:F} hold.
Let $v_0 \in \shc^{\beta+}  \cap L^1$
non-negative, such that $\Vert v_0 \Vert_{L^1} \le 1$.
Let also $0 < \alpha < \beta$ such that $b \in C_T\shc^{-\alpha}.$
  Let $M \ge M_0(\|b\|_{C_T\shc^{-\alpha}})$, where $M_0$ is provided by Lemma \ref{lm:boundtau}.
 Then there exists $\rho > 0$, depending on 
  $M, C_{F,K},T, \|b\|_{C_T\shc^{-\al}},\be,\al$, such that
\begin{equation} \label{eq:drho}
  d_{\rho,\be}(\tau(w),\tau(z))\leq \frac{1}{2} d_{\rho,\be}(w,z), \forall w, z
  \in B^M,
  \end{equation}
  where $d_{\rho,\beta}$ was defined in \eqref{rho_dist}.
\end{proposition}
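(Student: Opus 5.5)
The plan is to write the difference $\tau(w)-\tau(z)$ through the mild formulation and close the estimate in the weighted norm, as in step (ii) of the proof of Theorem \ref{thm:exunlinFP}.

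Fix $w,z\in B^M$ and set $v:=\tau(w)$ and $y:=\tau(z)$. Both belong to $B^M\subseteq C_T\shc^\beta\cap S_{L^1}$ by Lemma \ref{lm:boundtau}. They solve \eqref{eq:FPlin} with the same initial datum $v_0$ and with drifts $g_w$ and $g_z$ respectively. With $\theta:=\frac{\al+\be+1}{2}<1$, the chain \eqref{eq:sben} (with $v_0^n=v_0$, $g=g_w$, $g_n=g_z$, $v_n=y$) gives, for $t\in[0,T]$,
\begin{equation*}
\|v(t)-y(t)\|_{\shc^\be}\le c\int_0^t(t-s)^{-\theta}\Big[\|v(s)\|_{\shc^\be}\|g_w(s)-g_z(s)\|_{\shc^{-\al}}+\|v(s)-y(s)\|_{\shc^\be}\|g_z(s)\|_{\shc^{-\al}}\Big]ds .
\end{equation*}
The essential difference from the uniqueness proof (Proposition \ref{pr:uniFPlim}) is that the drift difference now involves $w-z$ and not $v-y$. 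The two pieces are controlled as follows.

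\textbf{Drift difference.} By \eqref{eq:bony} and \eqref{eq:FC_al} of Lemma \ref{lm:prep2} with $\g=\be$, $f=w(s)$, $g=z(s)$, together with $\|w(s)+z(s)\|_{L^1}\le 2$ (since $w,z\in S_{L^1}$), we get
\begin{equation*}
\|g_w(s)-g_z(s)\|_{\shc^{-\al}}\le 3C_{F,K}\|b\|_{C_T\shc^{-\al}}\|w(s)-z(s)\|_{\shc^\be}.
\end{equation*}
Combined with $\|v(s)\|_{\shc^\be}\le M$, the first term is bounded by $cMC_{F,K}\|b\|_{C_T\shc^{-\al}}\|w(s)-z(s)\|_{\shc^\be}$.

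\textbf{Second term.} By \eqref{eq:FKwbound}, $\|g_z\|_{C_T\shc^{-\al}}\le C_{F,K}\|b\|_{C_T\shc^{-\al}}$, so this term is bounded by $C_{F,K}\|b\|_{C_T\shc^{-\al}}\|v(s)-y(s)\|_{\shc^\be}$.

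Next I multiply by $e^{-\rho t}$, split $e^{-\rho t}=e^{-\rho(t-s)}e^{-\rho s}$, and use $\int_0^t(t-s)^{-\theta}e^{-\rho(t-s)}ds\le \Gamma(1-\theta)\rho^{\theta-1}$. This yields
\begin{equation*}
d_{\rho,\be}(v,y)\le A\rho^{\theta-1}\big(M\,d_{\rho,\be}(w,z)+d_{\rho,\be}(v,y)\big),
\end{equation*}
where $A=cC_{F,K}\|b\|_{C_T\shc^{-\al}}\Gamma(1-\theta)$.

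The only delicate point is that $d_{\rho,\be}(v,y)$ appears on both sides. It is finite, because $v,y\in B^M$, so it can be absorbed: choose $\rho$ large enough that $A\rho^{\theta-1}\le \frac14$ and $A\rho^{\theta-1}M\le\frac{3}{8}$. Then
\begin{equation*}
\tfrac34\, d_{\rho,\be}(v,y)\le \tfrac38\, d_{\rho,\be}(w,z),
\end{equation*}
which is \eqref{eq:drho}. Since $\theta<1$, such a $\rho$ exists, and it depends only on $M,C_{F,K},\|b\|_{C_T\shc^{-\al}},\al,\be$. The constant $c$ may also depend on $T$ through the Schauder estimates.
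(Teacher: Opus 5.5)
Your proposal is correct and follows essentially the same route as the paper: you use \eqref{eq:sben} with $v_n=y$, $g=g_w$, $g_n=g_z$, bound the two drift terms via Lemmas \ref{lm:prep0} and \ref{lm:prep2} using $w,z\in S_{L^1}$, split $e^{-\rho t}=e^{-\rho(t-s)}e^{-\rho s}$ to get the factor $\Gamma(1-\theta)\rho^{\theta-1}$, and absorb $d_{\rho,\be}(v,y)$ for $\rho$ large. The only cosmetic difference is that you fix explicit thresholds ($A\rho^{\theta-1}\le\frac14$ and $A\rho^{\theta-1}M\le\frac38$), whereas the paper rearranges and bounds the resulting ratio by $\frac{1}{2M}$; you also explicitly note that $d_{\rho,\be}(v,y)$ is finite, which the absorption step requires.
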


\begin{proof}
Let $w, z \in B^M$ and
   define $v:=\tau(w)$, $y=\tau(z)$. We recall the notation  $g_w$ defined in \eqref{eq:gw}.
  Let us consider the real positive number $\theta:=\frac{\al+\be+1}{2}$. Now using \eqref{eq:sben} with $v_n=y, g=g_w, g_n=g_z$ and $v_0^n = v_0$, for every $t \in [0,T]$,
  one gets
 \begin{equation}
  \|v(t)-y(t)\|_{\shc^\be}\leq c\int_0^t(t-s)^{-\theta}\left[\|v(s)\|_{\shc^{\be}}\|g_w(s)-g_z(s)\|_{\shc^{-\al}}+\|y(s)-v(s)\|_{\shc^\be}\|g_z(s)\|_{\shc^{-\al}}\right]ds. 
 \end{equation}
 Now we multiply  both sides by $e^{-\rho t}$ and taking into account that $v\in B^M$ together with \eqref{eq:bony} and \eqref{eq:bonyt}, we have
\begin{align} \label{eq:first-est}
   \notag   e^{-\rho t} &\|v(t)-y(t)\|_{\shc^\be}\\
  \notag    \leq & c\int_0^t(t-s)^{-\theta}e^{-\rho(t-s)}e^{-\rho s}\left[\|v(s)\|_{\shc^\be}\|g_w(s)-g_z(s)\|_{\shc^{-\al}}+\|y(s)-v(s)\|_{\shc^\be}\|g_z(s)\|_{\shc^{-\al}}\right]ds\\
  \notag   \leq &c\left[M\sup_{s\in[0,T]} e^{-\rho s} \|g_w(s)-g_z(s)\|_{\shc^{-\al}}+\|y-v\|^{(\rho)}_{C_T\shc^\be}\|g_z\|_{C_T\shc^{-\al}}\right]\int_0^t(t-s)^{-\theta}e^{-\rho(t-s)}ds\\ 
      \leq & c\Big[M \sup_{s\in[0,T]} e^{-\rho s} \|F(K*w(s))-F(K*z(s))\|_{\shc^{\be}}\|b(s)\|_{\shc^{-\al}}\\
  \notag &+\|y-v\|^{(\rho)}_{C_T\shc^\be}\|F(K*z)\|_{C_T\shc^{\be}}\|b\|_{C_T\shc^{-\al}}\Big]\rho^{\theta-1}\Gamma(1-\theta), \ t \in [0,T].
\end{align}
We now bound the two terms involving $K$. Using bound \eqref{eq:fg-L1} of Lemma \ref{lm:prep0} with $\g=\be$, $g=1$ and $f=z(t)$ with any fixed $t\in[0,T]$ 
we get
\begin{equation*}
  \|F(K*z(t))\|_{\shc^{\be}} \le C_{F,K} (1 + \Vert z(t) \Vert_{L^1}) \le C_{F,K} (1 + \sup_{t\in[0,T]} \Vert z(t) \Vert_{L^1})\le 2 C_{F,K},
\end{equation*}
so that taking the sup over $t$ we have
\begin{equation} \label{eq:second-est}
 \|F(K*z)\|_{C_T\shc^\be}\le 2 C_{F,K}.
\end{equation}
Using \eqref{eq:FC_al} in Lemma \ref{lm:prep2} with $\g=\be$, $f=w(s)$ and $g=z(s)$, for
$s \in [0,T] $, we have
\begin{equation*}
 \|F(K*w(s))-F(K*z(s))\|_{\shc^{\be}}  \le    C_{F,K} (1 + \Vert w(s) + z(s) \Vert_{L^1}) \Vert w(s) - z(s) \Vert_{ \shc^\be} \le 3 C_{F,K} \Vert w(s) - z(s) \Vert_{ \shc^\be},
\end{equation*}
  and multiplying both sides by $e^{-\rho s}$ and taking the sup over $s\in[0,T]$ we get 
  \begin{equation} \label{eq:third-est}
 \sup_{s\in[0,T]} {e^{-\rho s}}  \|F(K*w(s))-F(K*z(s))\|_{\shc^\be}  \le  3 C_{F,K} \Vert w- z \Vert^{(\rho)}_{C_T \shc^\be}.
 \end{equation}
Inserting \eqref{eq:second-est} and \eqref{eq:third-est} into \eqref{eq:first-est} and
taking the supremum over the set $[0,T]$ we obtain
\begin{align*}
  \|y-v\|^{(\rho)}_{C_T\shc^\be} &\leq  \rho^{\theta-1}\Gamma(1-\theta) \left[3 c M C_{F,K}   \|b\|_{C_T\shc^{-\al}}  \Vert w - z \Vert^{(\rho)}_{C_T \shc^\be} + 2 c C_{F,K} \|y-v\|^{(\rho)}_{C_T\shc^\be} \|b\|_{C_T\shc^{-\al}}\right].
\end{align*}
Rearranging the term involving $\|y-v\|^{(\rho)}_{C_T\shc^\be}$ on the right hand side we get
\begin{align*}
	\|y-v\|^{(\rho)}_{C_T\shc^\be}\left[1- \rho^{\theta-1}\Gamma(1-\theta)2cC_{F,K} \|b\|_{C_T\shc^{-\al}}\right] &\leq  \rho^{\theta-1}\Gamma(1-\theta) \left[3 c M C_{F,K}   \|b\|_{C_T\shc^{-\al}}  \Vert w - z \Vert^{(\rho)}_{C_T \shc^\be} \right].
\end{align*}
Now we choose $\rho$ such that $1- \rho^{\theta-1}\Gamma(1-\theta)2cC_{F,K} \|b\|_{C_T\shc^{-\al}}> 0$ and  we can divide and we get 

\begin{equation*}
	\|y-v\|^{(\rho)}_{C_T\shc^\be} \leq \frac{\rho^{\theta-1}\Gamma(1-\theta) 3 c  C_{F,K}  \|b\|_{C_T\shc^{-\al}}}{1- \rho^{\theta-1}\Gamma(1-\theta)2cC_{F,K} \|b\|_{C_T\shc^{-\al}}}M  \Vert w - z \Vert^{(\rho)}_{C_T \shc^\be}. 
\end{equation*}
We now can choose $\rho $ big enough so that 
\begin{equation*}
\frac{\rho^{\theta-1}\Gamma(1-\theta) 3 c  C_{F,K}  \|b\|_{C_T\shc^{-\al}}}{1- \rho^{\theta-1}\Gamma(1-\theta)2cC_{F,K} \|b\|_{C_T\shc^{-\al}}}\leq\frac1{2M}
\end{equation*}
and this guarantees the validity of \eqref{eq:drho}. 
\medskip
%
\end{proof}

\begin{theorem}\label{thm:WEFPPDE}
   Let Assumptions \ref{ass:beta}, \ref{ass:K} and \ref{ass:F} hold true.
  Let $v_0 \in \shc^{\beta+}$ be non-negative function such that $\Vert v_0 \Vert_{L^1} \le 1.$ 
Then there exists a (unique) solution in $C_T \shc^{\beta} \cap S_{L^1}$ to the non-local singular non-linear Fokker-Planck PDE (\ref{eq:FPlim}).
\end{theorem}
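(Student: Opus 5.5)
The plan is to obtain the solution as the fixed point of the linearised solution map $\tau$ from Remark \ref{rmk:unit_ball}, restricted to the ball $B^M$ with $M := M_0(\|b\|_{C_T\shc^{-\alpha}})$ given by Lemma \ref{lm:boundtau}. Fix $0<\alpha<\beta$ with $b\in C_T\shc^{-\alpha}$, which Assumption \ref{ass:beta} provides. First I will check that $B^M$ is nonempty. The function $t\mapsto P_t v_0$ lies in $C_T\shc^\beta$ by Remark \ref{rmk:lmIssoglio}, it is non-negative, and it has $L^1$-norm at most $1$. Alternatively, $\tau(w)$ lies in $B^M$ for any $w\in C_T\shc^\beta\cap S_{L^1}$, for instance $w\equiv 0$. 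By Lemma \ref{lm:boundtau}, $\tau$ maps $C_T\shc^\beta\cap S_{L^1}$, and hence $B^M$, into $B^M$.

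Next I endow $B^M$ with the metric $d_{\rho,\beta}$, choosing $\rho$ as in Proposition \ref{pr:boundtau}. The distance $d_{\rho,\beta}$ is equivalent to the uniform distance on $C_T\shc^\beta$, so $B^M$, which is closed in $C_T\shc^\beta$ by Remark \ref{rmk:Boundtau}, is a complete metric space. Proposition \ref{pr:boundtau} shows that $\tau$ is a $\frac12$-contraction on it. The Banach fixed point theorem then gives a unique $v\in B^M$ with $\tau(v)=v$.

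It remains to identify the fixed point as a solution of \eqref{eq:FPlim}. By definition, $v=\tau(v)$ is the weak (equivalently mild) solution of the linear equation \eqref{eq:FPlin} with $g=g_v=F(K*v)b$. I therefore need to show that
\[
v(s)\cdot\big(F(K*v(s))\,b(s)\big)=\big(v(s)F(K*v(s))\big)\cdot b(s),
\]
so that the weak formulation \eqref{eq:WsolFP} holds. This is exactly the associativity of the pointwise product in Remark \ref{rmk:associativity}, with $\Gamma=b(s)\in\shc^{-\alpha}$ and $l_1,l_2$ taken to be $v(s)$ and $F(K*v(s))$. Both $l_i$ lie in $\shc^\beta$, by Lemma \ref{lm:prep} and Corollary \ref{cr:prep}. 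One also needs the commutativity of the product of the two functions $l_1,l_2$, which is clear by the same smoothing argument. This step is just the reverse of Theorem \ref{thm:lin-lim} item 1.

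Finally, $v\in S_{L^1}$ holds because $v\in B^M$. Uniqueness among all solutions in $C_T\shc^\beta$ follows from Proposition \ref{pr:uniFPlim}. The main obstacle, the lack of global Lipschitz bounds for $\phi$, has already been handled by restricting to $S_{L^1}$ and by the a priori $L^1$ estimates of Proposition \ref{lin_sing_PDE}. What remains is to verify carefully that the constant $\rho$ in Proposition \ref{pr:boundtau} depends only on $M$ and the data, which it does.
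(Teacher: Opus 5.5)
Your proposal is correct and is essentially the paper's proof. You restrict $\tau$ to $B^{M_0}$ with $M_0=M_0(\|b\|_{C_T\shc^{-\alpha}})$ from Lemma \ref{lm:boundtau}, get a $d_{\rho,\beta}$-contraction from Proposition \ref{pr:boundtau} and a fixed point by Banach's theorem, identify it as a solution of \eqref{eq:FPlim} through the associativity in Remark \ref{rmk:associativity}, and take uniqueness from Proposition \ref{pr:uniFPlim}. The one slip is minor: your first nonemptiness argument via $P_\cdot v_0$ does not check the bound $\|P_\cdot v_0\|_{C_T\shc^\beta}\le M$, but your alternative $\tau(0)\in B^M$ settles it.
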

\begin{proof}
  Let $\alpha \in (0, \beta)$ such that $b \in C_T\shc^{-\alpha}$. 
   Let $M_0 := M_0(\|b\|_{C_T\shc^{-\alpha}})$, where $M_0$ is provided by Lemma \ref{lm:boundtau},
  so that $\tau: B^{M_0} \mapsto B^{M_0}$. By Proposition \ref{pr:boundtau}
  $\tau$ is a contraction with respect to $d_{\rho,\beta}$
  for some suitable $\rho$. By the Banach  fixed point
  theorem $\tau$ admits a (unique) fixed point $v \in B^{M_0}$.
  By definition of $\tau$,  for $t \in [0,T]$,  we write 
  $$ v(t)= P_{t}v_0-\int_0^t P_{t-s}\left({\rm div}\left(v(s)\cdot g_v (s)
    \right)\right)ds =
   P_{t}v_0-\int_0^t P_{t-s}\left({\rm div}\left(v(s) \cdot (F(K*v(s))) \cdot  b(s) \right)\right)ds, $$
  taking into account the associativity property
  $$ v(s) g_v(s) =  v(s) \cdot (F(K*v(s))) \cdot  b(s)) =
  (v(s) F(K*v(s))) \cdot  b(s), s \in [0,T], $$
  see Remark \ref{rmk:associativity} 2. and 3., 
 which establishes existence.
Uniqueness is a particular case of Proposition \ref{pr:uniFPlim}.

 \end{proof}
\begin{remark}\label{rkm:WEFPPDE}
  Let $\alpha < \beta$ such that $b \in C_T\shc^{-\alpha}$.
  A side-effect of previous proof is that the unique
  aforementioned solution belongs to $B^{M_0(\|b\|_{C_T\shc^{-\alpha}})}$,
  where $M_0$ is the function defined in Lemma \ref{lm:boundtau}.
\end{remark}

\subsection{Continuity of the solutions with respect to the drift}

\label{sc:drift}

We end this  section by investigating the continuity 
of a solution to the non-local singular non-linear Fokker-Planck equation
with respect to the $b$ component of the drift.
Given a sequence $(b^n)$ converging to $b$ in $C_T\shc^{(-\beta)+}$,
Theorem \ref{thm:WEFPPDE} states the existence of
unique solutions $(v^n)$ to
\begin{equation}\label{eq:PDEdm}
   \left\{
  \begin{array}{l}
      \partial_t v^n=\frac12\Delta v^n-{\rm div}(v^nF(K*v^n)b^n)\\
      v^n(0)=v_0\in\shc^{\be+} .    
  \end{array}  
  \right. 
\end{equation} 
A typical example
is given by $b^n(t):=p_{\frac{1}{n}}*b(t)=P_{\frac{1}{n}}b(t),   t \in [0,T]$,
so that 
$b$ is smoothed through the convolution with the heat semigroup.

\begin{proposition}\label{pr:PDEdmcont}
	\begin{enumerate}
        \item[(i)]  Let $0<\al<\be<1/2$ such that for $i =1,2$,
          $b^i\in C_T\shc^{-\al}$.
                  Let $v_0 \in \shc^{\be+}$, such that   $v_0$ is a non-negative function such that $\Vert v_0 \Vert_{L^1} \le 1.$
		For $i = 1,2$, let
                $v^i$
                be the (mild) solution of (\ref{eq:FPlim}) with $b=b^i$.
		Then there exists a increasing function $\ell:\R^+ \rightarrow \R^+$, depending
		$\beta, \alpha, T, C_{F, K}, \Vert v_0\Vert_{\shc^\beta}$,
                such that 
		\begin{equation}\label{eq:ellGronwall}
			\|v^1-v^2\|_{C_T \shc^{\al}}\leq \ell(\|b^1\|_{C_T\shc^{-\alpha}}  \lor\|b^2\|_{C_T\shc^{-\alpha}})\|b^1-b^2\|_{C_T\shc^{-\alpha}},
		\end{equation}
		for all $t\in[0,T]$.
		\item[(ii)] Let $(b^n)$ be a sequence in $C_T\shc^{(-\be)+}$ and $v^n$ be the solution of (\ref{eq:PDEdm}) with $b=b^n$ and $v$ be the solution to (\ref{eq:FPlim}). If $b^n\to b$ in $C_T\shc^{(-\be)+}$ then $v^n \to v $ in $C_T\shc^\be$.
	\end{enumerate}
\end{proposition}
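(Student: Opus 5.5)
For item (i), write $v^1, v^2$ in their mild forms \eqref{eq:MsolFP} with the same initial datum $v_0$, and recall that each $v^i$ solves the linear equation \eqref{eq:FPlin} with $g = g^i := F(K*v^i)b^i$ (Theorem \ref{thm:lin-lim}). By Theorem \ref{thm:lin-lim} and Theorem \ref{thm:WEFPPDE}, both $v^i$ lie in $S_{L^1}$. By Remark \ref{rkm:WEFPPDE}, $\|v^i\|_{C_T\shc^\beta}\le M_0(\|b^i\|_{C_T\shc^{-\alpha}})$, and since $M_0$ is increasing we may bound both by $M:=M_0(\|b^1\|\lor\|b^2\|)$.

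The estimate I aim for is \eqref{eq:sben} with $v = v^1$, $v_n = v^2$, $g = g^1$, $g_n = g^2$ and $v_0^n = v_0$. The main task is to bound $\|g^1(s)-g^2(s)\|_{\shc^{-\alpha}}$. Split it as
\[
F(K*v^1)(b^1-b^2) + \bigl(F(K*v^1)-F(K*v^2)\bigr)b^2 .
\]
Estimate each piece with \eqref{eq:bony}. For the first piece use Lemma \ref{lm:prep0} with $g=1$ and the $L^1$ bound $\le 1$, which gives $\|F(K*v^1)\|_{\shc^\beta}\le 2C_{F,K}$. For the second piece use Lemma \ref{lm:prep2}, which gives $\le 3C_{F,K}\|v^1-v^2\|_{\shc^\beta}$. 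Also $\|g^2(s)\|_{\shc^{-\alpha}}\le 2C_{F,K}\|b^2\|$. Combining,
\[
\|v^1(t)-v^2(t)\|_{\shc^\beta}\le cMC_{F,K}\|b^1-b^2\|_{C_T\shc^{-\alpha}}\tfrac{T^{1-\theta}}{1-\theta}+c\,C_{F,K}(M+1)(\|b^1\|\lor\|b^2\|)\int_0^t(t-s)^{-\theta}\|v^1(s)-v^2(s)\|_{\shc^\beta}ds,
\]
with $\theta=\frac{\alpha+\beta+1}{2}<1$. The generalised Gronwall lemma of \cite{ye} then gives
\[
\|v^1-v^2\|_{C_T\shc^\beta}\le \ell(\|b^1\|\lor\|b^2\|)\,\|b^1-b^2\|_{C_T\shc^{-\alpha}},
\]
where $\ell(x)= cM_0(x)C_{F,K}\frac{T^{1-\theta}}{1-\theta}E_{1-\theta}\bigl(cC_{F,K}(M_0(x)+1)x\,T^{1-\theta}\Gamma(1-\theta)\bigr)$. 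This $\ell$ is increasing because $M_0$ and $E_{1-\theta}$ are. Since $\shc^\beta\hookrightarrow\shc^\alpha$, the $C_T\shc^\alpha$ norm in \eqref{eq:ellGronwall} follows, and in fact we obtain the stronger $\shc^\beta$ bound.

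For item (ii), convergence $b^n\to b$ in $C_T\shc^{(-\beta)+}$ gives, by the characterisation of inductive limits (Lemma B.2 of \cite{issoglio_russoMK}), some $\alpha<\beta$ with $b^n\to b$ in $C_T\shc^{-\alpha}$. In particular $\sup_n\|b^n\|_{C_T\shc^{-\alpha}}=:R<\infty$. Apply (i) with $b^1=b$, $b^2=b^n$; the $\shc^\beta$ version of the bound yields
\[
\|v^n-v\|_{C_T\shc^\beta}\le\ell(R)\,\|b^n-b\|_{C_T\shc^{-\alpha}}\to0 .
\]

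The main obstacle is securing uniform a priori control: an $L^1$ bound on both solutions, needed so that Lemmas \ref{lm:prep0} and \ref{lm:prep2} give constants independent of the solutions, and a $\shc^\beta$ bound on $v^1$ depending monotonically on $\|b^i\|$. Both come from Theorem \ref{thm:WEFPPDE} and Remark \ref{rkm:WEFPPDE}, so I expect no genuine difficulty beyond bookkeeping.
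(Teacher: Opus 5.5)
Your proof is correct and shares the paper's skeleton: the mild form, Schauder--Bernstein, \eqref{eq:bony}, the a priori facts $v^i\in S_{L^1}$ and $\|v^i\|_{C_T\shc^\be}\le M_0(\cdot)$ from Remark \ref{rkm:WEFPPDE}, and then the generalised Gronwall lemma. Where you differ is in how you split the nonlinear term.

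The paper never linearises. It writes $\phi(v^1)b^1-\phi(v^2)b^2=\phi(v^1)(b^1-b^2)+(\phi(v^1)-\phi(v^2))b^2$. It then controls the two pieces with the growth bound \eqref{eq:preplin} and the local Lipschitz bound \eqref{eq:lipgrow} of Proposition \ref{pr:C_al}, which were proved for exactly this purpose. This avoids any appeal to associativity, but \eqref{eq:lipgrow} brings in the $\shc^\be$ a priori bounds of both $v^1$ and $v^2$.

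You instead use associativity, through Theorem \ref{thm:lin-lim}, to treat $v^i$ as solutions of the linear equation with drift $g^i=F(K*v^i)b^i$. You then reuse the stability estimate \eqref{eq:sben} and need only the scalar estimates of Lemmas \ref{lm:prep0} and \ref{lm:prep2} on $F(K*\cdot)$. This is exactly the pattern of Propositions \ref{pr:uniFPlim} and \ref{pr:boundtau}, so it makes the three Gronwall/contraction arguments in the paper uniform. It also needs the $\shc^\be$ bound only for $v^1$, because $\|g^2\|_{\shc^{-\al}}$ is controlled by the $L^1$ bound alone.

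You are also right to state the conclusion as a bound on $\|v^1-v^2\|_{C_T\shc^\be}$. The norm $C_T\shc^\al$ written in \eqref{eq:ellGronwall} would not give the $C_T\shc^\be$ convergence claimed in item (ii). The paper's own argument in fact runs Gronwall in the $\shc^\be$ norm and uses \eqref{continuity_seq_PDE} in that norm.
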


\begin{remark} 
  Above we refer to  the solutions
  belonging to $C_T\shc^\be\cap S_{L^1}$, see Theorem \ref{thm:WEFPPDE}.
\end{remark}

    \begin{proof}[Proof of Proposition \ref{pr:PDEdmcont}] 
      {\it (i)}
By Schauder and Bernstein inequalities (Lemmata  \ref{lm:schauder} and \ref{lm:bern}) and recalling the notation \eqref{def:phi}, we get
      \begin{align} \label{eq:intBS}
           \notag   \|v^1(t)-v^2(t)\|_{\shc^\be}\leq&\left\|\int_0^t P_{t-s}\text{div}[\phi(v^1(s))b^1(s)-\phi(v^2(s))b^2(s)]ds\right\|_{\shc^\be}\\
           \notag   \leq&c\int_0^t (t-s)^{-\frac{\al+\be+1}{2}}\left\|\text{div}[\phi(v^1(s))b^1(s)-\phi(v^2(s))b^2(s)]\right\|_{\shc^{-\al-1}}ds\\
               \leq&c\int_0^t (t-s)^{-\frac{\al+\be+1}{2}}\left\|\phi(v^1(s))b^1(s)-\phi(v^2(s))b^2(s)\right\|_{\shc^{-\al}}ds.
      \end{align}
         In order to bound the term in the $\shc^{-\al}$-norm inside previous integral we make use of the pointwise product (\ref{eq:bony}), the Lipschitz and linear growth property of $\phi$ contained
         in \eqref{eq:preplin} and \eqref{eq:lipgrow} of Proposition \ref{pr:C_al}, with $\gamma = \beta, f = v^1, g= v^2$. 
For $s\in[0,T]$,  we get         
         \begin{align*}
\left\|\phi(v^1(s))b^1(s)-\phi(v^2(s))b^2(s)\right\|_{\shc^{-\al}}=&\left\|\phi(v^1(s))b^1(s)+\phi(v^1(s))b^2(s)-\phi(v^1(s))b^2(s)-\phi(v^2(s))b^2(s)\right\|_{\shc^{-\al}}\\  
\leq&  \left\|\phi(v^1(s))b^1(s)-\phi(v^1(s))b^2(s)\right\|_{\shc^{-\al}}+\left\|\phi(v^1(s))b^2(s)-\phi(v^2(s))b^2(s)\right\|_{\shc^{-\al}}\\
\leq&\|\phi(v^1(s))\|_{\shc^{\be}}\|b^1(s)-b^2(s)\|_{\shc^{-\al}}+\|\phi(v^1(s))-\phi(v^2(s))\|_{\shc^{\be}}\|b^2(s)\|_{\shc^{-\al}}\\
\leq& C_{F,K}[\|v^1\|_{C_T\shc^{\be}}(1 + \|v^1\|_{B_T L^1})\|b^1-b^2\|_{C_T\shc^{-\al}} \\  
&+ \|b^2\|_{C_T\shc^{-\al}}(1+\|v^1\|_{C_T\shc^{\be}}+\|v^2\|_{C_T\shc^{\be}}) (1 + \|v^1 + v^2\|_{B_T L^1})  \|v^1(s)-v^2(s)\|_{\shc^{\be}}]\\
\leq&  C_{F,K}[ \|v^1\|_{C_T\shc^{\be}} \|b^1-b^2\|_{C_T\shc^{-\al}}  \\
      &+ \|b^2\|_{C_T\shc^{-\al}}(1+\|v^1\|_{C_T\shc^{\be}}+\|v^2\|_{C_T\shc^{\be}})  \|v^1(s)-v^2(s)\|_{\shc^{\be}}],
         \end{align*}
         taking into account that $v^1, v^2$ belong to $S_{L^1}$.
         Let $M_0$
         be the function coming from Lemma \ref{lm:boundtau}. 
Owing to Remark \ref{rkm:WEFPPDE}, previous inequalities yield
         \begin{align} \label{eq:ell12}
           \notag   \left\|\phi(v^1(s))b^1(s)-\phi(v^2(s))b^2(s)\right\|_{\shc^{-\al}}  \le& C_{F,K}\left(M_0(\Vert b^1\Vert_{C_T \shc^{-\al}} \vee \Vert b^2\Vert_{C_T \shc^{-\al}}) \|b^1-b^2\|_{C_T\shc^{-\al}} \right.\\
      \notag     &+ \left. \|b^2\|_{C_T\shc^{-\al}}
(1 + 2 M_0(\Vert b^1\Vert_{C_T \shc^{-\al}} \vee \Vert b^2\Vert_{C_T \shc^{-\al}})) 
                      \|v^1(s)-v^2(s)\|_{\shc^{\be}}\right) \\
           \notag  =&\ell_1(\Vert b^1\Vert_{C_T \shc^{-\al}} \vee \Vert b^2\Vert_{C_T \shc^{-\al}}))  \|b^1-b^2\|_{C_T\shc^{-\al}} \\
           &+ \ell_2(\Vert b^1\Vert_{C_T \shc^{-\al}} \vee \Vert b^2\Vert_{C_T\shc^{-\al}})  \|v^1(s)-v^2(s)\|_{\shc^{\be}},    
             \end{align}
  where
  $$\ell_1(x) := C_{F,K} M_0(x), \ \ell_2(x) := C_{F,K} x (1 + 2 M_0(x)),
  \ x \ge 0,$$
  are two increasing functions.
  Inserting \eqref{eq:ell12} into \eqref{eq:intBS}, keeping the notation used previously for $1-\theta=\frac{1-\al-\be}{2}$,  for all $t \in [0,T]$, gives
\begin{align*}
  \|v^1(t)-v^2(t)\|_{\shc^{\al}}\leq& 
               c  \frac{T^{1-\theta}}{1-\theta}   \ell_1(\Vert b^1\Vert_{C_T \shc^{-\al}} \vee \Vert b^2\Vert_{C_T \shc^{-\al}})  \|b^1-b^2\|_{C_T\shc^{-\al}} \\
&+ c \ell_2(\Vert b^1\Vert_{C_T \shc^{-\al}} \vee \Vert b^2\Vert_{C_T \shc^{-\al}}) \int_0^t (t-s)^{-\frac{\al+\be+1}{2}}  \|v^1(s)-v^2(s)\|_{\shc^{\be}} ds.  
  \end{align*}
 At this point, defining
$$ \ell(x) :=  c  \frac{T^{1-\theta}}{1-\theta}   \ell_1(x) E_{1-\theta}(c\ell_2(x)), x \ge 0,$$
the previous inequality finally establishes  \eqref{eq:ellGronwall},
making use of the  generalised Gronwall's inequality \cite{ye}.

{\it (ii)} Let $v^n$ (resp.\ $v$)    be the  solution of (\ref{eq:PDEdm}) (resp.\ \eqref{eq:FPlim})
given by Theorem \ref{thm:WEFPPDE}.
Let $\al<\be$, such that $b^n\to b$ in $C_T\shc^{-\al}$. We can now apply {\it item  (i)}  to $v^1:= v$ and $v^2:=v^n$ to obtain
        \begin{equation}\label{continuity_seq_PDE}
          \|v-v^n\|_{C_T\shc^\be}\leq \ell(\|b\|_{C_T\shc^{-\al}}\lor\|b^n\|_{C_T\shc^{-\al}})\|b-b^n\|_{C_T\shc^{-\al}}.
        \end{equation}
        Now we observe that $\sup_n\|b^n\|_{C_T\shc^{-\al}}<\infty$ because $b^n\to b$ in $C_T\shc^{-\al}$, and so 
        \begin{equation*}
            \ell(\|b\|_{C_T\shc^{-\al}}\lor\|b^n\|_{C_T\shc^{-\al}})\leq \ell(\|b\|_{C_T\shc^{-\al}}\lor\sup_n\|b^n\|_{C_T\shc^{-\al}}),
        \end{equation*}
        since $\ell$ is increasing. Therefore
        \begin{equation*}
          \|v-v^n\|_{C_T\shc^{\be}}\leq \ell(\|b\|_{C_T\shc^{-\al}}\lor\sup_n\|b^n\|_{C_T\shc^{-\al}})
          \|b-b^n\|_{C_T\shc^{-\al}}\to 0,\text{ as } n \to \infty.
        \end{equation*}
       Finally this implies  the convergence $v^n\to v$ in $C_T\shc^{\be}$. 
    \end{proof}


    \section{An associated singular McKean stochastic differential equation}

\label{sec:PRFP}

In this section we still suppose the validity of Assumptions \ref{ass:beta},
\ref{ass:K} and \ref{ass:F}.
Here $v_0 \in \shc^{\beta +}$ is a non-negative function such that $\Vert v_0 \Vert_{L^1} = 1.$
The aim here is to study well-posedness
of an McKean SDE with singular drift and non-local law dependence
of the type
 \begin{equation}\label{eq:MKSDE}
 \left\{
 \begin{array}{l}
 dX_t=F(K*v(t,X_t))b(t,X_t)dt+dW_t\\
 v(t,\cdot) \text{ is the law density of }X_t\\
   X_0\sim v_0.
 \end{array}
 \right.
 \end{equation} 
The basic tool employed is 
the solution  to  the non-local singular non-linear Fokker-Planck PDE \eqref{eq:FPlim}: in particular we will construct a stochastic process whose
marginal law densities
are solutions to the PDE \eqref{eq:FPlim}.

Since the McKean SDE  \eqref{eq:MKSDE} is singular due to the presence of the distributional term $b\in C_T\shc^{(-\be)+}$, we first have to explain how to interpret  equation \eqref{eq:MKSDE}, which is only formal at this stage.
If $v$ in the first line of \eqref{eq:MKSDE} were given, this equation would be a non-McKean SDE with distributional drift, say $B$, like in \cite{flandoli_et.al14, issoglio_russoMPb}.
The modern formulation of  singular SDEs with distributional drift $B$ is done via the notion of {\em rough martingale problem}, which will be recalled in Section \ref{sc:rMP}.
A solution to the McKean SDE  \eqref{eq:MKSDE} will  be a solution of a non-McKean SDE (via rough martingale problem) with  distributional drift $B = F(K\ast v) b$, where $(v(t,\cdot))$ are the law densities of the unknown process. The well-posedness of the singular McKean SDE \eqref{eq:MKSDE}  will be studied in Section \ref{ssc:MK}.


\subsection{The rough martingale problem}\label{sc:rMP}

We start by recalling some known facts about the rough martingale problem, which is a way to give a mathematical meaning to SDEs with distributional drifts of the form
 \begin{equation}\label{eq:dSDE}
  \left\{
 \begin{array}{l}
 dX_t=B(t,X_t)dt+dW_t\\
 X _0\sim \nu,
 \end{array}
 \right.
 \end{equation}
 where $(W_t)$ is a $d$-dimensional Brownian motion,  and $B\in C_T\shc^{(-\be)+}$ and  $\nu\in\shp(\R^d)$ are 
given. 
 We refer the reader to \cite{flandoli_et.al14, issoglio_russoMPb} for more details.

Let us explain the idea behind the rough martingale problem by starting from the classical Stroock-Varadhan martingale problem, see Chapter 6 in \cite{stroock_varadhan}, which is studied when $B$ is a locally bounded  Borel  function.  In this case, if one takes $f\in C^{1,2}([0,T]\times \R^d)$, an application of It\^o's formula shows that the process
  $$M^f_t:= f(t,X_t) - f(0,X_0) - \int_0^t \shl f(r,X_r) dr, t \in [0,T]$$
    is a local martingale, where $\shl$ is the generator of a
    solution  $(X_t)$  to \eqref{eq:dSDE}, which is given by
 \begin{equation}\label{eq:L}
    \shl f:=\partial_t f+\frac12\Delta f+\nabla f \cdot B.
    \end{equation} 
   The space $  C^{1,2}([0,T]\times \R^d)$ is called domain of test functions for the Stroock-Varadhan martingale problem.
   
        When $B$ is a distribution living in the space $C_T\shc^{(-\be)+}$, we need to define a different domain of test functions because in general $\shl f $ is still a distribution  and thus the integral term $ \int_0^t \shl f(r,X_r) dr$ is a priori ill-posed. It turns out that a possible domain is constituted by the set of solutions $f$ to the
    Kolmogorov PDE
\begin{equation} \label{eq:PDE}
  \shl f = g, \ f(T,\cdot) = f_T,
\end{equation}
  for a  suitable class of functions $g, f_T$. This is illustrated in  Definition  \ref{def:domains} below, which is taken from \cite{issoglio_russoMPb}. This definition makes use of the weak solution  of the singular Kolmogorov PDE \eqref{eq:PDE} and its well-posedness and can be found in \cite{issoglio_russoPDEa}. They are recalled below for ease of reading.

\begin{definition}\label{def:weak} 
	Let  $g \in C_T  \shc^{(-\beta)+}  $, $f_T \in  \shc^{(1+\beta)+}$.
	We say that $f  \in C_T  \shc^{(1+\beta)+}$ is a weak solution of \eqref{eq:PDE} if, for all $\varphi \in \mathcal S(\R^d)$,  $f$ satisfies
	\begin{align}\label{eq:weak}
		&\langle  f_T,\varphi\rangle -\langle f(t),\varphi\rangle +\int_t^T \langle f(s) ,\frac12\Delta \varphi \rangle ds+ \int_t^T \langle \nabla f(s) B(s),\varphi\rangle ds    
		= \int_t^T\langle g(s),\varphi\rangle  d s, 
	\end{align}
	for all $t\in[0,T]$.
\end{definition}
\begin{remark} \label{rmk:Interp}
  In the second integral on the right-hand side involves the evaluation of the
  distribution  $\nabla f(s) B(s)$ which is a pointwise product and the test function  $\varphi$.
  \end{remark}

The following result is a consequence of Theorem 4.7 (ii) and Remark 4.8, together with Proposition 4.5 of \cite{issoglio_russoPDEa},
the latter one stating the equivalence between mild and weak solutions
of \eqref{eq:PDE}.

\begin{proposition} \label{prop:EU}
  Let  $g \in C_T  \shc^{(-\beta)+}  $, $f_T \in  \shc^{(1+\beta)+}$.
  There exists a unique weak solution $ f  \in C_T  \shc^{(1+\beta)+}$ of \eqref{eq:PDE}.
 \end{proposition}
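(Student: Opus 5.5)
The plan is to obtain Proposition \ref{prop:EU} by passing to mild solutions and running a fixed point argument. The argument mirrors the proof of Theorem \ref{thm:exunlinFP}, with the regularity indices shifted up by one.

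\textbf{Reduction to the mild formulation.} Set $u(t):=f(T-t)$ and $\hat B(t):=B(T-t)$, $\hat g(t):=g(T-t)$. Then \eqref{eq:PDE} becomes a forward problem $\partial_t u=\frac12\Delta u+\nabla u\cdot \hat B-\hat g$ with $u(0)=f_T$. Its mild form is
\begin{equation*}
u(t)=P_t f_T+\int_0^t P_{t-s}\big(\nabla u(s)\cdot\hat B(s)-\hat g(s)\big)ds .
\end{equation*}
Mild and weak solutions are equivalent, either by Proposition 4.5 of \cite{issoglio_russoPDEa} or by the same argument as in Lemma \ref{lm:W-M}, which uses Lemma 2.1 of \cite{issoglio_russoMK}. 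So it suffices to prove existence and uniqueness of a mild solution in $C_T\shc^{(1+\beta)+}$.

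\textbf{Choice of indices.} Fix $\alpha<\beta$ such that $B,g\in C_T\shc^{-\alpha}$, which is possible by the characterisation of $C_T\shc^{(-\beta)+}$ recalled after Definition \ref{def:besov}. Fix also $\nu>1+\beta$ with $f_T\in\shc^{\nu}$. Then choose $\gamma\in(1+\alpha,\min(\nu,2-\alpha))$ with $\gamma>1+\beta$; this interval is nonempty because $\beta<1/2$, so $1+\beta<2-\beta<2-\alpha$.

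\textbf{Contraction and uniqueness.} On $C_T\shc^\gamma$ consider the map $\mathcal J(u)$ given by the right-hand side of the mild form.
\begin{itemize}
\item[(i)] Since $u\in\shc^\gamma$, Lemma \ref{lm:bern} gives $\nabla u\in\shc^{\gamma-1}$. Because $\gamma-1>\alpha$, the product $\nabla u\cdot\hat B$ lies in $\shc^{-\alpha}$ by \eqref{eq:bony}, with norm at most $c\|u\|_{\shc^\gamma}\|B\|_{C_T\shc^{-\alpha}}$.
\item[(ii)] Schauder's estimate \eqref{eq:schauder1} with $\theta=(\gamma+\alpha)/2<1$ gives $\|P_{t-s}h\|_{\shc^\gamma}\le c(t-s)^{-\theta}\|h\|_{\shc^{-\alpha}}$.
\item[(iii)] Continuity in time of $\mathcal J(u)$ follows as in Lemma \ref{lm:issoglio}, together with \eqref{eq:schauder2} for $P_tf_T$. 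Hence $\mathcal J$ maps $C_T\shc^\gamma$ into itself.
\item[(iv)] Exactly as in step (ii) of the proof of Theorem \ref{thm:exunlinFP}, we get $\|\mathcal J(u_1)-\mathcal J(u_2)\|^{(\rho)}_{C_T\shc^\gamma}\le c\rho^{\theta-1}\Gamma(1-\theta)\|B\|\,\|u_1-u_2\|^{(\rho)}_{C_T\shc^\gamma}$. Choosing $\rho$ large makes $\mathcal J$ a contraction, and Banach's theorem gives a unique fixed point in $C_T\shc^\gamma\subset C_T\shc^{(1+\beta)+}$.
\end{itemize}

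\textbf{Main obstacle.} The delicate step is uniqueness in the whole inductive space $C_T\shc^{(1+\beta)+}$, not just in $C_T\shc^{\gamma}$ for the chosen $\gamma$. Take any solution $\tilde f\in C_T\shc^{\gamma'}$ with $\gamma'>1+\beta$, and set $\gamma''=\min(\gamma,\gamma')$. Running the same contraction in $C_T\shc^{\gamma''}$ is legitimate, since $\gamma''-1>\beta>\alpha$ and $\gamma''<2-\alpha$. The contraction in $C_T\shc^{\gamma''}$ then identifies $\tilde f$ with our fixed point, so the two coincide. The remaining care is only in checking that the index constraints for the product estimate and for the integrable singularity $\theta<1$ hold simultaneously, and they do because $\beta<1/2$.
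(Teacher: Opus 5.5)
Your proof is correct, but it takes a different route from the paper, which gives no argument of its own for Proposition \ref{prop:EU}. The paper simply invokes Theorem 4.7 (ii) and Remark 4.8 of \cite{issoglio_russoPDEa} for well-posedness, and Proposition 4.5 there for the equivalence of mild and weak solutions. You keep only that last ingredient as a black box and prove well-posedness of the mild formulation yourself. You do this by time reversal and a Banach fixed point in $C_T\shc^\gamma$ with the weighted norms of Definition \ref{def:rhonorm}, i.e.\ the scheme of Theorem \ref{thm:exunlinFP} with all indices shifted up by one. Bernstein is now applied to $u$ itself rather than to the flux $vg$, the product estimate \eqref{eq:bony} requires $\gamma-1>\alpha$, and integrability of the Schauder singularity requires $\gamma+\alpha<2$.

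The citation is shorter and inherits the full strength of the cited theorem. Your version is self-contained modulo Lemmas \ref{lm:schauder}, \ref{lm:bern} and \eqref{eq:bony}, and it shows exactly where $\beta<1/2$ enters: it guarantees that $(1+\beta,\min(\nu,2-\alpha))$ is nonempty. It also gives the extra information that the solution lies in $C_T\shc^{\gamma}$ for every $\gamma<\min(\nu,2-\alpha)$. Your $\gamma''=\min(\gamma,\gamma')$ step correctly upgrades uniqueness from a single $C_T\shc^\gamma$ to the whole inductive space $C_T\shc^{(1+\beta)+}$.

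Two small points of bookkeeping. First, take $\alpha\in(0,\beta)$, which is always possible by enlarging $\alpha$, so that \eqref{eq:bony} applies as stated. Second, the time continuity in your step (iii) is not literally Lemma \ref{lm:issoglio}, which only reaches $\shc^{(1-\beta)-}$. It is the analogue one index higher, which follows from the same Schauder computation because $\gamma<2-\alpha$ strictly.
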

To define the domain of the rough martingale problem, we make use of spaces of functions defined as the closures of compactly supported functions, ensuring that they are separable Banach spaces.
As defined in Section 2 of \cite{issoglio_russoMPb}, for any $\gamma \in \mathbb{R}$, we denote by $\bar{\mathcal{C}}_c^\gamma = \bar{\mathcal{C}}_c^\gamma (\mathbb{R}^d)$ the closure of the space of compactly supported functions $\mathcal{C}_c^\gamma$ in $\mathcal{C}^\gamma$, defined explicitly as

\[\bar{\mathcal{C}}_c^\gamma := \{f \in \mathcal{C}^\gamma : \exists (f_n) \subset \mathcal{C}_c^\gamma \text{ such that } f_n \to f \text{ in } \mathcal{C}^\gamma\}.
\]
Analogously to \eqref{eq:indBesov},
we denote the corresponding inductive spaces  
\[
  \bar {\mathcal C}_c^{\gamma+}:= \bigcup_{\alpha >\gamma} \bar{\mathcal C}_c^{\alpha} .
\]

 \begin{definition}\label{def:domains}
We define the domain 
   \begin{equation*}
\mathcal D_{\mathcal L} : =  \{ f \in C_T\mathcal C^{(1+\beta)+}:   \exists g \in  C_T\bar {\mathcal C}_c^{0+}  \text{ such that $f$ is a weak solution of \eqref{eq:PDE} with } f_T \in \bar{\mathcal C}_c^{(1+\beta)+  }\}.
\end{equation*}
\end{definition}
We can now give the definition of rough martingale problem.
    \begin{definition}\label{def:rMP}
     Let $\Omega_T^d$ be the canonical space  $C([0,T];\R^d)$ equipped with
     its Borel $\sigma$-field $\shf_T$ and $Z$ be the canonical process. A probability measure $\P\in\shp(\Omega_T)$ is a solution to the rough martingale problem with distributional drift $B$ and initial condition
     $\nu$ 
      if and only if for every $f\in\shd_\shl$ 
    $$f(t,Z_t)-f(0,Z_0)-\int_0^t\mathcal{L}f(s,Z_s)ds$$
    is a local martingale under $\P$, and $\P\circ Z_0^{-1}=\nu$, i.e. $\mathcal{L}aw(Z_0)=\nu$. 
We will also say that $\P$ is a solution to rMP$(B,\nu)$.
  \end{definition}

 \begin{remark} \label{rmk:SVMP}
     Lemma 4.1 of \cite{issoglio_russoMPb}
    states the following.
If $B \in C_T  \shc^{0+}$ then $\P$ is a solution to rMP$(B,\nu)$
if and only if $\P$ is a solution to the Stroock-Varadhan martingale problem.
    \end{remark}
    
We proceed by recalling Theorem 4.5 in \cite{issoglio_russoMPb}, which states  the following.
\begin{proposition} \label{prop:MPlin} 
Let $B \in C_T \shc^{(-\beta)+}$
  and let $\nu$ be  a probability measure.
  Then the rough martingale problem rMP$(B,\nu)$  admits existence and uniqueness in the sense of Definition \ref{def:rMP}.
\end{proposition}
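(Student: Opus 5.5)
We would follow the route of \cite{issoglio_russoMPb}: reduce the rough martingale problem with distributional drift $B$ to a classical martingale problem for a regular SDE via a Zvonkin-type transformation built from the singular Kolmogorov PDE of Proposition \ref{prop:EU}. Concretely, for a large parameter $\lambda>0$ we consider, componentwise, the PDE
\begin{equation*}
\partial_t \Phi+\tfrac12\Delta\Phi+\nabla\Phi\, B-\lambda(\Phi-\mathrm{id})=0\ \text{(or an equivalent formulation for } u=\Phi-\mathrm{id}\text{)},\qquad \Phi(T,x)=x,
\end{equation*}
i.e. $u$ solves $\mathcal L u=\lambda u - B$ with terminal value $0$. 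This fits the framework of Proposition \ref{prop:EU} (with a zero-order term, handled by the same mild fixed point), giving $u\in C_T\shc^{(1+\beta)+}$. Using the smoothing of the semigroup, with $\rho$-norms as in Definition \ref{def:rhonorm}, we show $\sup_t\|\nabla u(t)\|_\infty\le 1/2$ for $\lambda$ large, so that $x\mapsto \Phi(t,x)=x+u(t,x)$ is a $C^1$-diffeomorphism with inverse $\Psi(t,\cdot)$ uniformly Lipschitz.

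For \emph{existence}, we mollify $B_n=P_{1/n}B$ as in Lemma \ref{lm:Xncont}, solve the classical SDE with drift $B_n$, and use Remark \ref{rmk:SVMP} to identify its law as a solution of rMP$(B_n,\nu)$. The transformed process $Y^n=\Phi_n(t,X^n_t)$ solves an SDE with coefficients $\lambda u_n(t,\Psi_n(t,y))$ and diffusion $(I+\nabla u_n)(t,\Psi_n(t,y))$, which are bounded, Lipschitz/Hölder and uniformly nondegenerate, uniformly in $n$. Continuity of the PDE solution map in $B$ (the analogue of Theorem \ref{thm:exunlinFPCont} for the Kolmogorov PDE) gives $u_n\to u$ in $C_T\shc^{1+}$. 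Tightness of $(\mathrm{Law}(X^n))$ then follows from Kolmogorov's criterion on $Y^n$ and Lipschitzness of $\Psi_n$. For any limit point $\P$ and $f\in\shd_\shl$ with $\shl f=g$, we approximate by $f_n$ solving $\shl_n f_n=g$ with the same terminal datum; since $f_n(t,Z_t)-f_n(0,Z_0)-\int_0^t g(s,Z_s)ds$ are martingales under $\mathrm{Law}(X^n)$ and $f_n\to f$ uniformly, passing to the limit shows that $\P$ solves rMP$(B,\nu)$.

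For \emph{uniqueness}, let $\P$ be any solution. We apply the martingale property to the components of $\Phi$ (after localisation, noting that $\Phi$ lies in the domain up to the compact-support closure issue, handled by cutoff and density as in the definition of $\bar{\shc}^{\gamma}_c$). This shows that $Y_t=\Phi(t,Z_t)$ solves, in law, the regular SDE above with initial law $\nu\circ\Phi(0,\cdot)^{-1}$. That SDE has bounded continuous nondegenerate coefficients with Lipschitz diffusion, so Stroock--Varadhan uniqueness applies, and $\P$ is determined since $Z_t=\Psi(t,Y_t)$.

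The main obstacle is this last identification: checking that $\Phi$ and $\Phi\otimes\Phi$ (needed for the quadratic variation) are admissible test functions, i.e. belong to $\shd_\shl$ or can be approximated within it with convergence of the associated $g$'s in $C_T\bar{\shc}_c^{0+}$. Our first attempt will be to multiply by smooth cutoffs $\chi_R$, compute $\shl(\chi_R\Phi)$ explicitly using the pointwise product rules \eqref{eq:bony} and Remark \ref{rmk:associativity}, and verify that it lies in $C_T\bar{\shc}_c^{0+}$, letting $R\to\infty$ with stopping times.
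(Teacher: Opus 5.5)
The paper does not prove this statement. It imports it verbatim from Theorem 4.5 of \cite{issoglio_russoMPb}. Your overall plan is the Zvonkin-type route behind that reference: the map $\Phi=\mathrm{id}+u$ with $\shl u=\lambda u-B$, $u(T)=0$, smallness of $\nabla u$ for large $\lambda$, then transfer to a regular SDE for $Y_t=\Phi(t,Z_t)$. Your existence half (mollification, tightness, $\shl_n f_n=g$) is fine as sketched.

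The gap is in the step you single out yourself, and the repair you propose fails, because $\chi_R\Phi_i\notin\shd_\shl$. By the Leibniz rule,
\[
\shl(\chi_R\Phi_i)=\chi_R\,\lambda u_i+\tfrac12\,\Phi_i\,\Delta\chi_R+\nabla\chi_R\cdot\nabla\Phi_i+\Phi_i\,(\nabla\chi_R\cdot B).
\]
The last term is a genuine distribution in $C_T\shc^{-\alpha}$ supported on the annulus where $\nabla\chi_R\neq0$; it is not a function. Hence $\shl(\chi_R\Phi_i)\notin C_T\bar\shc_c^{0+}$. A cutoff in the $x$-variable produces a commutator that hits $B$ directly. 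Stopping at exit times cannot rescue this, since the rough martingale problem only gives information for $f\in\shd_\shl$, which is a condition on $\shl f$ on all of $\R^d$. Approximating $\Phi_i$ or $\Phi_i\Phi_j$ in norm within $\shd_\shl$ is not available either: $\Phi_i(T)=x_i$ is unbounded, and $\lambda u_i$ need not vanish at infinity (for a constant $B\neq0$, $u$ is a non-zero constant in $x$).

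The fix is to cut off in the transformed variable. For $h\in C_c^\infty(\R^d)$ set $f(t,x):=h(\Phi(t,x))$. Since $\Phi(t,\cdot)-\mathrm{id}=u(t,\cdot)$ is bounded uniformly in $t$, $f$ has compact support, and $f(T)=h$. By the chain rule, the singular terms enter only through $\shl\Phi_k=\lambda u_k$:
\[
\shl f(t,x)=\lambda u(t,x)\cdot\nabla h(\Phi(t,x))+\tfrac12{\rm Tr}\big(D^2h(\Phi(t,x))\,(\nabla\Phi\nabla\Phi^{\top})(t,x)\big).
\]
This is a compactly supported element of $C_T\shc^{0+}$, hence of $C_T\bar\shc_c^{0+}$. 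Justify the identity for $B_n=P_{1/n}B$ and pass to the limit using continuity of the Kolmogorov PDE in its drift, exactly as in your existence part. So $f\in\shd_\shl$.

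Consequently, for any solution $\P$ of rMP$(B,\nu)$, the process $h(Y_t)-h(Y_0)-\int_0^t\tilde L_sh(Y_s)\,ds$ is a bounded local martingale, hence a martingale. Here
\[
\tilde L_sh(y)=\lambda u(s,\Psi(s,y))\cdot\nabla h(y)+\tfrac12{\rm Tr}\big(D^2h(y)\,a(s,y)\big),\qquad a(s,y)=(\nabla\Phi\nabla\Phi^\top)(s,\Psi(s,y)).
\]
This is directly the Stroock--Varadhan martingale problem for $Y$ with initial law $\nu\circ\Phi(0,\cdot)^{-1}$. Choosing $h(y)=\chi(y/R)\,y_i$ or $\chi(y/R)\,y_iy_j$ recovers your localisation of $\Phi$ and $\Phi\otimes\Phi$, but that step is not needed.

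One correction: $a$ is only H\"older continuous in $y$ (since $\nabla u\in C_T\shc^{\beta+}$), not Lipschitz as you state. Being bounded, continuous and uniformly elliptic (from $\|\nabla u\|_\infty\le\frac12$) is exactly what Stroock--Varadhan uniqueness requires. So $\mathrm{Law}(Y)$, and hence $\P$ via $Z_t=\Psi(t,Y_t)$, is still determined.
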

At this point we state and prove the probabilistic representation  for a solution
to the linear  Fokker-Planck PDE \eqref{eq:FPlin} with $g = B$ and initial condition $v_0$ with unitary mass, namely for 
\begin{equation}\label{eq:FPlimBis}
    \left\{
     \begin{array}{l}
     \partial_t v=\frac12\Delta v-{\rm div}(vB),\\
      v(0)=v_0.
     \end{array}
      \right.
\end{equation}
Well-posedness for this PDE is  provided by Theorem \ref{thm:exunlinFP}
  with $g = B$.

Below we show that given a solution $v$ to the Fokker-Planck PDE \eqref{eq:FPlimBis}, we can find a probability measure  on  the canonical space $\Omega^d_T$ (or equivalently a stochastic process on the canonical space) that admits  $v(t), t \in [0,T],$ as the density of its time marginals. 

\begin{proposition}\label{prop:SDEtoPDE}
  Let $v_0$ be a probability density which belongs to $ \shc^{\beta +}$ and $B\in C_T \shc^{(-\be)+}$.
Let $\P$ be the solution of rMP($B,v_0(x)dx)$ given by Proposition  \ref{prop:MPlin}.
Then, for all $t\in [0,T]$, the time marginals of $\P$ admit a density $v(t)$.   Furthermore, $v$ is the  solution to the linear Fokker-Planck PDE \eqref{eq:FPlimBis}, provided by Theorem \ref{thm:exunlinFP}, with $g = B$.
\end{proposition}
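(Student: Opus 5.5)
We will argue by approximation, using the known link between martingale problems with smooth drift and linear Fokker--Planck equations, and pass to the limit with the continuity results already established.

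\emph{Approximation.} Let $B_n := P_{1/n}B$. By Lemma \ref{lm:Xncont}, $B_n \in C_T C_b^2$ and $B_n\to B$ in $C_T\shc^{(-\be)+}$. Let $\P_n$ be the solution of rMP$(B_n,v_0(x)dx)$. Since $B_n\in C_T\shc^{0+}$, Remark \ref{rmk:SVMP} shows that $\P_n$ solves the classical Stroock--Varadhan martingale problem. Hence $\P_n$ is the law of the strong solution $X^n$ of $dX^n_t=B_n(t,X^n_t)dt+dW_t$ with $X^n_0\sim v_0$.

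\emph{Identification at level $n$.} For $\varphi\in\shs$, It\^o's formula gives
\[
\frac{d}{dt}\E[\varphi(X^n_t)] = \E\Big[\tfrac12\Delta\varphi(X^n_t)+\nabla\varphi(X^n_t)\cdot B_n(t,X^n_t)\Big].
\]
So the marginal laws $\mu^n_t$ form a measure-valued weak solution of \eqref{eq:FPlimBis} with drift $B_n$. Let $v_n$ be the solution of \eqref{eq:FPlimBis} with $g=B_n$ given by Theorem \ref{thm:exunlinFP}. By Lemma \ref{lm:vinL1}, after also mollifying $v_0$ if needed, $v_n$ is a non-negative $C^{1,2}$ probability density. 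To show $\mu^n_t=v_n(t,x)dx$ we use uniqueness of measure-valued solutions of the linear Fokker--Planck equation with bounded smooth coefficients. This follows by duality: the backward Kolmogorov equation $\partial_t u+\frac12\Delta u+B_n\cdot\nabla u=0$, $u(t)=\varphi$, has a classical bounded solution, and pairing it with $\mu^n$ yields $\langle\mu^n_t,\varphi\rangle=\langle v_0,u(0)\rangle$. The same identity holds for $v_n$. Alternatively we can invoke Theorem 2.6 of \cite{figalli} or the superposition-type uniqueness for bounded coefficients.

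\emph{Passage to the limit.} By Theorem \ref{thm:exunlinFPCont}, $v_n\to v$ in $C_T\shc^\be$, hence uniformly on $[0,T]\times\R^d$. By Proposition \ref{lin_sing_PDE}, each $v_n(t)$ and $v(t)$ is non-negative with $L^1$-norm at most $1$. To conclude we need $\P_n\to\P$ weakly, or at least convergence of the one-dimensional marginals. Here we rely on the stability result accompanying Proposition \ref{prop:MPlin} in \cite{issoglio_russoMPb}: the solution of rMP depends continuously in law on $B\in C_T\shc^{(-\be)+}$. If this is not available directly, we would prove tightness of $(\P_n)$ via the Zvonkin-type representation $X=X_0+\tilde u(t,X_t)-\tilde u(0,X_0)+\dots$, using uniform $\shc^{1+\be+}$ bounds on solutions of \eqref{eq:PDE} with $g=B_n$. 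Any limit point then solves rMP$(B,\nu)$, by convergence of the domains $\shd_\shl$ (Proposition \ref{prop:EU} with continuous dependence on $B$), and uniqueness identifies it as $\P$. For $\varphi\in C_c^\infty$ we then obtain
\[
\E^{\P}[\varphi(Z_t)]=\lim_n\int\varphi\,v_n(t)\,dx=\int\varphi\,v(t)\,dx,
\]
by uniform convergence on the compact support. So the marginal of $\P$ at time $t$ equals $v(t,x)dx$, and $v(t)$ has mass $1$, which also recovers mass conservation.

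The main obstacle is this last step: the stability in law of the rough martingale problem under $B_n\to B$. All the other steps are either classical smooth-coefficient arguments or direct applications of Theorems \ref{thm:exunlinFP} and \ref{thm:exunlinFPCont}.
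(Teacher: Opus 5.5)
Your proposal is correct and follows the paper's proof. You approximate $B$ by $B_n=p_{1/n}\ast B$ and identify the marginals of the Stroock--Varadhan solution $\P_n$ with $v_n$. You then obtain $v_n\to v$ in $C_T\shc^\beta$ from Theorem \ref{thm:exunlinFPCont} and $\P_n\to\P$ weakly from Theorem 4.3 of \cite{issoglio_russoMPb}, and pass to the limit on test functions. The only variation is that you identify the level-$n$ marginals by duality with the backward Kolmogorov equation, where the paper uses the superposition principle (Theorem 2.6 of \cite{figalli}) followed by uniqueness of the martingale problem. For your route you only need $v_n\in B_TL^1$ from Proposition \ref{lin_sing_PDE} with initial datum $v_0$ itself, so the suggested mollification of $v_0$ through Lemma \ref{lm:vinL1} is unnecessary (it would also force you to change the initial law of $\P_n$).
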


\begin{proof} 
 Let $v  \in C_T \shc^{\be}$ be the solution of the Fokker-Planck PDE 
 \eqref{eq:FPlimBis}, see Theorem \ref{thm:exunlinFP}
 with $g = B$.
 Let $B^n$ so that $B^n(t,\cdot)= p_{\frac{1}{n}} \ast B(t,\cdot)$
  and  $v_n$ be the solution of  \eqref{eq:FPlin} with 
  $g = B^n$.
From Lemma \ref{lm:Xncont} with $g=B,g^n=B^n$ we have that $B^n$ converges to $B$ in $C_T \shc^{(-\beta)+}$, so Theorem \ref{thm:exunlinFPCont} guarantees that
 $v^n$ converges to $v$ in $C_T \shc^{\be}$.
  By Proposition \ref{lin_sing_PDE}
  $v^{n}$ is non-negative and $v^{n}\in L^1(\R^d)$, for every $n\in\N$.
  Thanks to Theorem 2.6 in \cite{figalli}, there exists a solution $\mathbb P^{n}$ to the Stroock-Varadhan martingale problem with
drift $B^n$ and initial condition $\nu(dx) = v_0(x)dx$
whose density at time $t$ is exactly $v^{n}(t)$. 
Owing to Remark \ref{rmk:SVMP}
the measure $\P^{n}$ coincides with the unique solution to rMP$(B^n, v_0(x)dx)$.
Since $B^n$ converges to $B$ in $C_T \shc^{(-\beta)+}$, it also converges in $C_T\shc^{-\be}$.
Therefore, by Theorem 4.3
in \cite{issoglio_russoMPb}
$\P^{n}$ converges weakly to $\P$.
Consequently 
$v^{n}(t,x)d x$ converges weakly to the time marginal  of $\P$, for any $t\in [0,T]$.
This implies that $v(t)$ is the density of the time marginal   $\P_{Z_t}$, which proves the result.
\end{proof} 

\begin{corollary} \label{cor:mass-con} Suppose $B \in C_T \shc^{-\beta}$ and $v_0 \in \shc^{\beta+}$.
  Let $v$ be the solution  to the Fokker-Planck PDE  \eqref{eq:FPlimBis}
  such that $v_0$ is non-negative $\|v_0\|_{L^1}=1$.
    Then $v$ preserves non-negativity and conserves the mass in the sense that
  $\|v(t)\|_{L^1}=1$ for each $t\in[0,T]$.

\end{corollary}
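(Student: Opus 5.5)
The plan is to read off the statement from the probabilistic representation in Proposition \ref{prop:SDEtoPDE}. There is a small gap first: Corollary \ref{cor:mass-con} assumes $B \in C_T\shc^{-\beta}$, whereas Proposition \ref{prop:SDEtoPDE} (and also Theorem \ref{thm:exunlinFP}, Proposition \ref{prop:MPlin}) are stated for $B \in C_T\shc^{(-\beta)+}$. In the application (Theorem \ref{thm:lin-lim} with $B = g_v$), Notation \ref{not:gw} already gives $g_v \in C_T\shc^{-\alpha}$ with $\alpha<\beta$, hence $B\in C_T\shc^{(-\beta)+}$. So I will work under this inductive assumption, which is the one under which the solution $v$ of \eqref{eq:FPlimBis} exists and is unique; I would note that this is implicitly meant. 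If one insisted on the exact index $-\beta$, I would instead choose $\beta'\in(\beta,\frac12)$ with $v_0\in\shc^{\beta'+}$ and rerun the theory with $\beta'$ in place of $\beta$, since $\shc^{-\beta}\subset \shc^{(-\beta')+}$; that only requires checking that Assumption \ref{ass:beta} still holds with $\beta'<\frac12$.

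\textbf{Main step.} Since $v_0\ge 0$ and $\|v_0\|_{L^1}=1$, the function $v_0$ is a probability density in $\shc^{\beta+}$. Proposition \ref{prop:MPlin} then yields a unique solution $\P$ of rMP$(B, v_0(x)dx)$. By Proposition \ref{prop:SDEtoPDE}, for every $t\in[0,T]$ the marginal $\P\circ Z_t^{-1}$ has a density, and this density is exactly the unique solution $v(t)$ of \eqref{eq:FPlimBis} from Theorem \ref{thm:exunlinFP}. A probability density is a.e. non-negative, and since $v(t)\in\shc^\beta$ is continuous, it is non-negative everywhere. It also integrates to one, so $\|v(t)\|_{L^1}=\int v(t,x)\,dx = 1$.

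\textbf{Alternative route.} As a cross-check that avoids the martingale problem, one can take the approximation in the proof of Proposition \ref{lin_sing_PDE}: $v_n\to v$ in $C_T\shc^\beta$, with $v_n\ge0$ and $\|v_n(t)\|_{L^1}=1$ by Lemma \ref{lm:vinL1}. Fatou's lemma only gives $\|v(t)\|_{L^1}\le 1$, because mass could escape to infinity. Ruling this out would need a tightness argument on $v_n(t,x)dx$, and that is precisely what the weak convergence $\P^n\to\P$ in Proposition \ref{prop:SDEtoPDE} provides.

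\textbf{Expected obstacle.} The hardest point is therefore that the limit density has full mass and not mass $\le1$. This is handled by using the identification in Proposition \ref{prop:SDEtoPDE}, where $v(t)$ is the density of a probability law, rather than the Fatou argument alone. Given that proposition, the corollary reduces to the two observations above.
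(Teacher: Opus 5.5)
Your proposal is correct and is essentially the paper's proof. Take $\P$ from Proposition \ref{prop:MPlin}, use Proposition \ref{prop:SDEtoPDE} to identify its marginal densities with the solution of \eqref{eq:FPlimBis} (by uniqueness from Theorem \ref{thm:exunlinFP}), so that $v(t)$ is a probability density; your remark that the hypothesis should really be read as $B \in C_T\shc^{(-\beta)+}$ (which is what holds in the only application, $B=g_v$ in Theorem \ref{thm:lin-lim}) is a fair point that the paper's proof passes over silently.
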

\begin{proof}
  Let $\P$ be provided by Proposition \ref{prop:MPlin}.
  By Proposition \ref{prop:SDEtoPDE}, the marginals
  admit a density, which is given by  $w(t,\cdot),  t \in [0,T]$, where
  $w$ is the solution to the PDE  \eqref{eq:FPlimBis}.
  In particular $w$ is non-negative and conserves the mass.
The uniqueness property of the PDE \eqref{eq:FPlimBis}
  allows to conclude.
    \end{proof}

\begin{remark} \label{rmk:mass-con}
The statement of previous corollary  is stronger than the result obtained in Proposition \ref{lin_sing_PDE}, because even assuming $\|v_0\|_{L^1}=1$ therein, 
we could only conclude that $\|v(t)\|_{L^1}\leq 1$ for each $t\in[0,T]$.
\end{remark}

\subsection{Well-posedness of the singular McKean SDE}\label{ssc:MK}

We consider here the McKean SDE formally given by   \eqref{eq:MKSDE}, for which a mathematical meaning is given  via a suitable rough martingale problem, as detailed below.  
\begin{definition}\label{def:MKrMP}
  A solution to  the McKean  rough martingale problem
  with  drift $ F(K*\,\cdot\,)b$ and initial condition $v_0$,   is  a Borel  probability measure $\P$ on  $\Omega^d_T$,
  such that the following hold.
\begin{itemize}
\item[-] The time marginals of $\P$ admit a density for each time $t\in[0,T]$, denoted by $v(t)$;
\item[-] $v: [0,T ]\times\R^d  \to \R$  is an element of $C_T \mathcal C^{\be}$;
\item[-] $\P$ solves the rough martingale problem rMP$(B,\nu)$ according to Definition \ref{def:rMP} with
 $B:=F(K*v)b$ and $\nu(dx) = v_0(x) dx$. 
\end{itemize}
We denote it by McKean--rMP$(F(K*\, \cdot \,)b,v_0)$ for shortness\footnote{The notation $F(K*\, \cdot \,)b$ is due to the fact that the density  $v$ is unknown, and thus it is not part of the given coefficients.}.
\end{definition}

\begin{theorem}\label{thm:WPMKSDE}
  Let Assumptions \ref{ass:beta}, \ref{ass:K} and \ref{ass:F} hold.
    Let $v_0 \in \shc^{\beta +}$ non-negative function such that $\Vert v_0 \Vert_{L^1} = 1$. 
    Then there exists a unique solution $\P$ to the McKean-rMP($F(K*\,\cdot\,)b,v_0$).
\end{theorem}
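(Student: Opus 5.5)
The proof splits into existence and uniqueness. Both rest on the well-posedness of the non-linear Fokker--Planck PDE \eqref{eq:FPlim} (Theorem \ref{thm:WEFPPDE} and Proposition \ref{pr:uniFPlim}) and on the link between rough martingale problems and linear Fokker--Planck equations (Propositions \ref{prop:MPlin} and \ref{prop:SDEtoPDE}).

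\emph{Existence.} First, let $v \in C_T\shc^\beta \cap S_{L^1}$ be the unique solution of \eqref{eq:FPlim} given by Theorem \ref{thm:WEFPPDE}; this applies since $\Vert v_0\Vert_{L^1}=1$. Set $B := g_v = F(K*v)b$. By Notation \ref{not:gw}, $B \in C_T\shc^{-\alpha} \subset C_T\shc^{(-\beta)+}$. Proposition \ref{prop:MPlin} then provides a unique solution $\P$ of rMP$(B, v_0(x)dx)$, since $v_0$ is a probability density. By Proposition \ref{prop:SDEtoPDE}, the marginals of $\P$ admit densities $u(t)$, and $u$ is the unique solution of the linear equation \eqref{eq:FPlimBis} with drift $B=g_v$. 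On the other hand, Theorem \ref{thm:lin-lim}, item 1, shows that $v$ also solves \eqref{eq:FPlin} with $g=g_v$. Uniqueness in Theorem \ref{thm:exunlinFP} gives $u=v$. Hence the marginal densities of $\P$ are $v \in C_T\shc^\beta$, and $\P$ solves rMP$(F(K*v)b, v_0)$. All three requirements of Definition \ref{def:MKrMP} are therefore met.

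\emph{Uniqueness.} Let $\P'$ be another solution, with marginal densities $v' \in C_T\shc^\beta$, and set $B' = F(K*v')b \in C_T\shc^{(-\beta)+}$. Since $\P'$ solves rMP$(B', v_0 dx)$, uniqueness in Proposition \ref{prop:MPlin} identifies it with the measure in Proposition \ref{prop:SDEtoPDE}. So $v'$ solves the linear PDE \eqref{eq:FPlin} with $g = g_{v'}$, that is,
\[
v'(t) = P_tv_0 - \int_0^t P_{t-s}{\rm div}\big(v'(s) \cdot (F(K*v'(s))b(s))\big)ds .
\]
Using the associativity property of Remark \ref{rmk:associativity}, we rewrite $v'(s)(F(K*v'(s))b(s)) = (v'(s)F(K*v'(s)))b(s)$. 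Thus $v'$ is a mild solution of the non-linear equation \eqref{eq:FPlim}, and by Lemma \ref{lm:W-M} also a weak one. Proposition \ref{pr:uniFPlim} then gives $v' = v$. Consequently $B' = B$, and uniqueness for rMP$(B, v_0dx)$ in Proposition \ref{prop:MPlin} yields $\P' = \P$.

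The main obstacle is this identification step in the uniqueness part: showing that an arbitrary McKean solution's density, which a priori is only known to lie in $C_T\shc^\beta$, actually solves the non-linear PDE. It requires checking that Proposition \ref{prop:SDEtoPDE} can be applied with the drift built from the unknown $v'$. This works because $v' \in C_T\shc^\beta$ makes $B'$ an admissible element of $C_T\shc^{(-\beta)+}$ via Lemma \ref{lm:prep} and \eqref{eq:bonyt}. Combining this with the associativity of the pointwise product then turns the linear equation into the non-linear one. Note that Proposition \ref{pr:uniFPlim} needs no a priori $L^1$ bound on $v'$, since Corollary \ref{cor:lin_sing_PDE} supplies it internally. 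Positivity and unit mass of $v'$ also follow directly from its being a density.
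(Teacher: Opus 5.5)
Your proposal is correct and follows the paper's proof essentially step by step. Existence uses the solution $v$ of Theorem \ref{thm:WEFPPDE}, the frozen drift $B=F(K*v)b$, and Propositions \ref{prop:MPlin} and \ref{prop:SDEtoPDE}; uniqueness uses Proposition \ref{prop:SDEtoPDE}, associativity (Remark \ref{rmk:associativity}) to pass from the linear to the non-linear PDE, uniqueness for \eqref{eq:FPlim}, and finally uniqueness for rMP$(B,v_0\,dx)$. The only difference is that you spell out the identification of the marginal densities with $v$ via uniqueness for the linear equation (Theorem \ref{thm:exunlinFP}), which the paper leaves implicit.
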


\begin{proof}
	{\it (Existence).} 
	Let $v$ be the solution  of PDE \eqref{eq:FPlim} provided by Theorem \ref{thm:WEFPPDE} which belongs to $C_T \shc^{\beta}$.
By the associativity property stated in Remark \ref{rmk:associativity}
implies that $v$ is also a solution of \eqref{eq:FPlin}
with $g = g_v$, where $g_w$ was introduced in \eqref{eq:FPlin}
for any $w \in C_T\shc^{\beta}$.
        By Corollary \ref{lm:BNcont} with $g=1$ and $f=v$, we have $t \mapsto  F(K*v(t))   \in C_T \shc^\be$. 	From Assumption \ref{ass:beta} there is $\alpha <\beta$ such that $b \in C_T \shc^{-\al}$. Since $ \be-\al >0$, by
	\eqref{eq:bonyt}, $t \mapsto B(t):= F(K*v(t))b(t)$  belongs to $C_T \shc^{-\al}$, thus it belongs to $C_T \shc^{(-\beta)+}$.
	By Propositions \ref{prop:MPlin} and  \ref{prop:SDEtoPDE} there is a unique solution
        $\P$ of rMP($B,v_0(x)dx$) 
	such that  time marginal  $\P_{Z_t}$ has a  density which is precisely $v(t)$ for all $t\in[0,T]$.
	This is enough to prove existence.
	
	{\it (Uniqueness).} Let $\P^i$, for $i=1,2$, be two solutions of the McKean-rMP$(F(K*\,\cdot\,)b, v_0)$, with time marginals   $v^i \in C_T \shc^\be $.
	In particular they are solutions to rMP$(F(K*v^i)b, v_0(x)dx$).
	By Proposition \ref{prop:SDEtoPDE} for each $i=1,2$,  $v^i$ 
	satisfies the linear  Fokker-Planck PDE \eqref{eq:FPlimBis}, with $v=v^i$ and $B=F(K*v^i)b$. 
Again by Remark \ref{rmk:associativity},
for both $i=1,2$, $v^i$ is also a solution 
to  the non-linear PDE \eqref{eq:FPlim}. 
	Therefore by  uniqueness of the solution to PDE \eqref{eq:FPlim}, provided by Theorem \ref{thm:WEFPPDE},  
	we must have  $v^1=v^2$, which we denote by $v$.
	Consequently  $\P^i, i = 1,2$ are solutions to   rMP($ F(K*v)b, v_0(x)dx)$.
	By Proposition \ref{prop:MPlin} we finally conclude
	$\P^1 = \P^2$.
\end{proof}

{\bf Data availability statement.} The articles does not
contain data.

\medskip
  
{\bf Acknowledgements.} 
The author L.\ Bondi was partially supported by GNAMPA Progetti di Ricerca 2026 CUP: E53C25002010001.
The author E.\ Issoglio was partially supported by
PRIN-PNRR2022 (P20224TM7Z) CUP: D53D23018780001, from EU - Next Generation
EU - PRIN2022 (202277N5H9) CUP: D53D23005670006 and  GNAMPA Progetti di Ricerca 2026 CUP: E53C25002010001.
   The author F.\ Russo was partially supported by the  ANR-22-CE40-0015-01 (SDAIM).

\bibliographystyle{plain}
\bibliography{../BIBLIO_FILE/Biblio_Tesi_Luca}
\end{document}